\documentclass[reqno]{amsart}

\usepackage[alphabetic,nobysame]{amsrefs}

\usepackage[margin=1.45in,footskip=30pt]{geometry}

\usepackage[shortlabels]{enumitem}
\setlist[itemize]{nosep}
\usepackage{graphicx,pdfsync,amssymb, latexsym,amsfonts,amsbsy,color,subcaption,esint,mathtools,enumerate,nicefrac}

\usepackage{graphicx,pdfsync,amssymb, latexsym,amsfonts,amsbsy,color,subcaption,esint,mathtools,nicefrac,tikz}

\usetikzlibrary{decorations.pathreplacing}  

\usepackage{parskip}

\usepackage{hyperref}
\hypersetup{
colorlinks=true,
linkcolor=blue,
filecolor=blue,      
urlcolor=blue,
citecolor=blue,
}
\newtheorem{theorem}{Theorem}[section]
\newtheorem{lemma}[theorem]{Lemma}
\newtheorem{proposition}[theorem]{Proposition}

\newtheorem{corollary}[theorem]{Corollary}
\newtheorem{remark}[theorem]{Remark}
\newtheorem{conjecture}[theorem]{Conjecture}

\usepackage{times}
\DeclareMathOperator{\sech}{sech}
\let\div\relax
\DeclareMathOperator\div{div}

\DeclareMathOperator\supp{supp}

\title{Cascade mechanisms for Navier--Stokes blow-up}
\author [Alexey Cheskidov]{Alexey Cheskidov}

\address{Institute for Theoretical Sciences, Westlake University, China}
\email{cheskidov@westlake.edu.cn} 

\author [Mimi Dai]{Mimi Dai}

\address{Department of Mathematics, Statistics and Computer Science, University of Illinois at Chicago, Chicago, IL 60607, USA}
\email{mdai@uic.edu} 

\author [Stan Palasek]{Stan Palasek}

\address{Department of Mathematics, Princeton University, Princeton, NJ 08540, USA}
\email{spalasek@princeton.edu} 

\begin{document}

\begin{abstract}

In a recent preprint (arXiv:2511.09556), we exhibited an inverse energy cascade for the 3D Navier--Stokes equations which results in ``instantaneous'' Type I blow-up and failure of uniqueness in certain sharp regularity classes. Our purpose here is to elucidate this phenomenon further in the setting of the Obukhov dyadic model, and to compare it to the better-known phenomenon of finite-time blow-up. When intermittency is low ($\alpha\leq 2$), we recover our previous result from the Navier--Stokes setting; in the energy-supercritical case where intermittency is high ($\alpha>2$), we show that the same can occur in the class of finite energy solutions. We present two different proofs: a soft approach using a Lyapunov function and an explicit multiscale construction. For the inviscid system we illustrate that a similar phenomenon occurs at lower regularity. Finally, we state a finite-time blow-up theorem for a mixed Desnyansky--Novikov--Obukhov model, in which a forward cascade from finitely supported data and no force produces a singularity that is of Type~II only by a small margin, with features comparable to the recent forced Navier--Stokes blow-up of OpenAI.

\end{abstract}

\maketitle

\begingroup
\setlength{\parskip}{0pt}
\tableofcontents
\endgroup

\section{Introduction}

Consider the incompressible Navier--Stokes equations on $\mathbb T^d$ for $d\geq2$, given by
\begin{equation}\label{eq:NSE}
\begin{cases}
\partial_t u -  \nu \Delta u + \div( u \otimes u) + \nabla p = 0 &\\
\div u = 0.
\end{cases}
\end{equation}
When equipped with smooth initial data $u(\cdot,0)=u_0\in C^\infty$, the system \eqref{eq:NSE} is locally well-posed, meaning there exists a unique smooth solution on the time interval $[0,T)$ for some $T>0$. A significant open question when $d\geq3$ is whether the maximum lifespan $T$ can be finite, a scenario known as \emph{finite-time blow-up} which is characterized by certain norms of the solution becoming infinite. A very recent paper \cite{openai2026} by OpenAI claims the existence of a finite-time blow-up for the problem with a smooth force. In \cite{cheskidov2025instantaneous}, the authors constructed solutions with smooth initial data that exhibit \emph{instantaneous blow-up}; that is,
\begin{itemize}[left=1em]
\item $u$ is a classical solution of \eqref{eq:NSE} on $\mathbb T^d\times([0,T]\setminus \{T_*\})$,
\item $u(\cdot,T_*)$ is spatially smooth and remains a genuine weak solution at $t=T_*$, and
\item the solution blows up from the right in the sense that
\begin{align*}
    \lim_{t \to T_*+} \|u(t)\|_{L^\infty}=\infty.
\end{align*}
\end{itemize}

The main purpose of this work is to study, in the simplified setting of dyadic models, the energy cascades behind both kinds of singularity: the inverse cascade that produces instantaneous blow-up, which occupies most of the paper, and the forward cascade that produces finite-time blow-up, which we take up in Section~\ref{sec:mixed}. We begin with the viscous Obukhov model, which can be expressed as
\begin{equation} \label{Obukhov}
X_k'= - \nu N_k^2X_k + N_k^\alpha X_{k+1}^2 - N_{k-1}^\alpha X_{k-1} X_k, \qquad k=1,2, \dots; \qquad X_0=0
\end{equation}
which is equivalent to the classical Obukhov model, with $\{X_k\}_{k\geq 1}$ replaced by $\{-X_k\}_{k\geq 1}$ for convenience. Throughout, $\nu>0$ is fixed. When $N_k=\lambda^k$, we use the convention $N_0=1$; its value is immaterial in \eqref{Obukhov} because $X_0=0$. The parameter $\alpha$ encodes the intermittency of the flow being modeled.
Suppose that the part of a velocity field at frequency $N$ is concentrated
on a set of dimension $D_{in}\in[0,d]$, so that it occupies a volume
fraction $N^{-(d-D_{in})}$ of $\mathbb T^d$.  If $X_N:=\|P_Nu\|_{L^2}$, the
amplitude of $P_Nu$ on its support is $\|P_Nu\|_{L^\infty}\sim N^{(d-D_{in})/2}X_N$,
and the rate $N\|P_Nu\|_{L^\infty}$ at which the nonlinearity acts becomes
$N^{\alpha}X_N$ with
\begin{equation}\label{eq:alpha}
\alpha=\frac{d+2-D_{in}}{2}.
\end{equation}
Thus $\alpha$ ranges over $[1,(d+2)/2]$, with $\alpha=1$ corresponding to no
intermittency ($D_{in}=d$) and $\alpha=(d+2)/2$ to the most intermittent
scenario ($D_{in}=0$); larger values of $\alpha$ correspond to more
intermittency and, potentially, to more pathological behavior.  The
dividing line $\alpha=2$ of our results corresponds to $D_{in}=d-2$. The
energy-supercritical range $\alpha>2$ requires $D_{in}<d-2$, and is therefore
empty when $d=2$, in agreement with the well-posedness of the
two-dimensional equations in $L^2$.  Under this correspondence, $\|u\|_{L^\infty}$ scales like $\|X\|_{H^{\alpha-1}}$ and the critical Navier--Stokes space
$B^{-1}_{\infty,\infty}$ scales like the weighted supremum
$\|X\|_{B_{2,\infty}^{\alpha-2}}\coloneqq\sup_kN_k^{\alpha-2}|X_k|$. We use the dyadic Sobolev norm
\[
\|X\|_{H^s}^2
:=
\sum_{k\geq1}N_k^{2s}X_k^2,
\qquad s\in\mathbb R
\]
for which the scaling-critical Sobolev exponent of
\eqref{Obukhov} is $s_c=\alpha-2$. Finally, we write $H^0=\ell^2$.

Models such as the Obukhov model \eqref{Obukhov} are known to share certain common behaviors with the fluid PDEs that they model and, in certain special cases, ideas can be transferred from the dyadic setting to the full PDE setting. For example, non-uniqueness of Leray--Hopf solutions of \eqref{Obukhov} from critical initial data was established by the third author~\cite{palasek2024non}, which motivated the non-uniqueness result for the Navier--Stokes equations in~\cite{MR5008166}. Global regularity of the inviscid Obukhov model from data in $H^s$ with
$s>\alpha$ was proved by Kiselev--Zlato\v s~\cite{MR2180809},
who left open the endpoint $s=\alpha$ and suggested that the result should
persist in the presence of viscosity.  For the viscous model the picture
depends on $\alpha$: when $\alpha<2$ the critical exponent $s_c=\alpha-2$ is
negative and the model is globally well-posed in $\ell^2$, so that the non-uniqueness of
Theorem~\ref{th:alpha<2} necessarily involves infinite energy; when
$\alpha>2$ the energy space is supercritical and finite-time blow-up is
expected, and was established with a smooth force and super-exponentially
growing frequencies in~\cite{palasek2026finite}. Theorem~\ref{th:DNO} below obtains finite-time blow-up with geometric frequencies and no force, once a small Desnyansky--Novikov component is added to the nonlinearity.

A close cousin of \eqref{Obukhov} is the Desnyansky--Novikov (DN) dyadic model, whose modern study was initiated by Katz--Pavlovi\'c~\cite{MR2095627} and Friedlander--Pavlovi\'c~\cite{MR2038114}. Here, a great deal is known about finite-time blow-up and forward energy cascade for both the inviscid and viscous systems, with and without a force; see for instance~\cite{MR2038114,MR2095627,MR2180809,MR2337019,MR2415066,MR2600714,MR2844828,MR3415578}. While the DN model has been the subject of much more attention than Obukhov, it appears that none of the negative results have been successfully transferred to the PDE setting.\footnote{On the other hand, certain \emph{positive} results have been motivated by dyadic models; see, for instance, \cite{MR1911664} and \cite{MR3318746}.} For a more complete review of the theory of the DN and other models, we refer the reader to the survey article~\cite{MR4607726}.

\subsection{Inverse energy cascade and instantaneous blow-up}\label{sec:backward}

Our results for the viscous version of the Obukhov model fall into three categories depending on the degree of intermittency. First, in the energy-subcritical case, we have the following. 

\begin{theorem}[Energy-subcritical instantaneous blow-up]\label{th:alpha<2}
Let $\alpha\in[1,2)$ and $N_k = \lambda^{k}$ for some $\lambda>2^{1/(2-\alpha)}$. Then there exists $X\in C([0,\infty);H^s)$ for all $s<\alpha-2$, satisfying \eqref{Obukhov}, with
\[
\lim_{t\to0+}X_k(t) = 0, \qquad k=1,2,\dots,
\]
\[X\in C((0,\infty);H^s) \qquad \forall s\in\mathbb R,\]
and
\begin{align}\label{blow-up_rate}
\limsup_{t \to 0+} t^{\frac{r-\alpha+2}2}\|X(t)\|_{H^r} >0
\end{align}
for all $r>\alpha-2$.

In particular, $X$ and the zero solution are distinct solutions of
\eqref{Obukhov} with the same datum, in $C([0,\infty);H^s)$ for every
$s<\alpha-2$ and in $L^\infty_tB^{\alpha-2}_{2,\infty}$.
\end{theorem}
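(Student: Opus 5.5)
The plan is to build $X$ as a limit of solutions launched at ever higher frequencies and ever earlier times, exploiting the inverse-cascade structure of \eqref{Obukhov}. In the equation for $X_k$, the term $N_k^\alpha X_{k+1}^2$ feeds mode $k$ from mode $k+1$, while $-N_{k-1}^\alpha X_{k-1}X_k$ drains mode $k$ into mode $k-1$. Hence energy only moves downward, and modes above the initially excited ones remain identically zero.

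\textbf{Approximating solutions and scaling.} With $N_k=\lambda^k$, I would use the discrete scaling map
\[
(SX)_k(t)=\lambda^{\alpha-2}X_{k+1}(\lambda^{-2}t).
\]
A direct check shows that $S$ maps solutions of \eqref{Obukhov} to solutions, except for the boundary term at $k=1$, and that it preserves $\sup_k N_k^{\alpha-2}|X_k|$.

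For each $n$, let $X^{(n)}$ be the solution on $[t_n,\infty)$, $t_n=\lambda^{-2n}$, with data
\[
X^{(n)}_k(t_n)=A\,\lambda^{(2-\alpha)n}\delta_{kn}
\]
for a fixed $A>0$. Then:
\begin{itemize}
\item $X^{(n)}$ is supported in $k\le n$, so it is a finite ODE system and globally smooth.
\item It stays nonnegative, since $X_k'\ge -(\nu N_k^2+N_{k-1}^\alpha X_{k-1})X_k$.
\item It satisfies the energy inequality, so its $\ell^2$ norm is nonincreasing.
\end{itemize}
Morally $X^{(n)}$ is the $S^{-n}$-rescaling of one fixed profile, up to the boundary at $k=1$. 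This suggests all estimates can be reduced to a single time scale.

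\textbf{Uniform critical bounds and smoothing.} The key uniform estimate is
\[
|X^{(n)}_k(t)|\le C N_k^{2-\alpha}\min\{1,(N_k^2 t)^{-M}\},
\]
uniform in $n$ and for any fixed $M$. I would prove it by a downward induction on $k$ via a comparison/supersolution argument, using positivity and
\[
X_k'\le -\nu N_k^2X_k+N_k^\alpha X_{k+1}^2.
\]
If $X_{k+1}\le B N_{k+1}^{2-\alpha}$, then the forcing is at most $B^2\lambda^{2(2-\alpha)}N_k^{4-\alpha}$, and the stationary value is $B^2\lambda^{2(2-\alpha)}\nu^{-1}N_k^{2-\alpha}$. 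Since the problem is exactly critical, a naive induction loses a constant factor each step, so I would instead use an invariant region in the variables $Y_k=N_k^{\alpha-2}X_k$. The quantity $\lambda^{2-\alpha}$ governs whether this region closes. I expect the hypothesis $\lambda^{2-\alpha}>2$ to be what makes the draining term $-N_{k-1}^\alpha X_{k-1}X_k$ and the gain term balance into a bounded region, perhaps a Lyapunov-type functional $\sup_k Y_k$ or a weighted sum.

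The $t$-decay for $N_k^2t\gg1$ follows the same way once $X_{k+1}$ decays. Because modes above $n$ vanish and $t_n\to0$, this gives, for each fixed $t>0$, rapid decay in $k$ uniformly in $n$. That yields $H^s$ bounds for every $s$ on $[\tau,\infty)$ for each $\tau>0$. It also yields $\sup_t\sup_k N_k^{\alpha-2}X_k\le C$, hence uniform $H^s$ bounds for $s<\alpha-2$ down to $t=0$.

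\textbf{Limit and nontriviality.} Each mode satisfies the uniform Lipschitz bound
\[
|X_k'|\lesssim N_k^{4-\alpha}\quad\text{on }[\tau,\infty).
\]
Arzel\`a--Ascoli and a diagonal argument then give a limit $X$ solving \eqref{Obukhov} on $(0,\infty)$, smooth in the sense $X\in C((0,\infty);H^s)$ for all $s$. Continuity into $H^s$, $s<\alpha-2$, at $t=0$ with $X(0)=0$ follows from two facts. First, the uniform critical bound makes the tail $\sum_{k\ge K}N_k^{2s}X_k^2$ small uniformly. Second, the low modes vanish as $t\to0$: for $N_k^2t\ll1$, mode $k$ has not yet been reached by the cascade, since propagation from mode $m$ to mode $k$ takes time $\gtrsim N_k^{-2}$ by the same scaling.

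The main obstacle is the lower bound \eqref{blow-up_rate}, i.e.\ showing the limit is not zero. For this I would prove, uniformly in $n$, a quantitative nondegeneracy
\[
X^{(n)}_m(t)\ge c\,N_m^{2-\alpha}\quad\text{for } t\in[\lambda^{-2m},2\lambda^{-2m}],\ m\le n-m_0.
\]
Once this holds,
\[
t^{\frac{r-\alpha+2}{2}}\|X(t)\|_{H^r}\ge t^{\frac{r-\alpha+2}{2}}N_m^{r}X_m\gtrsim c
\]
at $t=\lambda^{-2m}$.

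To get the nondegeneracy, I would track the amount of energy handed from mode $k+1$ to mode $k$. The viscous loss on mode $k$ during the transfer time $\sim N_k^{-2}$ is a fixed fraction, while the transfer rate $N_k^\alpha X_{k+1}^2$ relative to $N_k^2X_k$ involves the ratio $\lambda^{2(2-\alpha)}$. The condition $\lambda^{2-\alpha}>2$ should ensure that the normalized amplitude $Y_k$ is pushed above a fixed threshold rather than decaying geometrically along the cascade. Concretely, I would find a lower invariant region $Y_k\ge c$ at the appropriate times, propagated downward by induction.

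If closing this explicitly proves difficult, the fallback is a soft argument. It uses a Lyapunov functional for the rescaled dynamics under $S$, showing that the normalized profile cannot collapse to zero. The uniform lower bound then transfers to the limit.
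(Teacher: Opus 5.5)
Your proposal has a genuine gap. You fix a single amplitude $A$ for all $n$ and hope that invariant regions, propagated downward from the top shell, give both the uniform bound $Y_k:=N_k^{\alpha-2}X_k\le C$ and the nondegeneracy $Y_m\ge c$. This cannot work, because in the normalized variables the cascade has an unstable threshold, so no such region exists.

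\textbf{Small $A$.} Set $M_k:=\sup_tY_k^{(n)}(t)$ and $D:=\lambda^{2(2-\alpha)}/\nu$. Positivity and variation of constants give $M_k\le DM_{k+1}^2$, hence $M_k\le D^{-1}(DA)^{2^{n-k}}$. So if $A<D^{-1}$, every fixed mode tends to $0$ as $n\to\infty$ and your limit is the zero solution. This also defeats your soft fallback, since the normalized profile really does collapse.

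\textbf{Large $A$.} Heuristically, the viscous loss in the transfer from shell $k+1$ to shell $k$ is only a fraction of order $\nu\lambda^\alpha/Y_{k+1}$. The nearly inviscid two-shell dynamics conserve $Y_k^2+\lambda^{2(2-\alpha)}Y_{k+1}^2$, so $Y_k\approx\lambda^{2-\alpha}Y_{k+1}$ grows geometrically down the chain. Then $X^{(n)}$ carries energy comparable to $A\lambda^{(2-\alpha)n}$ into the low shells, and no bound uniform in $n$ can hold.

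So a fixed amplitude could work only if it were exactly critical. Proving that such an amplitude exists is itself a shooting problem, which your proposal does not address. Note also that the hypothesis $\lambda^{2-\alpha}>2$ does not close a bounded region; its role is to make super-threshold pulses persist.

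\textbf{What the paper does instead.} It tunes the amplitude separately for each approximation and anchors both bounds at the bottom shell.
\begin{itemize}
\item \emph{Shooting.} The data are $A_KN_{K+1}^{2-\alpha}\delta_{k,K+1}$. The map $A\mapsto\max_tY_1^{K,A}$ is continuous, lies below $B/2$ at a small $A_-$ (by the doubly exponential bound above), and is at least $B$ at $A=B$. So one can choose $A_K$ with $\max_tY_1=B/2$.
\item \emph{Upper bound.} A one-shell transfer lemma is the only place $\lambda^{2-\alpha}>2$ is used. It says that if $Y_{j+1}$ reaches a threshold $B$ while $Y_{j-1}<B$, then $Y_j$ reaches $B$ within time $C_*N_j^{-2}$. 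Apply it with $j=m-1$, where $m\ge2$ is the lowest shell that ever reaches $B$; it forces shell $m-1$ to reach $B$, a contradiction. Hence $Y_k<B$ for all $k$.
\item \emph{Lower bound.} This runs upward rather than downward: $M_{k+1}\ge(M_k/D)^{1/2}$, starting from $M_1=B/2$, gives $M_k\ge m_0$ for every $k$.
\item \emph{Timing.} The first time $t_k$ at which $Y_k$ reaches $m_0$ is only localized to $[c_0N_k^{-2},C_0N_k^{-(q+2-\alpha)}]$. The upper end comes from the decay of the Lyapunov functional $H_{q,K}$, which also supplies the positive-time bounds you assert without proof.
\end{itemize}
This window is weaker than your $[\lambda^{-2m},2\lambda^{-2m}]$, which you have no mechanism to establish. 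It still suffices for the blow-up rate, because $t_k\ge c_0N_k^{-2}$ and $t_k\to0$.
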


\begin{remark}
Our prior result \cite{cheskidov2025instantaneous} in the full Navier--Stokes setting corresponds to Theorem~\ref{th:alpha<2} upon taking\footnote{More precisely, the construction in \cite{cheskidov2025instantaneous} is logarithmically more intermittent than $\alpha=1$.} $\alpha=1$. Indeed, $\|u(t)\|_{L^\infty}$ corresponds to $\|X(t)\|_{H^{\alpha-1}}$ so the blow-up rate \eqref{blow-up_rate} aligns with the result
\[
\limsup_{t\to0+}t^\frac12\|u(t)\|_{L^\infty}\in(0,\infty)
\]
obtained there.
\end{remark}

In the case $\alpha=2$, the nonlinear terms and the linear term in \eqref{Obukhov} appear to be balanced, and the system is considered energy-critical. In this case, we have the following.
\begin{theorem}[Energy-critical instantaneous blow-up]
\label{th:critical-blowup}
Let $\alpha=2$, $N_k=\lambda^k$ for some $\lambda>1$, and let $\sigma>0$
satisfy
\begin{equation}\label{eq:sigma-separation}
\lambda^\sigma\geq2.
\end{equation}
Then there exists a non-negative, nontrivial solution $X$ of
\eqref{Obukhov} such that
\[
X\in\bigcap_{s<-\sigma}C([0,\infty);H^s)
\cap\bigcap_{r\in\mathbb R}C((0,\infty);H^r),
\qquad X(0)=0,
\]
and
\begin{equation}\label{eq:critical-energy-blowup}
\lim_{t\to0+}\|X(t)\|_{\ell^2}=\infty.
\end{equation}
More precisely, there exist constants $c,t_0>0$ such that
\begin{equation}\label{eq:log-lower-bound}
\|X(t)\|_{\ell^2}
\geq c\left(1+\log\frac{t_0}{t}\right)^{1/3},
\qquad 0<t<t_0.
\end{equation}
\end{theorem}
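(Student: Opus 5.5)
The plan is to build $X$ as a compactness limit of solutions launched from single-mode data at ever higher frequencies. The key structural input is that for $\alpha=2$ the map $X_k(t)\mapsto X_{k+1}(\lambda^{-2}t)$ preserves \eqref{Obukhov} up to the boundary condition at $k=0$, so every dyadic step of the cascade looks the same after rescaling.

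\textbf{Setup and basic properties.} For each $n$, let $X^{(n)}$ solve \eqref{Obukhov} on $[0,\infty)$ with $X^{(n)}(0)=A_ne_n$, where $A_n>0$ is an amplitude to be chosen. Only finitely many modes are excited, so the solution is smooth and global. It stays nonnegative, because each equation has the form $X_k'=(\text{bounded})\cdot X_k+N_k^2X_{k+1}^2$. The nonlinear terms telescope in the energy identity, which gives
\[
\tfrac{d}{dt}\|X\|_{\ell^2}^2=-2\nu\|X\|_{H^1}^2 .
\]
Since $X_k'\ge0$ whenever $X_k=0$, energy only moves downward in $k$, i.e.\ this is an inverse cascade.

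\textbf{One-step transfer lemma (main obstacle).} The heart of the argument is the following claim. Suppose at some time the energy is essentially concentrated at mode $k+1$, with amplitude $a\ge a_*$, where $a_*$ depends only on $\nu,\lambda$. Then within a time of order $N_k^{-2}a^{-1}$ most of it has moved to mode $k$ (and below), and the total energy lost in that window is at most $C/a$. Heuristically, the transfer term $N_k^2X_{k+1}^2$ acts on the time scale $(N_k^2a)^{-1}$, and over that window the dissipation removes roughly $\nu N_k^2a^2\cdot(N_k^2a)^{-1}=\nu a$ from $a^2$. This gives the recursion
\[
E_{k}^2\ge E_{k+1}^2-C E_{k+1},
\qquad\text{equivalently}\qquad
E_k\gtrsim E_{k+1}-C.
\]
I expect the sharper version, with loss $C/E$ per step so that $E^3$ drops by $O(1)$ per step, to come from the fact that a larger amplitude makes the transfer faster. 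Pinning down the exact exponent is where the real work lies, and it is this exponent that produces the $1/3$ in \eqref{eq:log-lower-bound}.

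The difficulty is controlling leakage: energy remaining at mode $k+1$ while mode $k$ already passes energy on to $k-1$, and the resulting pulse spreading over several modes. I would first try to handle this with a Lyapunov-type weighted quantity $\sum_k w_kN_k^{\pm\sigma}X_k^2$. The separation hypothesis $\lambda^\sigma\ge2$ is exactly what allows the tails to be summed geometrically, so I expect it to close the estimate on the weights.

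\textbf{Iteration, limit, and lower bound.} Take $A_n^3\sim n$ and iterate the lemma. The energy reaching mode $k$ then satisfies $E_k^3\gtrsim k$, and it arrives at time
\[
t\approx\sum_{j>k}N_j^{-2}E_j^{-1}\lesssim\lambda^{-2k}.
\]
This arrival time is independent of $n$, and it gives $\|X^{(n)}(t)\|_{\ell^2}\ge c(1+\log(t_0/t))^{1/3}$ on $[\lambda^{-2n},t_0]$. For compactness in $C([0,\infty);H^s)$ with $s<-\sigma$, use uniform bounds $\sup_t N_k^{-\sigma}X^{(n)}_k(t)\lesssim1$. These come from the same weighted functional, together with the observation that at time $t$ only the modes with $N_k^2t\lesssim1$ carry the pulse. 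Parabolic smoothing gives uniform $H^r$ bounds on $[\tau,\infty)$ for every $\tau>0$ and $r\in\mathbb R$.

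Pass to the limit by a diagonal argument. The bound $X^{(n)}_k(t)\lesssim N_k^\sigma$ together with $t\to0$ yields $X(0)=0$. The lower bound passes to the limit by weak lower semicontinuity on each fixed interval $[t,t_0]$, which gives \eqref{eq:critical-energy-blowup} and \eqref{eq:log-lower-bound}. Nontriviality then follows from the lower bound itself.
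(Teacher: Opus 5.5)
\textbf{The gap.} The one-step transfer lemma is never proved, and the quantitative version you iterate is wrong.

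Your first heuristic, $E_k^2\ge E_{k+1}^2-CE_{k+1}$, is the correct one. While shell $k+1$ carries amplitude $\sim a$, it keeps it for a time $\gtrsim (N_k^2a)^{-1}$, because $X_k$ can grow at rate at most $N_k^2a^2$. Over that window the viscosity removes energy of order $\nu\lambda^2 a$. Nothing makes this loss $C/a$: the amplitude drops by an amount of order $\nu$ per shell, whatever $a$ is.

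So, within the very pulse picture you use, data with $A_n^3\sim n$ is exhausted after $O(n^{1/3})$ shells. That is far short of the $n$ shells it must cross. Once some shell has $\max_tX_j<\nu/2$, the Duhamel bound $\max_tX_k\le\nu^{-1}\max_tX_{k+1}^2$ applies; it holds for non-negative solutions with $X_k(0)=0$. It makes every lower shell doubly exponentially small. Hence $X^{(n)}_k\to0$ for each fixed $k$, and your diagonal limit is the zero solution.

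The exponent $1/3$ in \eqref{eq:log-lower-bound} does not describe the cascade dynamics. In your picture one would need $A_n\sim n$, which suggests growth like $\log(t_0/t)$. In the paper the $1/3$ comes from a soft flux count on the limit solution, with $m_*=X_1(t_*)>0$:
\begin{itemize}
\item the flux $N_L^2X_LX_{L+1}^2$ through each of the $K(t)\sim\log(t_0/t)$ shells over $[t,t_*]$ is at least $\tfrac38m_*^2$;
\item $\sum_LN_L^2X_LX_{L+1}^2\le\lambda^{-2}\|X\|_{\ell^2}\|X\|_{H^1}^2$;
\item the energy equality bounds $\int_t^{t_*}\|X\|_{H^1}^2$ by $\|X(t)\|_{\ell^2}^2$.
\end{itemize}
Together these give $K(t)\lesssim\|X(t)\|_{\ell^2}^3$.

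\textbf{Other unproved steps.} Even with $A_n\sim n$, several steps are asserted rather than proved, and they carry the real difficulty.
\begin{itemize}
\item Iterating a one-step lemma over $n$ shells requires stability of the pulse profile against leakage. This is essentially the corrector problem (Step~3 of Section~\ref{sec:second_approach}), and the weighted functional you suggest for it is never specified.
\item Compactness needs upper bounds at each fixed shell that are uniform in $n$. The energy only gives $X^{(n)}_k\le A_n\to\infty$.
\item Without such bounds there is no positive-time smoothing at the critical level $\alpha=2$. The Lyapunov functional of Lemma~\ref{le-Lyapunov} needs a uniform bound on $X_1$.
\item Weak lower semicontinuity gives $\|X(t)\|_{\ell^2}\le\liminf_n\|X^{(n)}(t)\|_{\ell^2}$. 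That is the wrong direction for transferring a lower bound to the limit.
\end{itemize}

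\textbf{How the paper avoids this.} It does no per-step quantitative analysis.
\begin{itemize}
\item For data $A_KN_{K+1}^\sigma$ at shell $K+1$, it chooses $A_K\in(0,B)$ by continuity so that $\max_tN_1^{-\sigma}X_1^K(t)=B/2$.
\item Lemma~\ref{lem:critical-one-shell-transfer}, through a minimal-index contradiction, turns this normalization into the barrier $N_k^{-\sigma}X^K_k\le B$ for all $k$. This is where $\lambda^\sigma\ge2$ enters, via $a\eta_\Lambda>b$; it is not used for tail summation.
\item The barrier bounds $X^K_1$ uniformly. Then $H_{1,K}$ gives $\|X^K(t)\|_{H^1}\lesssim t^{-1}$ uniformly in $K$, hence compactness at positive times.
\item The normalization gives nontriviality, $X_1(t_*)=\tfrac B2N_1^\sigma$, with $t_*>0$ a limit of first hitting times.
\item Energy blow-up follows by contradiction. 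A finite limit of $\|X(t)\|_{\ell^2}$ as $t\to0+$ would give the energy equality starting from $X(0)=0$, forcing $X\equiv0$.
\end{itemize}
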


Next we consider the energy-supercritical Obukhov model. This corresponds to solutions of the 3D Navier--Stokes equations with sufficient intermittency to (potentially) allow finite-time blow-up, non-uniqueness of Leray--Hopf solutions, etc.

\begin{theorem}[Energy-supercritical instantaneous blow-up]\label{th:alpha>2}
Let $\alpha>2$ and $N_k = \lambda^{k}$ for $k\geq0$, for some $\lambda>1$. Then there exists $X\in C((0,\infty);H^s)$ for all $s\in\mathbb R$, satisfying \eqref{Obukhov}, with
\[
\lim_{t\to0+}X_k(t) = 0, \qquad k=1,2,\dots,
\]
and
\[
\lim_{t \to 0+} \|X(t)\|_{\ell^2} =1.
\]
Further, every positive Sobolev norm blows up at the rate dictated by the
scaling: for $0<s<\alpha$ and $\varepsilon>0$,
\[
t^{-s/\alpha}\lesssim_s\|X(t)\|_{H^s}\lesssim_{s,\varepsilon}\,t^{-s/\alpha-\varepsilon},
\qquad 0<t\leq t_s.
\]

\end{theorem}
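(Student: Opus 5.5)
The plan is to build $X$ as a limit of solutions launched from a single high mode, exploiting the fact that for non-negative data the nonlinearity of \eqref{Obukhov} moves energy only downward in $k$. First record the basic structure. Positivity is preserved, since $X_k'\geq -(\nu N_k^2+N_{k-1}^\alpha X_{k-1})X_k$. The nonlinear terms telescope in the energy identity, giving
\[
\tfrac12\tfrac{d}{dt}\|X\|_{\ell^2}^2=-\nu\|X\|_{H^1}^2 .
\]
The coefficient $N_k^\alpha X_{k+1}^2$ pumps mode $k+1$ into mode $k$. For each $n$ let $X^{(n)}$ be the solution with $X^{(n)}(0)=e_n$, which is finite-dimensional in effect because modes $k>n$ stay zero. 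The heuristic is that a unit pulse sitting at mode $k$ transfers to mode $k-1$ on the nonlinear time scale $N_k^{-\alpha}$. Over that time viscosity removes only about $\nu N_k^{2}N_k^{-\alpha}=\nu N_k^{2-\alpha}$ of its energy. Since $\alpha>2$ these losses are summable, so the pulse reaches mode $k_0$ at time about $\sum_{k>k_0}N_k^{-\alpha}\approx N_{k_0}^{-\alpha}$ with energy at least $1-C\sum_{k>k_0}N_k^{2-\alpha}$.

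The key step, and I expect it to be the main obstacle, is a uniform quantitative description of this traveling pulse. I would aim for constants independent of $n$ such that, writing $K(t)$ for the index with $\tau_n(K)\leq t<\tau_n(K-1)$ (where $\tau_n(k)$ is the passage time through mode $k$):
\begin{enumerate}[(a)]
\item the passage times satisfy $\tau_n(k)\sim N_k^{-\alpha}$ up to a fixed multiplicative factor;
\item the profile is localized, with $X_k(t)\lesssim \theta^{|k-K(t)|}$ for some $\theta<1$. Behind the front (large $k$) this follows because $X_{k+1}'\leq -N_k^\alpha X_kX_{k+1}$ drains those modes exponentially once $X_k$ is of order one. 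Ahead of the front (small $k$) it follows because mode $k$ is fed only through the small $X_{k+1}^2$.
\end{enumerate}
The tools I would try first are a Lyapunov functional such as a weighted energy $\sum_k \mu^{k}X_k^2$ with $\mu>1$, whose derivative is negative-definite modulo boundary terms, combined with a comparison/barrier argument applied one mode at a time. To rule out the pulse stalling, I would use that the mode carrying most of the energy drains at rate at least $cN_k^{\alpha}$. The danger is a non-monotone profile that spreads over many modes. If that happens, I would fall back on the explicit scaling invariance of the inviscid dynamics, $Y_k(t)=X_{k+1}(\lambda^{\alpha}t)$, to reduce to a single profile estimate, treating viscosity as a perturbation of size $N_k^{2-\alpha}$.

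Granting these estimates, shift time to fix the normalization: set $\tilde X^{(n)}(t)=X^{(n)}(t-\tau_n(\infty))$ so that every pulse passes mode $k$ near time $N_k^{-\alpha}$, where $\tau_n(\infty)\to0$. On every interval $[\delta,T]$ only finitely many modes are non-negligible, and parabolic smoothing from the viscous term gives bounds in every $H^s$ that are uniform in $n$. Arzel\`a--Ascoli plus a diagonal argument then produce a limit $X\in C((0,\infty);H^s)$ for all $s$ that solves \eqref{Obukhov}. Localization at time $t$ to modes $N_k\sim t^{-1/\alpha}$ gives $X_k(t)\to0$ for each fixed $k$ as $t\to0+$. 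It also gives tightness, so the energy passes to the limit: $\|X(t)\|_{\ell^2}^2\geq 1-C\sum_{N_k\gtrsim t^{-1/\alpha}}N_k^{2-\alpha}\to1$, while the energy inequality gives $\|X(t)\|_{\ell^2}\leq1$. Hence $\|X(t)\|_{\ell^2}\to1$.

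Finally, the Sobolev rates follow from the profile. For the lower bound, a fixed fraction of the energy lies at modes with $N_k\gtrsim t^{-1/\alpha}$, so $\|X(t)\|_{H^s}\gtrsim t^{-s/\alpha}$. For the upper bound, the decay $\theta^{|k-K(t)|}$ controls the tail $\sum_k N_k^{2s}X_k^2$ whenever $\lambda^{2s}\theta^2<1$. For larger $s<\alpha$ the geometric decay may not beat $N_k^{2s}$ uniformly. There I would use the drainage ODE behind the front, which gives super-geometric decay after waiting a time comparable to $N_{K}^{-\alpha}$, and accept a loss $t^{-\varepsilon}$ coming from the logarithmic number of modes involved. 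This yields $\|X(t)\|_{H^s}\lesssim_{s,\varepsilon}t^{-s/\alpha-\varepsilon}$ for $0<s<\alpha$ and $t\leq t_s$.
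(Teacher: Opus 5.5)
Your reduction to solutions launched from a single high shell is the paper's Galerkin scheme. Your heuristics are sound: downward flux for non-negative data, viscous losses $\sum_k N_k^{2-\alpha}<\infty$, and $X_k\le N_k^\alpha t$ for the lower bound. But every conclusion you draw rests on the uniform pulse estimates (a)--(b). You do not prove them, and the tools you name would not prove them.

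\emph{The weighted energy gives no rate.} $\sum_k N_k^{2q}X_k^2$ is non-increasing, but its derivative $-2\nu\sum_k N_k^{2+2q}X_k^2-2(\lambda^{2q}-1)\sum_k N_k^{\alpha+2q}X_kX_{k+1}^2$ is not negative-definite. The cubic part vanishes when a single shell is excited (as at $t=0$), so it cannot exclude stalling.

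\emph{The drainage argument behind the front fails.} It uses $X_{k+1}'\le -N_k^\alpha X_kX_{k+1}$, which is false because it drops the feeding term $N_{k+1}^\alpha X_{k+2}^2$. Moreover, $X_k$ is of order one only for a time $\sim N_{k-1}^{-\alpha}$, so the drain $\int N_k^\alpha X_k\,dt$ over that window is only $O(\lambda^\alpha)$, a bounded factor. Formally, for a self-similar profile $X_k(t)=F(N_k^\alpha t)$ one has $(\log F)'(s)\ge -\lambda^{-\alpha}F(\lambda^{-\alpha}s)$. Hence $F$ cannot be integrable at infinity, and trailing shells decay at best geometrically, roughly like $\lambda^{-\alpha j}$. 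The super-geometric decay you invoke for larger $s$ is therefore not available.

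\emph{The scaling fallback does not reduce the problem.} The symmetry should read $Y_k(t)=X_{k+1}(\lambda^{-\alpha}t)$. It is broken by the viscosity and by $X_0=0$. Exploiting it would require existence and stability of a self-similar pulse, which is harder than the theorem.

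What is missing is a \emph{coercive} monotone quantity; with it, the pulse never needs to be tracked. The paper adds a cross term, $H_{q,K}=\sum_k N_k^{2q}X_k^2-c\sum_k N_k^{2q}X_kX_{k+1}$ with $c$ small. Its derivative contributes $c\sum_k N_k^{\alpha+2q}X_{k+1}^3$ to $-H_{q,K}'$. This absorbs the adverse cubic terms and, by H\"older and $q<\alpha$, dominates $H_{q,K}^{3/2}$. Hence $H_{q,K}'\le -CH_{q,K}^{3/2}$ uniformly in $K$, so $\|X^K(t)\|_{H^q}\lesssim t^{-1}$ no matter how high the data sits.

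The rest follows without pulse information. With $q=(\alpha+2)/2\in(2,\alpha)$, interpolation against $\|X^K\|_{\ell^2}\le1$ gives $\|X^K(t)\|_{H^1}^2\lesssim t^{-2/q}$, which is integrable at $0$. Hence $1-\|X^K(t)\|_{\ell^2}^2\lesssim t^{1-2/q}$ uniformly in $K$; this is the rigorous form of your summable-loss heuristic. Tightness and the positive-time $H^s$ bounds come from the sign of the flux, $\sum_{k\ge L}(X_k^K(t))^2\le e^{-2\nu N_L^2t}$, since parabolic smoothing is not automatic in a supercritical model. The upper rate $t^{-s/\alpha-\varepsilon}$ comes from interpolating with $q\uparrow\alpha$. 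The lower rate comes from $X_k\le N_k^\alpha t$, as you had. No passage times, localization, or time shift are needed.
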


Based on Theorem~\ref{th:alpha>2}, we conjecture the following for the true Navier--Stokes equations.

\begin{conjecture}\label{conjecture}
There exists a weak solution $u\in L_t^\infty L_x^2\cap L_t^2\dot H_x^1([0,\infty)\times\mathbb T^3)$ of the 3D Navier--Stokes equations with instantaneous blow-up at $t=0$ obeying both
\begin{align*}
u(t)\rightharpoonup0\quad\text{weakly in }L^2(\mathbb T^3)\text{ as }t\to0
\end{align*}
and
\begin{align*}
    \lim_{t \to 0+} \|u(t)\|_{L^2} =1.
\end{align*}

\end{conjecture}

Such a $u$ would be a second finite-energy solution with zero datum, lying
in the regularity class of Leray--Hopf solutions but violating the energy
inequality at the initial time.

\begin{remark}
We remark that Conjecture~\ref{conjecture} is quite distinct from the well-known problem of uniqueness of Leray--Hopf solutions, which may exhibit energy jumps but only in the direction of \emph{losing} energy relative to the data. One distinction is that Type~I forward self-similar constructions as explored in \cite{MR3341963,MR4593274,hou2025nonuniqueness,ionescu2026non} are ruled out as candidates for the conjecture. Indeed, a self-similar solution would lie in the critical space $L^\infty_tB^{-1}_{\infty,\infty}$, and interpolation with the energy space $u\in L^2_t\dot H^1$ places such a solution in
$L^3_tB^{1/3}_{3,c_0}$, a class in which the energy equality holds
\cite{MR2422377}.  A solution of Conjecture~\ref{conjecture} must therefore
have an unbounded critical norm as $t\to0+$, i.e.\ it must be of Type~II (see Figure~\ref{fig:type1-type2}).
We confirm this condition in Section~\ref{sec:second_approach} which predicts that the component of the
solution localized near frequency $N$ obeys $\|P_Nu\|_{L^\infty}\sim N^{\alpha-1}$
near the time scale $N^{-\alpha}$, hence
$\|u(t)\|_{B^{-1}_{2,\infty}}\sim t^{-(\alpha-2)/\alpha}$ with
$\alpha>2$. This should be
contrasted with the case $\alpha<2$ of Theorem~\ref{th:alpha<2}, where the
solution is bounded in the critical space and the blow-up is of Type~I.
\end{remark}

\begin{figure}[ht]
\centering
\includegraphics[width=.6\textwidth]{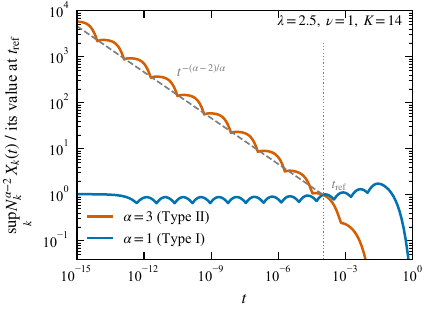}
\caption{The dyadic critical norm $\sup_kN_k^{\alpha-2}X_k$ (which scales, for instance, like
$\|u\|_{B^{-1}_{2,\infty}}$) for the supercritical
solution of Theorem~\ref{th:alpha>2} ($\alpha=3$) and the subcritical solution of
Theorem~\ref{th:alpha<2} ($\alpha=1$). As $t\to0+$ the
supercritical critical norm grows at the predicted rate $t^{-(\alpha-2)/\alpha}$, so the blow-up is of Type~II, while the subcritical one fails to grow, i.e.,\ Type~I.}
\label{fig:type1-type2}
\end{figure}

Next, in the case of the inviscid Obukhov model, we have the following results.

\begin{theorem}[Inviscid instantaneous blow-up]
\label{th:inviscid}
Let $\alpha>0$ and $N_k=\lambda^k$, where $\lambda>1$.  There exists a
non-negative solution $X$ of \eqref{Obukhov-inviscid}, componentwise classical
on $(0,\infty)$, such that, after setting $X(0)=0$,
\[
X\in C([0,\infty);H^s)
\quad\text{for every }s<0,
\qquad
X\in C((0,\infty);H^s)
\quad\text{for every }s<\alpha,
\]
and
\begin{equation}\label{inviscid-energy-jump}
\lim_{t\to0+}\|X(t)\|_{H^s}=0
\quad\text{for every }s<0,
\qquad
\|X(t)\|_{\ell^2}=1\quad\text{for every }t>0.
\end{equation}
Moreover, for every $0<r<\alpha$, there exist constants $c_r,t_r>0$ such
that
\begin{equation}\label{inviscid-blowup-rate}
\|X(t)\|_{H^r}\geq c_rt^{-r/\alpha},
\qquad 0<t\leq t_r.
\end{equation}
Consequently, for every $s<0$, the zero solution and $X$ are distinct
componentwise solutions in $C([0,\infty);H^s)$ with the same initial value.
\end{theorem}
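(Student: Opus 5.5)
The plan is to construct $X$ as a limit of finite-dimensional solutions launched from ever higher frequencies at ever earlier times, using energy conservation, positivity and the discrete scaling symmetry of \eqref{Obukhov-inviscid}. Two structural facts drive everything. First, if $X_k(0)\geq0$ then $X_k'\geq -N_{k-1}^\alpha X_{k-1}X_k$, so every component stays non-negative. Second, for finitely supported data the energy is exactly conserved, because
\[
\tfrac12\tfrac{d}{dt}\|X\|_{\ell^2}^2=\sum_k\bigl(N_k^\alpha X_kX_{k+1}^2-N_{k-1}^\alpha X_{k-1}X_k^2\bigr)
\]
telescopes to zero. Moreover, for non-negative solutions the tail $E_{\geq K}=\sum_{k\geq K}X_k^2$ obeys $\frac{d}{dt}E_{\geq K}=-2N_{K-1}^\alpha X_{K-1}X_K^2\leq0$. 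So energy only moves downward in $k$ (an inverse cascade), and backward in time it moves to high frequencies. Finally, the map $X_k(t)\mapsto X_{k+1}(\lambda^{-\alpha}t)$ sends solutions to solutions and preserves energy, except for the boundary condition $X_0=0$.

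\textbf{Step 1 (approximate solutions).} For each $n$, I would let $X^{(n)}$ solve \eqref{Obukhov-inviscid} on $[t_n,\infty)$ with $t_n=\lambda^{-\alpha n}$ and datum $X^{(n)}(t_n)=e_n$. Modes $k>n$ stay identically zero, so this is a finite-dimensional ODE. Energy conservation gives global existence, $X^{(n)}_k\geq0$, and $\|X^{(n)}(t)\|_{\ell^2}=1$ for all $t\geq t_n$.

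\textbf{Step 2 (uniform localization).} This is the main obstacle. I need bounds, uniform in $n$, showing that at time $t$ the energy of $X^{(n)}$ sits at frequencies $N_k\sim t^{-1/\alpha}$. Concretely, I would aim for
\[
E^{(n)}_{\geq K}(t)\leq \varepsilon(N_K^\alpha t)\quad\text{and}\quad E^{(n)}_{<K}(t)\leq\varepsilon\bigl((N_K^\alpha t)^{-1}\bigr),
\]
with $\varepsilon(\tau)\to0$ as $\tau\to\infty$, and with the first bound decaying fast enough to give $\|X^{(n)}(t)\|_{H^r}\lesssim_r t^{-r/\alpha}$ for $r<\alpha$.
\begin{itemize}
\item \emph{High-frequency tail.} I would first try a flux argument: the energy crossing from mode $K$ to $K-1$ moves at rate $N_{K-1}^\alpha X_{K-1}X_K^2$. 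I would show that once $E_{\geq K}$ carries a fixed fraction of the energy, $X_{K-1}$ is quickly pumped (since $X_{K-1}'\geq N_{K-1}^\alpha X_K^2-\dots$), which forces transfer on the time scale $N_K^{-\alpha}$. Iterating over $K$ should give geometric decay of the tail.
\item \emph{Low-frequency part.} This is the causality estimate: energy starting at mode $n$ needs time at least about $\sum_{k\geq K}N_k^{-\alpha}\sim N_K^{-\alpha}$ to reach below $K$. I would prove it by bounding $X_{K-1}$ through a Gronwall chain down the modes.
\end{itemize}
Scaling reduces all of this to a single profile. Away from $k=1$, $X^{(n+1)}$ is the rescaling of $X^{(n)}$, so the bounds only need to be proved on times of order one for the normalized problem $n$ fixed, uniformly in how far the boundary is. If the direct flux argument proves too delicate, the fallback is the Lyapunov-function approach mentioned in the abstract, applied to a weighted energy $\sum_k N_k^{r}X_k^2$ shifted by the scaling.

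\textbf{Step 3 (limit and conclusions).} On compact subsets of $(0,\infty)$, each component $X^{(n)}_k$ is equi-Lipschitz, since $|X_k'|\leq 2N_k^\alpha$. Arzel\`a--Ascoli and a diagonal argument then give a componentwise limit $X$ solving \eqref{Obukhov-inviscid} classically on $(0,\infty)$. From here the conclusions follow:
\begin{itemize}
\item The uniform tail bound of Step 2 upgrades componentwise convergence to strong convergence in $H^s$ for every $s<\alpha$, locally uniformly in $t>0$. This gives $X\in C((0,\infty);H^s)$, $\|X(t)\|_{\ell^2}=1$, and the upper bound in $H^r$.
\item The low-frequency bound gives $\|X(t)\|_{H^s}^2\lesssim \sum_k N_k^{2s}\min\bigl(1,\varepsilon((N_k^\alpha t)^{-1})\bigr)\to0$ as $t\to0^+$ for $s<0$. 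This yields continuity at $t=0$ in $H^s$, $s<0$, after setting $X(0)=0$.
\item Since the energy is $1$ but sits at frequencies $\gtrsim t^{-1/\alpha}$, we get $\|X(t)\|_{H^r}\geq c_r t^{-r/\alpha}$, which is \eqref{inviscid-blowup-rate}.
\end{itemize}
Distinctness from the zero solution is then immediate.
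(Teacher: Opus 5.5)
There is a genuine gap, and it sits where you flag ``the main obstacle'': the uniform-in-$n$ high-frequency tail bound of Step~2 is never established, and everything nontrivial in the theorem depends on it. Without it, the componentwise limit of Step~3 can lose energy to infinite frequency; indeed nothing in your argument rules out $X\equiv0$. So neither $\|X(t)\|_{\ell^2}=1$, nor $X\in C((0,\infty);H^s)$ for $0\leq s<\alpha$, nor the lower bound \eqref{inviscid-blowup-rate} follows, since the last uses that a unit of energy is present at time $t$.

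Your proposed routes to Step~2 do not close:
\begin{itemize}
\item \emph{Flux sketch.} A fixed fraction of energy in $E_{\geq K}$ says nothing about $X_K$ itself, since the energy may sit many shells higher. The pump $N_{K-1}^\alpha X_K^2$ into $X_{K-1}$ also competes with the drain $N_{K-2}^\alpha X_{K-2}X_{K-1}$. You give no mechanism making the per-shell transfer times summable and uniform for arbitrary $\lambda>1$.
\item \emph{Scaling reduction.} The rescaled $X^{(n)}$ solves the system with $X_1\equiv0$. By contrast, $X^{(n+1)}_1$ becomes positive instantly and feeds back up the chain. So ``uniformly in how far the boundary is'' is exactly the statement still to be proved.
\item \emph{Lyapunov fallback.} A plain weighted energy $\sum_kN_k^{2q}X_k^2$, as you describe it, cannot work. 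Its derivative is $-2(\lambda^{2q}-1)\sum_kN_k^{\alpha+2q}X_kX_{k+1}^2$. This vanishes at the datum $e_n$, and whenever the energy sits in one shell with an empty lower neighbour, so it yields no decay rate.
\end{itemize}

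The missing idea is the cross-term correction used in the paper:
\[
H_{q,K}=\sum_kN_k^{2q}X_k^2-c\sum_kN_k^{2q}X_kX_{k+1},\qquad 0<q<\alpha .
\]
For small $c$ this is still equivalent to $\|X\|_{H^q}^2$. Its derivative, however, acquires the coercive cubic terms $c\sum_kN_k^{\alpha+2q}X_{k+1}^3$, which detect energy sitting in a shell whose lower neighbour is empty. The remaining steps are then:
\begin{itemize}
\item H\"older's inequality (using $q<\alpha$) and $X_1\leq1$ from energy conservation give $H_{q,K}'\leq-c_qH_{q,K}^{3/2}+C_q$ uniformly in $K$.
\item Hence $H_{q,K}(t)\lesssim1+t^{-2}$, which yields the tail bound $\sup_K\sup_{t\geq\delta}\sum_{k\geq L}(X_k^K)^2\lesssim_\delta N_L^{-2q}$.
\item With this, your Step~3 goes through: strong convergence on $[\delta,T]$, energy one for $t>0$, and $H^s$-continuity for $s<\alpha$.
\end{itemize}

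The low-frequency side needs no Gronwall chain or causality estimate. Positivity and $X_{k+1}\leq1$ give directly $0\leq X_k(t)\leq N_k^\alpha t$, so
\[
\sum_{k\leq L}X_k(t)^2\lesssim t^2N_L^{2\alpha}.
\]
This is all that is needed both for $H^s$-continuity at $t=0$ when $s<0$, and for placing half the energy above $N_L\sim t^{-1/\alpha}$ in the lower bound.
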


Since the inviscid Obukhov model is time reversible, we also have

\begin{corollary}[Inviscid finite-time blow-up]
\label{cor:inviscid-forward-blowup}
Let $\alpha>0$ and $N_k=\lambda^k$, where $\lambda>1$.  Given any $T>0$,
there exists a non-positive solution $Y$ of \eqref{Obukhov-inviscid}, with
$Y_k\in C^1([0,\infty))$ for every $k\geq1$, such that
\[
Y(t)=0\quad\text{for }t\geq T,
\qquad
\|Y(t)\|_{\ell^2}=1\quad\text{for }0\leq t<T.
\]
Moreover,
\begin{equation}\label{inviscid-forward-regularity}
Y\in\bigcap_{s<\alpha}C([0,T);H^s)
\cap\bigcap_{s<0}C([0,\infty);H^s),
\end{equation}
and for every $0<r<\alpha$, there are constants $c_r>0$ and
$0<\delta_r\leq T$ such that
\begin{equation}\label{inviscid-forward-blowup-rate}
\|Y(t)\|_{H^r}
\geq c_r(T-t)^{-r/\alpha},
\qquad T-\delta_r<t<T.
\end{equation}
\end{corollary}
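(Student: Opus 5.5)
The plan is to deduce the corollary from Theorem~\ref{th:inviscid} by time reversal, translation, and extension by zero. The inviscid system \eqref{Obukhov-inviscid} is $X_k'=N_k^\alpha X_{k+1}^2-N_{k-1}^\alpha X_{k-1}X_k$, whose right-hand side is homogeneous of degree two. Hence if $X$ solves it, so does $Y(t):=-X(-t)$ (and also $Y(t)=-X(T-t)$, since the system is autonomous). Indeed, $Y_k'(t)=X_k'(T-t)=N_k^\alpha X_{k+1}^2-N_{k-1}^\alpha X_{k-1}X_k$ evaluated at $T-t$, and this equals $N_k^\alpha Y_{k+1}^2-N_{k-1}^\alpha Y_{k-1}Y_k$ at time $t$, because the quadratic terms are invariant under $Y=-X$.

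So, with $X$ the non-negative solution from Theorem~\ref{th:inviscid}, we define
\[
Y(t)=-X(T-t)\ \text{ for } 0\le t<T,\qquad Y(t)=0\ \text{ for } t\ge T.
\]
Then $Y\le0$, and $\|Y(t)\|_{\ell^2}=\|X(T-t)\|_{\ell^2}=1$ for $0\le t<T$ by \eqref{inviscid-energy-jump}. For $s<\alpha$ we have $Y\in C([0,T);H^s)$, since $X\in C((0,\infty);H^s)$ and $T-t$ ranges over $(0,T]$. The rate \eqref{inviscid-forward-blowup-rate} follows directly from \eqref{inviscid-blowup-rate} with $\delta_r=\min(t_r,T)$.

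The only genuine points are the behavior at the gluing time $t=T$. First, continuity in $H^s$ for $s<0$ on $[0,\infty)$ follows from $\lim_{\tau\to0+}\|X(\tau)\|_{H^s}=0$. Second, and this is the step needing care, we must verify $Y_k\in C^1([0,\infty))$ and that the equation holds classically at $t=T$. Each $X_k$ is classical on $(0,\infty)$ and $X_k(\tau)\to0$ as $\tau\to0+$ (from the $H^s$, $s<0$, convergence). The right-hand side of the $k$th equation involves only $X_{k-1},X_k,X_{k+1}$, so $X_k'(\tau)\to0$ as $\tau\to0+$. Consequently $Y_k$ is continuous at $T$ with value $0$, and its left derivative exists and equals $\lim X_k'=0$ by the mean value theorem. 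The right derivative is also $0$, so $Y_k\in C^1$ with $Y_k'(T)=0$. This matches the right-hand side of the equation at $Y(T)=0$, so the equation holds classically for all $t\ge0$.
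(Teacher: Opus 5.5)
Your proposal is correct and follows essentially the paper's proof: the time reversal $Y(t)=-X(T-t)$ via the invariance $Q(-X)=Q(X)$, extension by zero, and gluing at $t=T$ using $X_k(\tau)\to0$ and $X_k'(\tau)=Q_k(X(\tau))\to0$ as $\tau\to0+$, with $\delta_r=\min\{T,t_r\}$. Your mean-value-theorem argument for the one-sided derivative at $T$ simply spells out the property $X_k\in C^1([0,\infty))$, which the paper quotes from the proof of Theorem~\ref{th:inviscid}.
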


\begin{remark}
\begin{itemize}
\item [(i)] The solution in Corollary~\ref{cor:inviscid-forward-blowup} reverses the inverse cascade of Theorem~\ref{th:inviscid}: as $t\to T-$, its energy travels toward arbitrarily high shells.  Thus the positive Sobolev blow-up is caused by a forward cascade to infinity, not by growth of the total energy.  The energy remains equal to one before $T$ and then drops to zero under the terminal continuation, giving a complete anomalous energy loss (See Figure~\ref{fig:inviscid}).
\item [(ii)] We note that $Y(0)$ belongs to every $H^r$ with $r<\alpha$. We do not expect that it is possible to establish a finite-time blow-up for \eqref{Obukhov-inviscid} starting from initial data in $H^r$ with $r>\alpha$. In fact, in the case\footnote{For the inviscid model there is no generality lost by fixing $\alpha=1$, since the parameter $\lambda$ can be modified freely.} $\alpha=1$, Kiselev and Zlato\v s showed in \cite{MR2180809} that there exists a unique global solution $X\in C([0,T]; H^r)$ to \eqref{Obukhov-inviscid} for any $T>0$, starting from initial data in $H^r$ with $r>1$. In this sense, the blow-up result in Corollary~\ref{cor:inviscid-forward-blowup} is sharp.
\item [(iii)] Corollary~\ref{cor:inviscid-forward-blowup} may also be compared to the recent blow-up of \eqref{Obukhov} constructed by the third author in~\cite{palasek2026finite}. In that case, the initial data is smooth (i.e., $H^s$ for all $s\in\mathbb R$), at the cost of replacing the frequencies $N_k=\lambda^k$ with a super-exponential sequence. 
\item [(iv)] Whether blow-up can occur from data in the marginal space $H^\alpha$, originally asked in \cite{MR2180809}, remains open. In the larger critical space $B^\alpha_{2,\infty}$ with norm $\sup_k N_k^\alpha|X_k|$, blow-up is elementary. Indeed, $Z_k:=N_k^\alpha X_k$ solves
\begin{align*}
Z_k'=\lambda^{-2\alpha}Z_{k+1}^2-Z_{k-1}Z_k,
\end{align*}
whose right-hand side is locally
Lipschitz on $\ell^\infty$; hence
$L(t):=\lim_{k\to\infty}Z_k(t)$ satisfies $L'=-(1-\lambda^{-2\alpha})L^2$. If
$L(0)=-c<0$ (for instance $X_k(0)=-cN_k^{-\alpha}$, which lies in $H^r$ for every
$r<\alpha$), then $\|X(t)\|_{B^\alpha_{2,\infty}}\geq|L(t)|\to\infty$ no later than
$t=\big((1-\lambda^{-2\alpha})c\big)^{-1}$. Note that this mechanism is unavailable for data
in, for instance, $H^\alpha$, since $L\equiv0$.
\end{itemize}
\end{remark}

\begin{figure}[ht]
\centering
\includegraphics{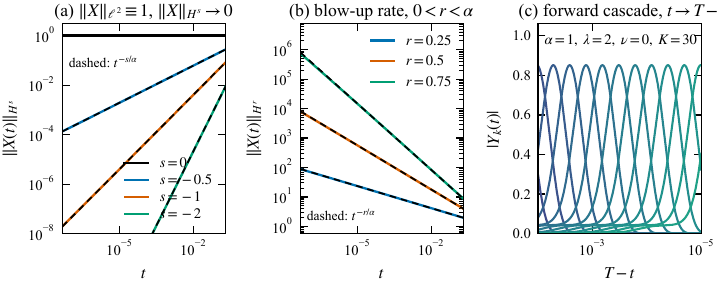}
\caption{The inviscid model \eqref{Obukhov-inviscid} with $\alpha=1$, $\lambda=2$.
(a) The energy is conserved, while
$\|X(t)\|_{H^s}\to0$ as $t\to0+$ for every $s<0$, demonstrating the energy jump of Theorem~\ref{th:inviscid}, and with it the
failure of uniqueness against the zero solution.  (b)~$\|X(t)\|_{H^r}$ for
$0<r<\alpha$, against the blow-up rate \eqref{inviscid-blowup-rate} (dashed).
(c)~The reversed solution $Y(t)=-X(T-t)$ of
Corollary~\ref{cor:inviscid-forward-blowup}: as $t\to T-$ the energy travels
through arbitrarily high shells.}
\label{fig:inviscid}
\end{figure}

In the dyadic setting (i.e., for solutions of \eqref{Obukhov}), we obtain the claimed examples of instantaneous blow-up via a Lyapunov function argument. These arguments have precedent in work by the first author on finite-time blow-up for the viscous DN model~\cite{MR2415066}. The details are supplied in Section~\ref{sec:first_approach}. Clearly, the true Navier--Stokes equations do not admit Lyapunov functions of this type, so this method cannot be expected to extend to, say, a resolution of Conjecture~\ref{conjecture}. To assuage this concern, we outline in Section~\ref{sec:second_approach} an alternate construction that does not face obstructions of this type. This second approach appears more robust and has been successfully applied in the PDE setting \cite{MR5008166,cheskidov2025instantaneous}, for instance.

\subsection{Mixed cascade and finite-time blow-up}\label{sec:mixed}

Consider the mixed Desnyansky--Novikov--Obukhov (DN-O) model
\begin{equation} \label{DNO}
X_k'= - \nu N_k^2X_k+\eta(N_{k-1}^\alpha X_{k-1}^2-N_k^\alpha X_kX_{k+1}) - N_k^\alpha X_{k+1}^2 + N_{k-1}^\alpha X_{k-1} X_k,
\end{equation}
for $k=1,2, \dots$, and $X_0=0$. We take $\nu=1$, $\alpha \geq 1$,  and $\eta=10^{-5}$. The frequency is $N_k=2^k$. Note that the DN terms have a small factor $\eta$ in front. We also point out that the sign chosen for Obukhov part in \eqref{DNO} is consistent with the conventional Obukhov model, but different from \eqref{Obukhov} above. 

\begin{theorem}[Finite-time blow-up]
\label{th:DNO}
There exists $\varepsilon >0$ such that for any $\alpha \in(\frac52 - \varepsilon, \frac52 + \varepsilon)$, there exists initial data $X(0)\in \ell^2$ supported on finitely many modes such that the DN-O model \eqref{DNO} has 
a smooth solution $X(t)$ on $[0,T)$ with $T<\infty$ satisfying
\[
 \lim_{t\to T}\|X(t)\|_{H^s}=\infty,
 \qquad \mbox{for any} \quad s> s_0
\]
for an $s_0\in (0,1/2)$.
\end{theorem}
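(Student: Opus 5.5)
We argue by a self-similar forward cascade, closed by a bootstrap around an approximately self-similar profile. The energy identity for \eqref{DNO} is $\frac{d}{dt}\frac12\|X\|_{\ell^2}^2=-\|X\|_{H^1}^2$: the DN terms telescope, and so do the Obukhov terms (the term $-N_k^\alpha X_kX_{k+1}^2$ cancels against $N_k^\alpha X_kX_{k+1}^2$ coming from shell $k+1$). Energy is therefore bounded, and blow-up must come from transfer to high shells outrunning dissipation. This is possible only when $\alpha>2$, the energy-supercritical regime, and $\alpha$ near $5/2$ gives room.

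\textbf{Step 1: reduced cascade.} Make the change of variables $X_k(t)=N_k^{-\beta}Y_k(\tau)$ with a time reparametrization. We look for a traveling-wave (front) solution in which the mass is concentrated on a few consecutive shells and moves one shell upward per step. The time spent on shell $k$ is $\sim N_k^{-\alpha}X_k^{-1}$, a geometric series in $k$, so the blow-up time is finite. The sign of the Obukhov part in \eqref{DNO} means that the term $N_{k-1}^\alpha X_{k-1}X_k$ amplifies $X_k$ when $X_{k-1}X_k>0$. This is a forward transfer mechanism, but on its own it cannot initiate a new shell from zero. The small DN term $\eta N_{k-1}^\alpha X_{k-1}^2$ seeds $X_k$ from zero, and the Obukhov term then amplifies it exponentially at rate $N_{k-1}^\alpha X_{k-1}$. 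Meanwhile $-N_{k-1}^\alpha X_k^2$ drains $X_{k-1}$. Hence each handoff takes time $\approx \log(1/\eta)/(N_{k-1}^\alpha X_{k-1})$.

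\textbf{Step 2: a finite-dimensional map.} Normalize the shell-$(k-1)$ profile and treat a single handoff as a two- or three-mode ODE, with the dissipation $N_k^2$ entering as a small perturbation once the amplitudes are large. The scaling $X_k\sim N_k^{-\beta}$ relates the dissipative factor $N_k^{2}$ to the nonlinear rate $N_k^{\alpha-\beta}$. We need $\alpha-\beta>2$ along the cascade, so that dissipation is negligible, and $\beta>0$, so that the total time is finite. Energy monotonicity together with a bounded fraction of energy transferred per step forces $\beta$ slightly positive. The map from the normalized state at one handoff to the state at the next is then a contraction near a fixed point that can be computed explicitly with the specific $\eta=10^{-5}$, $\lambda=2$ and $\alpha=5/2$. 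The fixed point yields $\beta=\beta(\alpha)$, and $s_0=\beta$ lies in $(0,1/2)$. Continuity in $\alpha$ then gives the neighborhood $(\tfrac52-\varepsilon,\tfrac52+\varepsilon)$.

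\textbf{Step 3: bootstrap.} Take data supported on the first $K$ modes, with $K$ large and the data close to the fixed-point profile. We show inductively that at the $k$-th handoff the normalized state stays in a small ball around the fixed point. The errors come from dissipation, which is relatively $O(N_k^{2-\alpha+\beta})$ and summable, and from the tail modes, which are seeded only at order $\eta$. It follows that $t_k$ converges to some $T<\infty$ and $|X_k(t_k)|\gtrsim N_k^{-\beta}$, so $\|X(t)\|_{H^s}\to\infty$ for all $s>\beta=s_0$. Smoothness on $[0,T)$ holds because every mode ahead of the front is exponentially small.

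The main obstacle is Step 2: proving that the handoff map has an attracting fixed point with $\beta\in(0,1/2)$. Because $\eta$ is a specific small number, this likely requires a computer-assisted or carefully quantified asymptotic analysis in $\eta$, including the $\log(1/\eta)$ delays. I would first try the asymptotics $\eta\to0$, where the handoff reduces to a logistic-type two-mode system with explicit solutions, and obtain the contraction from the explicit solution formula.
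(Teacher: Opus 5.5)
Your architecture matches the mechanism the paper sketches: DN seeding, then exponential Obukhov amplification, renormalization by the active amplitude and the time scale $N_k^\alpha M_k$, dissipation perturbative exactly when $s_0<s_c=\alpha-2$, and a bootstrap along a discretely self-similar cascade. The paper likewise defers the actual proof to a companion paper.

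\textbf{The main gap.} It is the step you flag yourself: producing a self-sustaining cascade whose per-shell retention $R=2^{-s_0}$ exceeds $2^{-(\alpha-2)}$. The route you propose for it, asymptotics as $\eta\to0$, cannot work.

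Write the active pair as $(X_{k-1},X_k)=\sqrt E(\cos\theta,\sin\theta)$. Ignoring $O(\eta)$ terms and the other shells, $\theta'=N_{k-1}^\alpha X_k$, while $X_{k+1}'\approx N_k^\alpha X_k(X_{k+1}+\eta X_k)$. So the next shell is amplified by $\exp(\int N_k^\alpha X_k\,dt)=\exp(2^\alpha\Delta\theta)$, independently of the amplitude. Starting from its DN seed, it overtakes $X_k$ only at an angle $\theta_*\approx2^{-\alpha}\log(2^\alpha/\eta)$, and heuristically $R\approx\sin\theta_*$.

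For $\eta=10^{-5}$ and $\alpha=5/2$ this gives $\theta_*\approx2.3$. So the trailing mode has already turned negative, and $\theta_*$ sits just below $3\pi/4$, where $\sin\theta_*=2^{-1/2}=2^{-(\alpha-2)}$. This is consistent with the paper's $2^\alpha R\approx4.1$ and with the blow-up being only slightly Type~II.

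As $\eta\to0$, $\theta_*$ first passes $3\pi/4$; then $\beta>\alpha-2$ and dissipation wins. It then passes $\pi$: the $\sech$-pulse in $X_k$ hands all its energy back to the negative $X_{k-1}$ before $X_{k+1}$ is activated. This is the Kiselev--Zlato\v s stalling the paper describes. The explicit two-mode solutions describe exactly this full rotation, so they cannot yield your fixed point.

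The theorem therefore lives in a window $\log(1/\eta)\sim 2^\alpha$ and must be proved quantitatively at the given $\eta$. One way is to exhibit, with validated numerics, a trapping region for the renormalized map on which each transfer retains a fraction at least $2^{-s_0}$ with $s_0<\alpha-2$. A contraction onto an exact fixed point is more than you need. The paper's phrasing (``each transfer retaining a sufficient fraction'') and the gap between its stated $s_0=0.4838$ and the simulated $0.4492$ suggest a non-sharp bound of this kind.

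\textbf{The modes behind the front.} Your reduction to a two- or three-mode handoff, with errors only from dissipation and from modes ahead of the front, omits an $O(1)$ effect. After each transfer the trailing mode is left at $X_{k-1}\approx\sqrt E\cos\theta_*<0$ and stays roughly frozen (the downward spikes in Figure~\ref{fig:cascade-mixed}). Through $N_{k-1}^\alpha X_{k-1}X_k$ it damps the active mode at a fixed fraction, about $2^{-\alpha}|\cot\theta_*|$, of the active rate $N_k^\alpha X_k$. Older frozen modes contribute with weights decaying like $(2^\alpha R)^{-j}$.

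These terms are scale-invariant, so they do not decay along the cascade and cannot go into the error budget. They help set $R$, and they must be part of the renormalized state, i.e.\ all $b_j=X_{n+j}/M$ with $j\leq0$, as in the paper's renormalized integration.

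\textbf{Two smaller points.}
First, finiteness of $T\sim\sum_kN_k^{\beta-\alpha}$ needs $\beta<\alpha$, not $\beta>0$; positivity of $\beta$ comes from the energy left behind.
Second, a lower bound at the handoff times $t_k$ gives only a $\limsup$. To get the full limit, use that the frozen modes keep $|X_k(t)|\gtrsim N_k^{-\beta}$ on $[t_{k+1},T)$.
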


\begin{remark}
    The intermittency parameter $\alpha=5/2$ is a key threshold for viscous dyadic models of the Navier--Stokes equations in three dimensions specifically. This value corresponds to the maximal intermittency for a 3D fluid, for example in the case of a self-similar solution with parabolic similarity variable $\xi=x/t^\frac12$. Moreover, with $\alpha=5/2$, the critical Sobolev space for the dyadic model becomes $H^\frac12$, in agreement with the PDE setting.
\end{remark}

\begin{remark}
\label{rem:DNO-typeII}
The scaling-critical Sobolev exponent of \eqref{DNO} is $s_c=\alpha-2$, so
the blow-up of Theorem~\ref{th:DNO} is of Type~I or Type~II according to whether
$s_0>s_c$ or $s_0<s_c$. Since $s_0<\tfrac12$ and $s_c=\tfrac12$ at
$\alpha=\tfrac52$, the theorem is just \emph{slightly} Type~II. Equivalently, the critical norm $\sup_kN_k^{\alpha-2}|X_k|$ grows by a
factor $2^{\alpha-2}R$ per shell, which for the computed solution at
$\alpha=2.4906$, gives
\[
 \sup_kN_k^{\alpha-2}|X_k(t)|\sim(T-t)^{-0.020},
 \qquad t\to T-.
\]
This should be contrasted with the inverse cascade of
Theorem~\ref{th:alpha>2}, where the same quantity grows like
$t^{-(\alpha-2)/\alpha}$ (Figure~\ref{fig:type1-type2}).
\end{remark}

\begin{figure}[ht]
\centering
\makebox[\textwidth][c]{
\includegraphics[width=1.1\textwidth]{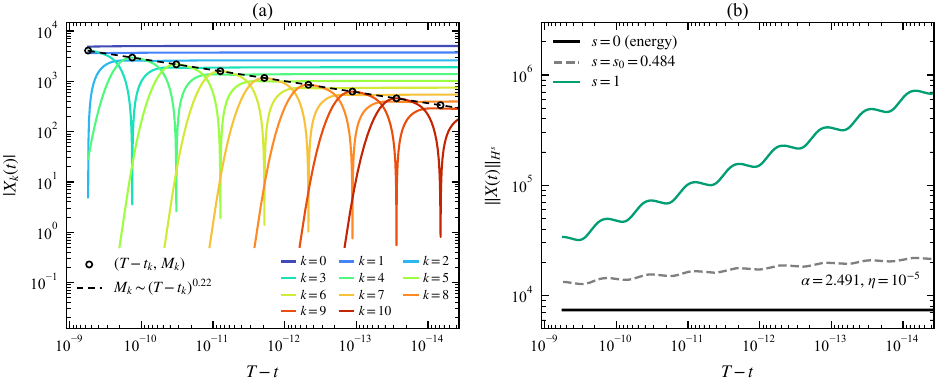}}
\caption{The blow-up cascade generated by the finite-support data of
Theorem~\ref{th:DNO} at $\alpha=2.4906$. (a)~Each curve is
one shell $|X_k|$.  The downward spikes are the sign changes at
the shell-to-shell transfers, after which a mode becomes negative and remains approximately frozen. The circles mark the entry of each shell, $(T-t_k,M_k)$. The dashed line shows the exponentially decaying
transfer times and energies as $t$ approaches the blow-up. (b)~The resulting Sobolev norms, with $s_0=0.4838$ the threshold of Theorem~\ref{th:DNO}. Above $s_0$ the norm grows geometrically in the scale. (The simulation in fact shows a slightly better threshold $s>0.4492$.)}
\label{fig:cascade-mixed}
\end{figure}

We explain in the following the main mechanism of the blow-up construction
through the mixed DN and Obukhov cascade; the full proof of Theorem~\ref{th:DNO} will be given in a forthcoming companion paper. There are two known regularization mechanisms for dyadic models, and it is remarkable that \eqref{DNO} is able to evade both without any forcing. First, for a pure DN model, there is the regularization effect from turbulent cascade for $\alpha\leq5/2$ proved in \cite{MR2844828}, and even for $\alpha\leq 3$ (when $\lambda$ is large enough) recently proved in \cite{arXiv:2609.21312}, reaching the blow-up threshold of \cite{MR2415066}. For a pure Obukhov model, there is regularization due to excessively high amplitudes in the cascade, discovered in \cite{MR2180809}. A large amplitude leading the cascade causes the inverse cascade term (that is, $-N_k^\alpha X_{k+1}^2$) to deplete the trailing mode until it becomes negative; these negative modes eventually become strong enough to suspend the cascade. Finite-time blow-ups demonstrated previously avoid these obstacles either by considering a highly engineered model \cite{MR3486169} or by introducing a force \cite{palasek2026finite}.

The blow-up is asymptotically discretely self-similar, so it suffices to explain the energy transfer between a particular pair of modes. As a first approximation, assume the interaction between $X_k$ and $X_{k+1}$ begins with $X_k\sim M$ and $X_{k+1}=0$. The main growth is exponential and results from the forward Obukhov term
$N_k^\alpha X_kX_{k+1}$ in the equation for $X_{k+1}$. Being linear in
$X_{k+1}$, this term can only amplify once $X_{k+1}$ is ``nudged'' away from zero.\footnote{One might imagine this role could be played instead by some small but non-zero initial data for $X_{k+1}$. This strategy fails due to the viscosity acting over the full lifetime of the solution. In the inviscid case, there is an obstacle in the form of the Kiselev--Zlato\v s regularity mechanism: $X_{k+1}(0)$ must be very small for the data to be smooth, but then the energy transfer takes too long, leading to modes becoming negative.} This nudge of $X_{k+1}$ is enacted by the forward DN cascade term $\eta N_{k}^\alpha X_{k}^2$.

Roughly speaking, energy moves to higher modes, with each transfer retaining a sufficient fraction of the active amplitude and the transfer times decreasing geometrically. To isolate the roles of the two types of nonlinearities, we restrict attention to the interactions of two neighboring shells. Fix index $k$ and let
\[
y:=X_k, \qquad z:=X_{k+1},\qquad \mu =N_k^\alpha
\]
Ignoring the dissipation and interaction with other modes, we have
\[
y'=-\mu z^2-\eta\mu yz, \qquad z'=\mu yz+\eta\mu y^2
\]
which conserves $y^2+z^2$ exactly. The transfer is therefore a rotation
along the circle $y^2+z^2=E$, and blow-up occurs if a definite fraction of
the energy is transferred before the neighboring shells significantly interfere. Starting from $y=M>0$ and
$z=0$, the Obukhov forward cascade term $\mu yz$ cannot cause any growth; instead it is the DN forward cascade
term $\eta\mu y^2$ that lifts $z$ off zero. Once $z>0$, the Obukhov terms
amplify $z$ exponentially while depleting $y$, and the DN terms begin
preparing the next shell. The ratio of the DN source to the Obukhov
amplification in the $z$ equation is
\[
\frac{\eta\mu y^2}{\mu yz}=\frac{\eta y}{z},
\]
so the DN term matters only while $z\lesssim\eta y$; thereafter Obukhov
amplification dominates. The resulting dynamics are shown in
Figure~\ref{fig:cascade-mixed}.

The DN and Obukhov terms work together to produce a frequency-localized cascade, resembling Tao's delay mechanism~\cite{MR3486169}. This localization prevents the energy from spreading in frequency, which is a known regularizing effect of the nonlinear term~\cite{MR3415578}. Figure~\ref{fig:DN-comparison} illustrates the difference: for the pure DN model the energy disperses across scales until the dissipation takes over, whereas the mixed cascade keeps a sharp front and carries higher Sobolev norms to large frequencies. The result is blow-up in the 3D Navier--Stokes regime $\alpha=5/2$, where the DN and Obukhov models are each known to be regular when considered separately.

\begin{figure}[ht]
\centering
\makebox[\textwidth][c]{
\includegraphics[width=1.1\textwidth]{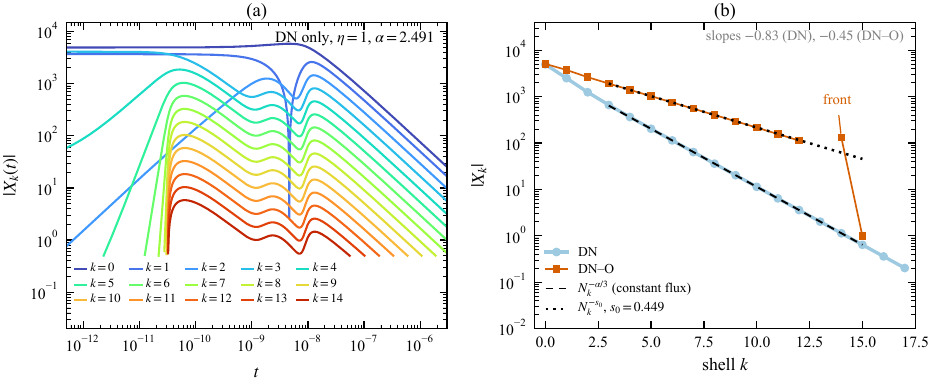}}
\caption{(a)~The turbulent cascade for the DN model. Without a delay mechanism, energy disperses across scales until the dissipation takes over. (b)~A comparison of the energy spectrum for the DN and the DN-O models. The spectrum is steeper for DN, with no sharp front as in the mixed DN-O case. The DN-O cascade is able to carry higher norms to large frequencies, meaning it can withstand larger dissipation.}
\label{fig:DN-comparison}
\end{figure}

Our interest in the DN-Obukhov model is that its blow-up mechanism partially resembles the one recently demonstrated by OpenAI for the 3D forced Navier--Stokes equations in \cite{openai2026}. Consecutive shells may be viewed as vortex components at successively finer spatial scales.  
The DN interaction in the dyadic model corresponds to self-similar creation of finer vortex components, while the Obukhov interaction captures amplification of the pulses in \cite{openai2026}. In this process, nonlinear transfer occurs from a coarser background structure into a finer background structure, canceling errors from the self-similar progression of vortex concentration through scales. In this sense, the construction in \cite{openai2026} combines DN-type cascade with an Obukhov-type instability whose stress feedback helps sustain the core vortex background. Therefore, our dyadic model captures aspects of self-similar cascade through the DN interaction, and pulse amplification and convex integration-like cancellation through the Obukhov nonlinearity. 
We acknowledge that the dyadic model suppresses the spatial structure of the Navier--Stokes equations. 
We note that the force plays a prominent role in \cite{openai2026}, not least by creating the pulses, while there is no external forcing in our dyadic setting.

\textbf{Organization of the paper.} The remainder of the
paper concerns the inverse cascade. Section~\ref{sec:first_approach} proves
Theorems~\ref{th:alpha<2}, \ref{th:critical-blowup} and \ref{th:alpha>2} by a
Lyapunov function argument, Section~\ref{sec:second_approach} outlines the
alternative multiscale construction in the supercritical case, and
Section~\ref{s:inviscid} treats the inviscid model, proving
Theorem~\ref{th:inviscid} and Corollary~\ref{cor:inviscid-forward-blowup}.

\section{First approach: Galerkin approximation with energy flux and a Lyapunov function}\label{sec:first_approach}

We begin with the following elementary observation.

\begin{lemma}\label{l:positivity}
If $X_k(0)\geq 0$ for all $k\geq 1$, the components of the solution to \eqref{Obukhov} remain non-negative for all time.  
\end{lemma}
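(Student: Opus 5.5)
The plan is to exploit the structure of \eqref{Obukhov}: in the equation for $X_k$, every term except $N_k^\alpha X_{k+1}^2$ is linear in $X_k$, and that remaining source term is non-negative. Explicitly,
\[
X_k' = a_k(t)\,X_k + N_k^\alpha X_{k+1}^2,\qquad a_k(t):=-\nu N_k^2 - N_{k-1}^\alpha X_{k-1}(t).
\]
Integrating with the factor $\exp(-\int_0^t a_k)$ gives the variation-of-constants formula
\[
X_k(t)=e^{\int_0^t a_k}X_k(0)+\int_0^t e^{\int_s^t a_k}\,N_k^\alpha X_{k+1}(s)^2\,ds .
\]
The right-hand side is non-negative whenever $X_k(0)\geq0$, with no sign information required on $X_{k-1}$ or $X_{k+1}$. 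This is a single formula per mode, not an induction.

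The work is in justifying the formula, which needs each $X_k$ to be absolutely continuous in time with $a_k$ locally integrable. I would state the lemma for the class of solutions used in the section: either Galerkin truncations, or componentwise $C^1$ solutions on the existence interval. For the Galerkin system with modes $k\le n$ and $X_{n+1}\equiv0$, the system is a finite-dimensional ODE with locally Lipschitz right-hand side, so the solution is $C^1$ on its maximal interval. On that interval the formula applies to each $k\le n$; for $k=n$ the source vanishes, which only helps. Positivity therefore holds on the maximal interval, and in the limit for any solution obtained as a componentwise limit of Galerkin approximations, since non-negativity passes to pointwise limits.

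Global existence of the truncation, needed so that ``for all time'' is meaningful, follows from the energy identity. The nonlinear terms $\sum_k X_k(N_k^\alpha X_{k+1}^2 - N_{k-1}^\alpha X_{k-1}X_k)$ telescope to zero, so
\[
\tfrac{d}{dt}\|X\|_{\ell^2}^2=-2\nu\|X\|_{H^1}^2\le0 ,
\]
which rules out blow-up of the truncation.

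I expect no step to be a genuine obstacle. The only subtlety is regularity: for a weak solution of the infinite system one must know that each $X_k$ is absolutely continuous and satisfies \eqref{Obukhov} almost everywhere, which holds because each component equation involves only finitely many continuous components. Given that, the integrating-factor formula is valid verbatim.
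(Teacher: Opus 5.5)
Your proposal is correct and follows the paper's proof: both write the $k$-th equation as a linear ODE $X_k'+(\nu N_k^2+N_{k-1}^\alpha X_{k-1})X_k=N_k^\alpha X_{k+1}^2$ with non-negative forcing, and conclude mode by mode from the variation-of-constants formula, with no sign information needed on neighbouring modes. Your added remarks on regularity, the Galerkin truncation, and global existence via the telescoping energy identity are accurate. The paper does not include them in this proof; it records the energy identity separately, for the Galerkin system \eqref{eq:galerkin}.
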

\begin{proof}
For each $k$, rewrite \eqref{Obukhov} as
\[
X_k'+\bigl(\nu N_k^2+N_{k-1}^\alpha X_{k-1}\bigr)X_k
=N_k^\alpha X_{k+1}^2.
\]
The variation-of-constants formula preserves non-negativity, since both the
initial value and the forcing on the right-hand side are non-negative.
\end{proof}

We consider the following finite-dimensional Galerkin approximation with energy flux. Let $F_K\geq 0$. For $K\geq 1$, let $X^K=(X_1^K,\dots,X_{K+1}^K)$ be the solution to
\begin{equation}\label{eq:galerkin}
\begin{cases}
\displaystyle
\frac{d}{dt}X_k^K
=
-\nu N_k^2X_k^K
+N_k^\alpha (X_{k+1}^K)^2
-N_{k-1}^\alpha X_{k-1}^KX_k^K,
&1\leq k\leq K+1,\\[0.4em]
X_0^K=X_{K+2}^K=0,\\
X_k^K(0)=F_K\delta_{k,K+1}
\end{cases}
\end{equation}
where $\delta_{i,j}$ denotes the Kronecker delta. By Lemma~\ref{l:positivity}, $X_k^K(t)\geq0$ for all $k$. The nonlinear terms telescope, giving
\begin{equation}\label{Galerkin-energy-general}
\|X^K(t)\|_{\ell^2}^2
+
2\nu\int_0^t
\sum_{k=1}^{K+1}N_k^2(X_k^K(\tau))^2\,d\tau
=
F_K^2.
\end{equation}
In particular, the Galerkin solution exists globally.

The main ingredient in the proof below is a rapidly decreasing Lyapunov function. For $0<q<\alpha$, define
\begin{equation}\label{def-HqK}
H_{q,K}(t)
=
\sum_{k=1}^{K+1}N_k^{2q}(X_k^K(t))^2
-c\sum_{k=1}^{K}N_k^{2q}X_k^K(t)X_{k+1}^K(t),
\end{equation}
where $c>0$ will be chosen sufficiently small in the following lemma.
We set
\[
p=\alpha + 2q.
\]

Direct computations yield the following.

\begin{lemma}\label{le-Lyapunov}
Let $0<q<\alpha$, $T>0$ and $X_k^K(t)\geq 0$ for all $1\leq k\leq K+1$. Suppose that
\[
\sup_K\sup_{0\leq t\leq T}X_1^K(t)\leq M.
\]
For $c>0$ sufficiently small, depending on $\alpha,\lambda,\nu,q,M$ but not on
$K$, one has
\[
H_{q,K}(t)\geq0,
\qquad
H_{q,K}'(t)\leq - C H_{q,K}(t)^{3/2},
\qquad 0\leq t\leq T,
\]
where $C=C(\alpha,\lambda,\nu,q,M)>0$ is independent of $K$.
\end{lemma}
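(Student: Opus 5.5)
The plan is to differentiate $H_{q,K}$ along \eqref{eq:galerkin}, show that $-H_{q,K}'$ controls a weighted cubic sum $\sum_k N_k^{p}(X_k^K)^3$, and then bound that cubic sum from below by $H_{q,K}^{3/2}$ using the slack $\alpha>q$. I drop the superscript $K$ and write $X_k$ throughout.

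\emph{Positivity and equivalence.} By Cauchy--Schwarz,
\[
N_k^{2q}X_kX_{k+1}\leq \tfrac12 N_k^{2q}X_k^2+\tfrac12\lambda^{-2q}N_{k+1}^{2q}X_{k+1}^2.
\]
Hence for $c\leq 1$,
\[
\tfrac12 E_q\leq H_{q,K}\leq 2E_q,\qquad E_q:=\sum_k N_k^{2q}X_k^2 .
\]
In particular $H_{q,K}\geq0$. It therefore suffices to prove $H_{q,K}'\leq -C E_q^{3/2}$.

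\emph{Derivative of the diagonal part.} The diagonal part gives
\[
-2\nu\sum_k N_k^{2q+2}X_k^2+2\sum_k N_k^{p}X_kX_{k+1}^2-2\sum_k N_{k-1}^{\alpha}N_k^{2q}X_{k-1}X_k^2 .
\]
After shifting the last sum by one index, this equals
\[
-2\nu\sum_k N_k^{2q+2}X_k^2-2(\lambda^{2q}-1)\sum_k N_k^{p}X_kX_{k+1}^2 .
\]
Both terms are good, but neither is coercive on its own, since $X_k$ may vanish.

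\emph{Derivative of the cross part.} Differentiating $-cN_k^{2q}X_kX_{k+1}$ produces four kinds of terms.
\begin{itemize}
\item The source $N_k^\alpha X_{k+1}^2$ in $X_k'$ gives the key coercive term $-c\sum_k N_k^{p}X_{k+1}^3$.
\item The source $N_{k+1}^\alpha X_{k+2}^2$ in $X_{k+1}'$ gives a term of good sign, which I discard.
\item The term $-N_{k-1}^\alpha X_{k-1}X_k$ in $X_k'$ gives the bad term $+cN_k^{2q}N_{k-1}^\alpha X_{k-1}X_kX_{k+1}$.
\item The term $-N_k^\alpha X_kX_{k+1}$ in $X_{k+1}'$ gives the bad term $+cN_k^{p}X_k^2X_{k+1}$.
\end{itemize}
The viscous contributions are $c\nu N_k^{2q}(N_k^2+N_{k+1}^2)X_kX_{k+1}$. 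For $c$ small they are absorbed by $\nu\sum_k N_k^{2q+2}X_k^2$ via Cauchy--Schwarz.

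\emph{Absorbing the bad terms.} For the second bad term I use
\[
X_k^2X_{k+1}\leq (X_kX_{k+1}^2)^{1/2}X_k^{3/2}.
\]
Young's inequality then gives
\[
cN_k^{p}X_k^2X_{k+1}\leq (\lambda^{2q}-1)N_k^pX_kX_{k+1}^2+C c^2N_k^pX_k^3 .
\]
When $k\geq2$, the last term is absorbed by the coercive term at index $k-1$, namely $c\lambda^{-p}N_k^pX_k^3$, provided $c\ll\lambda^{-p}$. When $k=1$, there is no such coercive term, and this is where the hypothesis $X_1\leq M$ enters. I bound $Cc^2N_1^pX_1^3\leq Cc^2MN_1^pX_1^2$ and absorb it into the viscous term $\nu N_1^{2q+2}X_1^2$, again for $c$ small depending on $M,\nu,\lambda$.

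The three-mode bad term is handled the same way. Positivity and Young's inequality split it into $X_{k-1}X_k^2$ and $X_{k-1}X_{k+1}^2$ pieces plus cubic pieces. These are dominated by the good terms $(\lambda^{2q}-1)N^pX_kX_{k+1}^2$ and $cN^pX^3$ at neighbouring indices, with a factor $c$ to spare. Doing this bookkeeping uniformly in $k$ and $K$ is the step I expect to be the main obstacle, because the constants must depend only on $\alpha,\lambda,\nu,q,M$.

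\emph{Conclusion.} The outcome is
\[
H_{q,K}'\leq -\tfrac{c}{2}\sum_{k\geq2}N_{k-1}^{p}X_k^3-\nu\sum_k N_k^{2q+2}X_k^2 .
\]
The $k=1$ cubic term is recovered from the dissipation together with $X_1\leq M$. To finish, set $a_k=N_k^{q}X_k$, so that $N_k^pX_k^3=N_k^{\alpha-q}a_k^3$. Hölder's inequality gives
\[
\sum_k a_k^2\leq\Bigl(\sum_k N_k^{-2(\alpha-q)}\Bigr)^{1/3}\Bigl(\sum_k N_k^{\alpha-q}a_k^3\Bigr)^{2/3}.
\]
The first factor is a convergent geometric sum precisely because $q<\alpha$, and its bound is independent of $K$. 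Therefore $\sum_k N_k^pX_k^3\gtrsim E_q^{3/2}\gtrsim H_{q,K}^{3/2}$, which yields $H_{q,K}'\leq -CH_{q,K}^{3/2}$.
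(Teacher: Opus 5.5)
Your outline follows the paper's proof: the same functional, the same derivative identities, the same use of $X_1\le M$ with the first-shell dissipation, and the same H\"older step using $q<\alpha$. It can be completed. However, the step you flag as the main obstacle is justified incorrectly as written.

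The term $-c\lambda^{\alpha}\sum_k N_k^pX_kX_{k+2}^2$ that you discard is exactly what the paper uses at that step. It splits $c\lambda^{2q}N_{k-1}^pX_{k-1}X_kX_{k+1}$ into an $X_{k-1}X_k^2$ piece and an $X_{k-1}X_{k+1}^2$ piece, and absorbs the latter into half of that discarded term. Without it, the $X_{k-1}X_{k+1}^2$ piece cannot be controlled by the order-one terms $(\lambda^{2q}-1)N_j^pX_jX_{j+1}^2$ at all: if every mode except $X_{k\pm1}$ vanishes, those terms are all zero. So it has to go into cubes. There it carries a single factor $c$, exactly like the cubic good term $c\lambda^{-p}N_j^pX_j^3$. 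There is no ``factor $c$ to spare'', and a balanced Young split does not close in general.

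The repair is an unbalanced split with a large $\theta=\theta(\alpha,\lambda,q)$:
\[
X_{k-1}X_kX_{k+1}\le\tfrac{\theta}{2}X_{k-1}X_k^2+\tfrac{1}{2\theta}X_{k-1}X_{k+1}^2,
\qquad
X_{k-1}X_{k+1}^2\le\tfrac13X_{k-1}^3+\tfrac23X_{k+1}^3 .
\]
The first piece is absorbed by the order-one term $(\lambda^{2q}-1)N_{k-1}^pX_{k-1}X_k^2$ once $c\theta\lambda^{2q}\lesssim\lambda^{2q}-1$. The cubes then carry coefficients $O(c\lambda^{2q}/\theta)$, which fit under $c\lambda^{-p}$ once $\theta\gtrsim\lambda^{p+2q}$. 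The $X_1^3$ cube goes to $M$ and the dissipation as before. Simply keeping the $X_kX_{k+2}^2$ term, as the paper does, is cleaner.

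Your treatment of $X_k^2X_{k+1}$, with only $c^2$ on the cubic remainder, does have a factor $c$ to spare. It is slightly tidier than the paper's split, which spends half of the cubic good term there.

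Minor: $H_{q,K}\ge\frac12E_q$ needs $c\le\frac12$, not $c\le1$.
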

\begin{proof}
We omit the superscript $K$. Recall that $N_k=\lambda^k$ for $k\geq0$.
By the Cauchy--Schwarz inequality,
\begin{equation*}
\begin{split}
c\sum_{k=1}^{K} N_k^{2q}X_k(t)X_{k+1}(t)
&=\ c\lambda^{-q}
\sum_{k=1}^{K}\left(N_k^{q}X_k(t)\right)
\left(N_{k+1}^{q}X_{k+1}(t)\right)\\
&\leq\ \frac12c\lambda^{-q}
\left(\sum_{k=1}^{K}N_k^{2q}X_k^2(t)+\sum_{k=1}^{K}N_{k+1}^{2q}X_{k+1}^2(t)\right)\\
&\leq\ c\lambda^{-q} \sum_{k=1}^{K+1}N_k^{2q}X_k^2(t).
\end{split}
\end{equation*}
Thus it follows from \eqref{def-HqK} that
\[
\sum_{k=1}^{K+1}N_k^{2q}X_k^2(t)
\geq H_{q,K}(t)
\geq
(1-c\lambda^{-q})
\sum_{k=1}^{K+1}N_k^{2q}X_k^2(t),
\]
provided $c>0$ is small enough. In particular,
\begin{equation}\label{H-norm-K}
H_{q,K}(t)\approx \sum_{k=1}^{K+1}N_k^{2q}X_k^2(t),
\end{equation}
with constants independent of $K$.

By H\"older's inequality,
\begin{equation}\label{H-norm-2K}
\begin{split}
\sum_{k=1}^{K+1}N_k^{2q}X_k^2(t)
=&\sum_{k=1}^{K+1} N_k^{\frac23(q-\alpha)}
\left(N_k^{\frac23(\alpha+2q)}X_k^2(t) \right)\\
\leq& \left( \sum_{k=1}^{K+1} N_k^{2(q-\alpha)}\right)^{\frac13}
\left(\sum_{k=1}^{K+1} N_k^{\alpha+2q}X_k^3(t)\right)^{\frac23}\\
\lesssim& \left(\sum_{k=1}^{K+1} N_k^pX_k^3(t)\right)^{\frac23},
\end{split}
\end{equation}
since $0<q<\alpha$.

Employing the Galerkin equation gives
\begin{equation}\label{est-H-1K}
\begin{split}
\frac{d}{dt}\sum_{k=1}^{K+1} N_k^{2q}X_k^2(t)&
=-2\nu \sum_{k=1}^{K+1} N_k^{2+2q}X_k^2(t)
+2\sum_{k=1}^{K} N_k^{\alpha+2q}X_k(t) X_{k+1}^2(t)\\
&\qquad-2\sum_{k=1}^{K+1}N_{k-1}^\alpha N_k^{2q}X_{k-1}(t)X_k^2(t)\\
&=-2\nu \sum_{k=1}^{K+1} N_k^{2+2q}X_k^2(t)
+2(1-\lambda^{2q})\sum_{k=1}^{K} N_k^pX_k(t) X_{k+1}^2(t).
\end{split}
\end{equation}
Moreover,
\begin{equation}\label{est-H-2K}
\begin{split}
\frac{d}{dt}\sum_{k=1}^{K} &N_k^{2q}X_k(t)X_{k+1}(t)\\
=&-\nu (1+\lambda^2)\sum_{k=1}^{K} N_k^{2+2q}X_k(t)X_{k+1}(t)\\
&+\sum_{k=1}^{K} N_k^pX_{k+1}^3(t)
-\sum_{k=1}^{K}N_{k-1}^\alpha N_k^{2q}X_{k-1}(t)X_k(t)X_{k+1}(t)\\
&+\lambda^\alpha\sum_{k=1}^{K} N_k^pX_k(t)X_{k+2}^2(t)
-\sum_{k=1}^{K} N_k^pX_k^2(t)X_{k+1}(t).
\end{split}
\end{equation}
Here the terms containing $X_0$ and $X_{K+2}$ vanish. Combining \eqref{est-H-1K} and \eqref{est-H-2K}, we obtain
\begin{equation}\label{est-H-3K}
\begin{split}
- H_{q,K}'(t)=&\ 2\nu \sum_{k=1}^{K+1} N_k^{2+2q}X_k^2(t)
-c\nu (1+\lambda^2)\sum_{k=1}^{K} N_k^{2+2q}X_k(t)X_{k+1}(t)\\
&+2(\lambda^{2q}-1)\sum_{k=1}^{K} N_k^pX_k(t) X_{k+1}^2(t)\\
&+c\sum_{k=1}^{K} N_k^pX_{k+1}^3(t)
+c\lambda^\alpha\sum_{k=1}^{K} N_k^pX_k(t)X_{k+2}^2(t)\\
&-c\sum_{k=1}^{K}N_{k-1}^\alpha N_k^{2q}X_{k-1}(t)X_k(t)X_{k+1}(t)
-c\sum_{k=1}^{K} N_k^pX_k^2(t)X_{k+1}(t).
\end{split}
\end{equation}
The positive cubic terms control the three adverse cubic terms, while the
mixed viscous term is absorbed by the viscous dissipation. Indeed,
\begin{equation*}
\begin{split}
c\nu (1+\lambda^2)&\sum_{k=1}^{K} N_k^{2+2q}X_k(t)X_{k+1}(t)\\
=&\ c\nu (1+\lambda^2)\lambda^{-1-q}
\sum_{k=1}^{K}\left(N_k^{1+q}X_k(t)\right)
\left(N_{k+1}^{1+q}X_{k+1}(t)\right)\\
\leq&\ c\nu (1+\lambda^2)\lambda^{-1-q}
\sum_{k=1}^{K+1}N_k^{2+2q}X_k^2(t).
\end{split}
\end{equation*}
Similarly, we estimate
\begin{equation*}
\begin{split}
c\sum_{k=1}^{K}&N_{k-1}^\alpha N_k^{2q}X_{k-1}(t)X_k(t)X_{k+1}(t)\\
=&\ c\sum_{k=1}^{K}\left(\lambda^{2q-\frac12\alpha}N_{k-1}^{\frac12\alpha+q}X_{k-1}^{\frac12}(t)X_k(t)\right)
\left(\lambda^{\frac12\alpha}N_{k-1}^{\frac12\alpha+q}X_{k-1}^{\frac12}(t)X_{k+1}(t)\right)\\
\leq &\ \frac12c\lambda^{4q-\alpha}
\sum_{k=1}^{K} N_{k-1}^p X_{k-1}(t)X_k^2(t)
+\frac12c \lambda^{\alpha}\sum_{k=1}^{K} N_{k-1}^p X_{k-1}(t)X_{k+1}^2(t)\\
\leq &\ \frac12c\lambda^{4q-\alpha}
\sum_{k=1}^{K} N_k^p X_k(t)X_{k+1}^2(t)
+\frac12c \lambda^{\alpha}\sum_{k=1}^{K} N_k^p X_k(t)X_{k+2}^2(t),
\end{split}
\end{equation*}
and
\begin{equation*}
\begin{split}
c\sum_{k=1}^{K}& N_k^pX_k^2(t)X_{k+1}(t)\\
=&\ c\sum_{k=1}^{K}\left(\lambda^{\frac12\alpha+q}N_k^{\frac12\alpha+q}X_k^{\frac12}(t)X_{k+1}(t)\right)
\left(N_{k-1}^{\frac12\alpha+q}X_k^{\frac32}(t)\right)\\
\leq &\ \frac12c\lambda^{\alpha+2q}\sum_{k=1}^{K} N_k^pX_k(t)X_{k+1}^2(t)
+\frac12c\sum_{k=1}^{K}N_{k-1}^pX_k^3(t)\\
\leq &\ \frac12c\lambda^{\alpha+2q}\sum_{k=1}^{K} N_k^pX_k(t)X_{k+1}^2(t)
+\frac12c\sum_{k=1}^{K}N_k^pX_{k+1}^{3}(t)+CcX_1^3(t).
\end{split}
\end{equation*}
Putting the estimates above together with \eqref{est-H-3K} yields
\begin{equation}\label{est-H-4K}
\begin{split}
- H_{q,K}'(t)\geq &\ \left(2\nu-c\nu(1+\lambda^2)\lambda^{-1-q}\right)
\sum_{k=1}^{K+1} N_k^{2+2q}X_k^2(t)\\
&+\left(2(\lambda^{2q}-1)-\frac12c\lambda^{4q-\alpha}-\frac12c\lambda^{\alpha+2q}\right)
\sum_{k=1}^{K} N_k^pX_k(t) X_{k+1}^2(t)\\
&+\frac12c\sum_{k=1}^{K} N_k^pX_{k+1}^3(t)
+\frac12c\lambda^\alpha\sum_{k=1}^{K} N_k^pX_k(t)X_{k+2}^2(t)
-CcX_1^3(t)\\
\geq &\ \nu \sum_{k=1}^{K+1} N_k^{2+2q}X_k^2(t)
+\frac12c\sum_{k=1}^{K} N_k^pX_{k+1}^3(t)
-CcX_1^3(t),
\end{split}
\end{equation}
provided $c>0$ is small enough so that
\begin{equation*}
\begin{split}
2\nu-c\nu(1+\lambda^2)\lambda^{-1-q}&>\nu,\\
2(\lambda^{2q}-1)-\frac12c\lambda^{4q-\alpha}-\frac12c\lambda^{\alpha+2q}&>0.
\end{split}
\end{equation*}
In the last line we used positivity. By assumption, $X_1(t)\leq M$ on
$[0,T]$, and hence
\[
X_1^3(t)\leq M X_1^2(t)
\lesssim_M \sum_{k=1}^{K+1} N_k^{2+2q}X_k^2(t).
\]
Decreasing $c$ if necessary, depending also on $M$, we obtain
\begin{equation}\label{est-H-5K}
-H_{q,K}'(t)\gtrsim \sum_{k=1}^{K+1} N_k^{2+2q}X_k^2(t)
+\sum_{k=1}^{K} N_k^pX_{k+1}^3(t).
\end{equation}
Moreover,
\[
\sum_{k=2}^{K+1}N_k^pX_k^3(t)
=
\lambda^p\sum_{k=1}^{K}N_k^pX_{k+1}^3(t),
\qquad
N_1^pX_1^3(t)\lesssim_M
\sum_{k=1}^{K+1}N_k^{2+2q}X_k^2(t).
\]
It then follows from \eqref{est-H-5K}, \eqref{H-norm-2K}, and \eqref{H-norm-K} that
\[
-H_{q,K}'(t)\gtrsim \left(\sum_{k=1}^{K+1} N_k^{2q}X_k^2(t)\right)^{3/2}
\gtrsim H_{q,K}(t)^{3/2}.
\]
\end{proof}

Whenever $H_{q,K}(0)>0$, integration of the differential inequality in
Lemma~\ref{le-Lyapunov} gives
\begin{equation}\label{Lyapunov-decay}
H_{q,K}(t)
\lesssim
\bigl(t+H_{q,K}(0)^{-1/2}\bigr)^{-2}.
\end{equation}
If the hypothesis on $X_1^K$ holds globally in time, then
\eqref{Lyapunov-decay} holds for every $t\geq0$.

\subsection{The energy-supercritical case}
\label{ss:supercritical}
In the case $\alpha>2$, the nonlinear effect dominates the linear one and hence it is possible to design a mechanism of complete energy transfer from high modes to low modes. The proof of the finite energy jump stated in Theorem~\ref{th:alpha>2} is more approachable compared to the other two cases.
\begin{proof}[Proof of Theorem~\ref{th:alpha>2}]

Take
\[
q:=\frac{\alpha+2}{2}\in(2,\alpha), \qquad
\gamma:=1-\frac2q=\frac{\alpha-2}{\alpha+2}>0.
\]
For each $K$, let $X^K$ be the solution to the Galerkin approximation \eqref{eq:galerkin} with $F_K=1$.
By Lemma~\ref{l:positivity} and energy balance \eqref{Galerkin-energy-general},
\[
0\leq X_k^K(t)\leq \|X^K(t)\|_{\ell^2}\leq1, \qquad t\geq0.
\]
Fix the coefficient $c>0$ in $H_{q,K}$ as given by Lemma~\ref{le-Lyapunov}. The lemma applies globally, with constants independent of $K$.
Since $H_{q,K}(0)=N_{K+1}^{2q}$, \eqref{H-norm-K} and
\eqref{Lyapunov-decay} imply
\[
\|X^K(t)\|_{H^q}^2\lesssim t^{-2},
\qquad t>0.
\]
Hence, by the weighted H\"older inequality, 
\[
\|X^K(t)\|_{H^1}^2 \leq \|X^K(t)\|_{H^q}^{2/q} \|X^K(t)\|_{\ell^2}^{2-2/q}
\lesssim t^{-2/q}.
\]
Since $q>2$, this estimate is integrable near the origin, so from the energy equality we get
\[
1-\|X^K(t)\|_{\ell^2}^2 = 2\nu\int_0^t\|X^K(\tau)\|_{H^1}^2\,d\tau
\lesssim \int_0^t\tau^{-2/q}\,d\tau \lesssim t^\gamma.
\]
Thus
\begin{equation}\label{supercritical-energy-ret}
1-Ct^\gamma \leq \|X^K(t)\|_{\ell^2}^2 \leq 1,
\qquad t\geq 0,
\end{equation}
where the constant $C$ is independent of $K$. 

For any $L\geq2$,
\[
\frac12\frac{d}{dt} \sum_{k=L}^{K+1}(X_k^K(t))^2 = -\nu\sum_{k=L}^{K+1}N_k^2(X_k^K)^2
-N_{L-1}^\alpha X_{L-1}^K (X_L^K)^2.
\]
Since the flux term is non-positive,
\begin{equation}\label{est-high-K}
\sum_{k=L}^{K+1}(X_k^K(t))^2\leq e^{-2\nu N_L^2t}\sum_{k=L}^{K+1}(X_k^K(0))^2\leq e^{-2\nu N_L^2t}.
\end{equation}

We now construct the limit solution $X(t)$. For each fixed number of modes, energy balance \eqref{Galerkin-energy-general} gives uniform
bounds on the components and their derivatives on every compact time
interval. Then using the Arzel\`a--Ascoli  theorem and passing to a diagonal subsequence, we infer that $X^K \to X$ in $C_w([0,1];\ell^2)$ for some weakly continuous in $\ell^2$ global solution of \eqref{Obukhov} $X(t)$. In other words, 
\begin{equation} \label{eq:Galerkin-conv}
X_k^K\to X_k \quad\text{locally uniformly on }[0,\infty)
\end{equation}
for every $k$ for $X_k \in C([0,\infty)$ satisfying \eqref{Obukhov}.

For every fixed $k$ and
$K\geq k$, positivity and energy balance \eqref{Galerkin-energy-general} imply
\[
(X_k^K)'\leq N_k^\alpha (X_{k+1}^K)^2\leq N_k^\alpha,
\qquad X_k^K(0)=0.
\]
Thus $X_k^K(t)\leq N_k^\alpha t$, and, passing to the limit, we obtain
\[
0 \leq X_k(t)\leq N_k^\alpha t.
\]
In particular, $X_k(t)\to0$ as $t\to0+$. Moreover, by \eqref{Lyapunov-decay}, for every $M$ and $t>0$,
\[
\sum_{k=1}^M X_k(t)^2 =
\lim_{K\to\infty}\sum_{k=1}^M (X_k^K(t))^2 \leq1,
\]
so $\|X(t)\|_{\ell^2}^2\leq1$.

Now for fixed $L\geq2$ and $t>0$, we split the energy of the Galerkin solution as
\[
\sum_{k=1}^{L-1}(X_k^K(t))^2
=\|X^K(t)\|_{\ell^2}^2-\sum_{k=L}^{K+1}(X_k^K(t))^2.
\]
By \eqref{supercritical-energy-ret} and \eqref{est-high-K}, we have
\[
\sum_{k=1}^{L-1}(X_k^K(t))^2
\geq 1-Ct^\gamma-e^{-2\nu N_L^2t}.
\]
Now let $X(t)$ be the limit obtained above (see \eqref{eq:Galerkin-conv}). Then
\[
\sum_{k=1}^{L-1}X_k(t)^2 \geq 1-Ct^\gamma-e^{-2\nu N_{L}^2t},
\qquad t>0.
\]
Letting $L\to\infty$ followed by $t\to0+$, we obtain
\[
\liminf_{t\to0+}\|X(t)\|_{\ell^2}^2\geq1.
\]
Together with $\|X(t)\|_{\ell^2}^2\leq1$, this yields
\[
\lim_{t\to0+}\|X(t)\|_{\ell^2}=1.
\]
For $s<0$ and $L\geq2$,
\[
\|X(t)\|_{H^s}^2
\leq \sum_{k<L}N_k^{2s}X_k(t)^2+N_L^{2s}\|X(t)\|_{\ell^2}^2.
\]
First let $t\to0+$ and then $L\to\infty$ to obtain
\[
\lim_{t\to0+}\|X(t)\|_{H^s}=0.
\]
Finally, for each $M\geq L$, \eqref{eq:Galerkin-conv} and \eqref{est-high-K} imply
\[
\sum_{k=L}^{M}X_k(t)^2 = \lim_{K\to\infty}\sum_{k=L}^{M}(X_k^K(t))^2
\leq e^{-2\nu N_L^2t}.
\]
Taking $M\to\infty$, we obtain
\[
\sum_{k\geq L}X_k(t)^2\leq e^{-2\nu N_L^2t},
\qquad L\geq2,
\]
for every $t>0$. Therefore, for every $s>0$ we obtain the smallness of $H^s$-tails:
\[
\sum_{k\geq L}N_k^{2s}X_k(t)^2 \lesssim_s
\sum_{j\geq L}N_j^{2s}\sum_{k\geq j}X_k(t)^2
\lesssim_s \sum_{j\geq L}N_j^{2s}e^{-2\nu N_j^2t}.
\]
In particular $X(t)\in H^s$ for every $s\geq0$ and $t>0$. Moreover, by continuity of $X_k(t)$ for each $k$, we have
\[
X\in C((0,\infty);H^s), \qquad s \in \mathbb{R}.
\]
\end{proof}

\begin{remark}[Sobolev size as $t\to0+$]\label{rem-Hs-rates}
For every $2<\sigma<\alpha$, the Lyapunov estimate gives
\[
\|X(t)\|_{H^\sigma}\lesssim_\sigma t^{-1}.
\]
Interpolating with $\|X(t)\|_{\ell^2}\leq1$ and using 
\[
\sum_{k\geq L}X_k^2(t)\leq e^{-2\nu N_L^2t},
\]
one obtains, for every $\varepsilon>0$,
\[
\|X(t)\|_{H^s} \lesssim_{s,\varepsilon}
\begin{cases}
t^{-s/\alpha-\varepsilon},
&0<s\leq\alpha,\\[0.3em]
t^{-(s-\alpha+2)/2-\varepsilon},
&s>\alpha.
\end{cases}
\]

Conversely, the estimate $X_k(t)\leq N_k^\alpha t$, together with
$\|X(t)\|_{\ell^2}\to1$, shows that a fixed fraction of the energy lies at
frequencies $N_k\gtrsim t^{-1/\alpha}$. Hence, for every $s>0$,
\begin{equation}\label{Hs-lower}
\|X(t)\|_{H^s}\gtrsim_s t^{-s/\alpha},
\qquad t\to0+.
\end{equation}
Thus the bounds are sharp up to the arbitrarily small loss
$t^{-\varepsilon}$ when $0<s\leq\alpha$. For $s>\alpha$, the present
argument leaves a gap.\footnote{Non-rigorous numerics indicate the stronger bound $t^{-s/\alpha}$ holds even for $s>\alpha$, suggesting this gap is merely an artifact of the proof.}
\end{remark}

\subsection{The energy-subcritical case}
\label{ss:subcritical}

Let $1\leq\alpha<2$ and denote
\[
\beta=2-\alpha>0.
\]
We assume in this subsection that $\lambda^\beta>2$. Take
\[
a=\lambda^{2\beta}, \qquad b=\lambda^{-2}.
\]
For $A>0$, let $X^{K,A}$ denote the solution of
\eqref{eq:galerkin} with
\[
F_K=A N_{K+1}^{\beta},
\]
and define
\[
Y_k^{K,A}=N_k^{-\beta}X_k^{K,A}.
\]
Then, with $Y_0^{K,A}=Y_{K+2}^{K,A}=0$, we have
\begin{equation}\label{normalized-subcritical}
(Y_k^{K,A})' = N_k^2\left(-\nu Y_k^{K,A} + a(Y_{k+1}^{K,A})^2
-bY_{k-1}^{K,A}Y_k^{K,A}\right)
\end{equation}
and $Y_{K+1}^{K,A}(0)=A$.

The following elementary estimate shows an inverse energy cascade mechanism: a sufficiently large normalized amplitude cannot remain trapped in shell \(j+1\); it must quickly propagate toward lower shells.
This lemma is the only step that uses the shell separation.

\begin{lemma}\label{le-one-shell-transfer} 
There are constants $B,C_*>0$, depending only on
$\alpha,\lambda,\nu$, with the following property. Let $Y$ be a non-negative
solution of \eqref{normalized-subcritical}. Suppose that, for some
$1\leq j\leq K$ and $t_0\geq0$,
\[
Y_{j+1}(t_0)=B,
\]
and $Y_{j-1}<B$ on $[t_0,t_0+C_*N_j^{-2}]$. Then $Y_j(t)\geq B$ for some
$t\in[t_0,t_0+C_*N_j^{-2}]$.
\end{lemma}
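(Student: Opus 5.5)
The plan is to argue by contradiction with the scaling $s=N_j^2(t-t_0)$, choosing $B$ large and $C_*=\kappa/B$ for an absolute constant $\kappa$ (say $\kappa=1$). Suppose that $Y_j<B$ on all of $I:=[t_0,t_0+C_*N_j^{-2}]$. Since $Y_j\geq0$, it then suffices to show that the lower bound obtained for $Y_j$ from its own equation exceeds $B$ at the end of $I$.

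\emph{Step 1: $Y_{j+1}$ cannot decay quickly.} In the equation \eqref{normalized-subcritical} for $Y_{j+1}$ I drop the non-negative term $aY_{j+2}^2$. Using $Y_j<B$ on $I$ gives
\[
Y_{j+1}'\geq -N_{j+1}^2(\nu+bB)Y_{j+1}.
\]
The choice $b=\lambda^{-2}$ gives $N_{j+1}^2 b=N_j^2$ and $N_{j+1}^2=\lambda^2N_j^2$. Hence
\[
Y_{j+1}(t)\geq B\,e^{-(\lambda^2\nu+B)s}\qquad\text{on } I .
\]

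\emph{Step 2: $Y_j$ is driven up.} In the equation for $Y_j$ I use $Y_{j-1}<B$ and $Y_j<B$ on $I$ to bound the loss terms by $(\nu+bB)B$. This gives
\[
\frac{dY_j}{ds}\geq aB^2e^{-2(\lambda^2\nu+B)s}-(\nu+\lambda^{-2}B)B.
\]
Integrating from $0$ to $\kappa/B$ and using $Y_j(t_0)\geq0$ yields
\[
Y_j\bigl(t_0+C_*N_j^{-2}\bigr)\geq \frac{aB^2\bigl(1-e^{-2(\lambda^2\nu+B)\kappa/B}\bigr)}{2(\lambda^2\nu+B)}-\kappa\Bigl(\frac{\nu}{B}+\lambda^{-2}\Bigr)B .
\]
As $B\to\infty$ the right-hand side equals $B\bigl[\tfrac a2(1-e^{-2\kappa})-\kappa\lambda^{-2}\bigr]+o(B)$.

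\emph{Step 3: the numerics, where the shell separation enters.} This is the only delicate point, because the bracket must be strictly larger than $1$. Since $1\leq\alpha<2$, we have $\beta\in(0,1]$. So $\lambda^\beta>2$ gives $a=\lambda^{2\beta}>4$, and also $\lambda\geq\lambda^\beta>2$, so $\lambda^{-2}<1/4$. With $\kappa=1$ the bracket is at least $2(1-e^{-2})-\tfrac14\approx1.48>1$. I therefore fix $B=B(\alpha,\lambda,\nu)$ large enough that the $o(B)$ error is below $0.4B$. Then $Y_j$ exceeds $B$ at the endpoint, contradicting $Y_j<B$ on $I$, so $Y_j\geq B$ somewhere in $I$.

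Two details remain. First, in Steps 1 and 2 I only use $Y_j<B$ on $I$, which is exactly the contradiction hypothesis; alternatively one can run the argument up to the first time $Y_j$ hits $B$. Second, $C_*=1/B$ depends only on $\alpha,\lambda,\nu$. The main obstacle is ensuring the constant in Step 3 beats $1$, and this is precisely where $\lambda^\beta>2$ is needed.
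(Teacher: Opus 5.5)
Your proof is correct and follows essentially the same route as the paper. Both argue by contradiction, derive $Y_{j+1}\geq Be^{-(\lambda^2\nu+B)s}$ from $Y_j<B$, feed this into the equation for $Y_j$, and use $\lambda^{2-\alpha}>2$ only at the final numerical step. The difference is technical: you bound the loss term by the constant $(\nu+bB)B$ and integrate up to $s=1/B$, whereas the paper keeps it as linear damping $-dY_j$, solves the linear ODE exactly and optimizes the stopping time. The paper's route yields the sharper condition $a\eta_\lambda>b$ and does not need your extra use of $\alpha\geq1$ to get $\lambda^{-2}<1/4$.
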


\begin{proof}
Take
\[
\eta_\lambda=(2\lambda^2)^{-\frac{2\lambda^2}{2\lambda^2-1}}.
\]
Since $(2\lambda^2)^{1/(2\lambda^2-1)}\leq2$, we have
$\eta_\lambda\geq(4\lambda^2)^{-1}$. The assumption
$\lambda^\beta>2$ therefore implies $a\eta_\lambda>b$. Choose $B>0$ such that
\begin{equation}\label{choice-B}
(a\eta_\lambda-b)B>\nu.
\end{equation}
Let $d=\nu+bB$. If the conclusion fails, then $Y_j<B$ on the whole
interval $[t_0,t_0+C_*N_j^{-2}]$. Then, by positivity and \eqref{normalized-subcritical}, we have
\[
Y_{j+1}(t_0+s)\geq B e^{-\lambda^2dN_j^2s},
\]
and hence
\[
Y_j'(t_0+s) \geq N_j^2\left(-dY_j(t_0+s)
+aB^2e^{-2\lambda^2dN_j^2s}\right).
\]
Therefore
\[
Y_j(t_0+s) \geq \frac{aB^2}{d(2\lambda^2-1)}
\left(e^{-dN_j^2s}-e^{-2\lambda^2dN_j^2s}\right).
\]
The expression in parentheses attains the value
$(2\lambda^2-1)\eta_\lambda$ at $s=C_*N_j^{-2}$, where
\[
C_*=\frac{\log(2\lambda^2)}{(2\lambda^2-1)d}.
\]
By \eqref{choice-B}, the resulting lower bound is larger than $B$, a
contradiction.
\end{proof}

In view of the inverse cascade mechanism revealed in the previous lemma, we further show that: starting with data concentrated in the remote shell \(K+1\), one can tune its amplitude so that a uniformly sized normalized pulse travels through every lower shell, without any normalized component becoming unbounded. Namely, we have the following ``shooting'' result.
\begin{proposition}\label{prop-critical-flux}
Fix $q\in(0,\alpha)$. There are constants
$A_-,B,m_0,c_0,C_0>0$ and amplitudes $A_K\in[A_-,B]$ such that the
solutions of \eqref{eq:galerkin} with
\begin{equation}\label{critical-flux-data}
F_K=A_KN_{K+1}^{2-\alpha}
\end{equation}
satisfy
\begin{equation}\label{critical-uniform-bound}
0\leq N_k^{\alpha-2}X_k^K(t)\leq B,
\qquad \forall 1\leq k\leq K+1,\ t\geq0,
\end{equation}
and
\begin{equation}\label{critical-first-shell}
\sup_{t\geq0}N_1^{\alpha-2}X_1^K(t)=\frac B2.
\end{equation}
Moreover, for every $1\leq k\leq K$,
\begin{equation}\label{critical-flux-bound}
\sup_{c_0N_k^{-2}\leq t\leq C_0N_k^{-(q+2-\alpha)}}
N_k^{\alpha-2}X_k^K(t)
\geq m_0.
\end{equation}
All constants are independent of $K$.
\end{proposition}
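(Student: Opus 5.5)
We work throughout with the normalized variables $Y_k=N_k^{\alpha-2}X_k^{K,A}$, which solve \eqref{normalized-subcritical} with $Y_{K+1}(0)=A$ and all other components zero. We take $B$ from Lemma~\ref{le-one-shell-transfer}. The amplitude $A_K$ is then selected by a continuity (shooting) argument in $A$.

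\textbf{Step 1: the shooting map.} For each $A\in(0,B]$ define
\[
\Phi_K(A)=\sup_{t\ge0}\ \sup_{1\le k\le K+1} Y_k^{K,A}(t) \quad\text{(restricted to the first exit time)},
\]
or, more concretely, the quantity $S(A)=\sup_{t\geq0} Y_1^{K,A}(t)$ computed up to the first time $\tau_A$ at which some $Y_k$ reaches $B$.

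\emph{Small $A$.} For $A$ small (some $A_-$ independent of $K$), the energy identity \eqref{Galerkin-energy-general} combined with the viscous term keeps every $Y_k$ below $A\cdot C$. Indeed, $Y_k'\le N_k^2(-\nu Y_k+aY_{k+1}^2)$ and $Y_{k+1}\le CA$ small, so by comparison $Y_k\le (a/\nu)C^2A^2\ll A$. A downward induction then gives $\sup Y_1\le B/4$.

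\emph{$A=B$.} Here Lemma~\ref{le-one-shell-transfer}, applied repeatedly starting from shell $K+1$ at time $0$, shows that the value $B$ must be attained in shell $K$, then in $K-1$, and so on down to shell $1$, unless some lower shell reaches $B$ earlier. In either case a component reaches level $B$ before shell $1$ is passed. This yields the dichotomy needed for shooting.

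\emph{Choosing $A_K$.} We set $A_K=\sup\{A\in[A_-,B]: Y_k^{K,A}(t)<B \text{ for all } k,t \text{ and } \sup_t Y_1\le B/2\}$. Solutions of the finite system depend continuously on $A$, uniformly on compact time intervals. Moreover, for large $t$ viscosity makes everything decay, since total energy is finite and $Y_k\to0$ exponentially. Hence suprema over $t\ge0$ are continuous in $A$, and at $A_K$ either $\sup Y_1=B/2$ or some $Y_k$ touches $B$. The second alternative must be excluded. I expect this to be the main obstacle: one needs a ``first-touch'' argument showing that if some $Y_{k}$ with $k\geq2$ touches $B$ while the lower shells stay below $B$, then Lemma~\ref{le-one-shell-transfer} pushes the pulse down and, by continuity in $A$, $Y_1$ must exceed $B/2$ slightly earlier along the family. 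To make this work I would replace the threshold $B$ for $k\ge2$ by a slightly larger threshold $B'$ and use the monotone structure (positivity, with the transfer inequality in the lemma being strict) to ensure that $Y_1$ crosses $B/2$ before any component crosses $B'$. Renaming $B'$ as $B$ then gives \eqref{critical-uniform-bound} and \eqref{critical-first-shell}.

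\textbf{Step 2: the flux lower bound \eqref{critical-flux-bound}.} Since $\sup_tY_1=B/2$, shell $1$ is fed only through $aY_2^2$. The equation for $Y_1$ gives $Y_1'\le N_1^2(aY_2^2)$, so some $Y_2\ge m_0$ must occur. Inductively, upward: if $\sup Y_k\ge m$ then $\sup Y_{k+1}\ge m'$, because $Y_k$ can only grow via $aY_{k+1}^2$ against damping of order $\nu$. This gives a uniform lower bound $m_0$ at every shell.

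The time window comes from two bounds. The lower time bound $c_0N_k^{-2}$ holds because each $Y_k$ starts at $0$ and satisfies $Y_k'\le N_k^2 aB^2$. The upper bound $C_0N_k^{-(q+2-\alpha)}$ follows from Lemma~\ref{le-Lyapunov}. Its hypothesis holds with $M=N_1^{2-\alpha}B$, so \eqref{Lyapunov-decay} gives $\|X^K(t)\|_{H^q}\lesssim t^{-1}$, i.e.\ $N_k^{q}X_k\lesssim t^{-1}$. This means $Y_k\lesssim N_k^{\alpha-2-q}t^{-1}<m_0$ once $t\ge C_0N_k^{-(q+2-\alpha)}$. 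Hence the large value of $Y_k$ must occur inside the stated window, and all constants are independent of $K$.
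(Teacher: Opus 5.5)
Your Step~2 is essentially the paper's argument: the lower time bound comes from $Y_k'\le aB^2N_k^2$, and the upper one from Lemma~\ref{le-Lyapunov} with $M=BN_1^{2-\alpha}$. Confining the attained maximum of $Y_k$ to the window is equivalent to the paper's first-hitting-time argument.

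The genuine gap is in Step~1, exactly where you flag the ``main obstacle''. You never exclude the alternative that some $Y_k$ touches $B$ at $A_K$, and your proposed workaround does not do so. What is missing is the following barrier, which holds trajectory by trajectory for each fixed $A\in(0,B]$:
\[
\max_{t\ge0}Y_1^{K,A}(t)<B\quad\Longrightarrow\quad\max_{t\ge0}Y_j^{K,A}(t)<B\qquad\text{for all }1\le j\le K+1.
\]
It follows from Lemma~\ref{le-one-shell-transfer} by minimality in the \emph{index}. Let $m$ be the smallest index with $\max_tY_m\ge B$. If $m\ge2$, then $Y_{m-2}<B$ for \emph{all} $t\ge0$ (with $Y_0\equiv0$). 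Applying the lemma with $j=m-1$ at the first time $Y_m=B$ then gives $\max_tY_{m-1}\ge B$, contradicting minimality. Your descent in the ``$A=B$'' paragraph is heading toward this, but you stop at ``some component reaches $B$'' instead of concluding that $Y_1$ does.

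This single fact closes the shooting, with no continuity-in-$A$ argument about which shell hits first. Set $\Phi_K(A):=\max_{t\ge0}Y_1^{K,A}(t)$. At $A=B$ the top shell starts at $B$, so the barrier gives $\Phi_K(B)\ge B$. Together with $\Phi_K(A_-)<B/2$ and continuity of $\Phi_K$, the intermediate value theorem gives $A_K$ with $\Phi_K(A_K)=B/2$, and the barrier then yields \eqref{critical-uniform-bound}.

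By contrast, raising the threshold to $B'$ for $k\ge2$ is unsupported: nothing you cite shows that $Y_1$ crosses $B/2$ before the other shells cross $B'$. Moreover, ``renaming $B'$ as $B$'' is incompatible with \eqref{critical-first-shell}, which ties the first-shell maximum to half of the \emph{same} constant that bounds all shells.

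Two smaller repairs are also needed.
\begin{itemize}
\item In Step~2, the upward propagation must not lose a factor at each shell, or $m_0$ will depend on $K$. The Duhamel bound $M_k\le DM_{k+1}^2$, with $M_k=\max_tY_k$ and $D=a/\nu$, gives $M_{k+1}\ge(M_k/D)^{1/2}$. You must fix $m_0=\min\{B/2,D^{-1}\}$, so that $(m_0/D)^{1/2}\ge m_0$ and the bound propagates from $M_1=B/2$.
\item In the small-$A$ paragraph, the energy identity does not give $Y_k\le CA$; it only gives $Y_k\le A(N_{K+1}/N_k)^{2-\alpha}$. The correct input is that $Y_{K+1}$ is nonincreasing, so $M_{K+1}=A$. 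The same Duhamel bound then gives $\Phi_K(A)\le D^{-1}(DA)^{2^K}$, which is below $B/2$ for small $A=A_-$, uniformly in $K$.
\end{itemize}
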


\begin{proof}
Recall that $\beta=2-\alpha$, and let $B$ be the threshold from
Lemma~\ref{le-one-shell-transfer}. For $A>0$, let $X^{K,A}$ be the Galerkin
solution with $F_K=AN_{K+1}^{\beta}$, and write
\[
Y_k^{K,A}=N_k^{-\beta}X_k^{K,A}.
\]
By Lemma~\ref{l:positivity}, all components of $Y^{K,A}$ are non-negative.

\smallskip
\noindent\emph{Shooting and the uniform barrier.}
By the energy equality we have
\[
\frac{d}{dt}\|X^{K,A}(t)\|_{\ell^2}^2
\leq -2\nu N_1^2\|X^{K,A}(t)\|_{\ell^2}^2,
\]
and hence
\begin{equation}\label{critical-energy-decay}
\|X^{K,A}(t)\|_{\ell^2}
\leq AN_{K+1}^{\beta}e^{-\nu N_1^2t}.
\end{equation}
Thus every $Y_k^{K,A}$ tends to zero as $t\to\infty$. For fixed $K$,
we use finite-time continuous dependence on $A$, together with \eqref{critical-energy-decay}, which shows that
$A\mapsto Y_1^{K,A}$ is continuous into $C_0([0,\infty))$ on compact intervals. Since
$f\mapsto\sup_{t\geq0}f(t)$ is $1$-Lipschitz in the supremum norm,
\[
\Phi_K(A):=\max_{t\geq0}Y_1^{K,A}(t)
\]
is continuous. More generally, let
\[
M_k(A):=\max_{t\geq0}Y_k^{K,A}(t),
\qquad 1\leq k\leq K+1.
\]
All these maxima are attained.

For $1\leq k\leq K$, by variation of constants in
\eqref{normalized-subcritical},
\[
Y_k^{K,A}(t) = aN_k^2\int_0^t \exp\!\left(-N_k^2\int_s^t
(\nu+bY_{k-1}^{K,A}(r))\,dr\right)
(Y_{k+1}^{K,A}(s))^2\,ds.
\]
By positivity,
\begin{equation}\label{critical-Duhamel-max}
M_k(A)\leq \frac a\nu M_{k+1}(A)^2, \qquad 1\leq k\leq K.
\end{equation}
Moreover,
\[
(Y_{K+1}^{K,A})'
=-N_{K+1}^2(\nu+bY_K^{K,A})Y_{K+1}^{K,A}\leq0,
\]
so $M_{K+1}(A)=A$. Take $D=a/\nu$. Iterating
\eqref{critical-Duhamel-max} yields
\begin{equation}\label{critical-small-shooting}
\Phi_K(A)=M_1(A) \leq D^{2^K-1}A^{2^K} = D^{-1}(DA)^{2^K}.
\end{equation}
Now we choose, independently of $K$,
\[
0<A_-<\min\left\{B,D^{-1},\left(\frac{B}{2D}\right)^{1/2}\right\}.
\]
Since $K\geq1$ and $DA_-<1$, \eqref{critical-small-shooting} implies
\begin{equation}\label{critical-shooting-low}
\Phi_K(A_-) \leq D^{-1}(DA_-)^{2^K} \leq DA_-^2 <\frac B2.
\end{equation}

We claim that, for every $A\in(0,B]$,
\begin{equation}\label{critical-barrier}
M_1(A)<B
\quad\to \quad
M_j(A)<B, \qquad \forall 1\leq j\leq K+1.
\end{equation}
Suppose otherwise, and let
\[
m:=\min\{j\in\{1,\dots,K+1\}:M_j(A)\geq B\}.
\]
Then $m\geq2$. Since $Y_j^{K,A}(0)=0$ for $j\leq K$ and
$Y_{K+1}^{K,A}(0)=A\leq B$, there is a first time $t_0\geq0$ such that
$Y_m^{K,A}(t_0)=B$. By the minimality of $m$,
\[
Y_{m-2}^{K,A}(t)<B, \qquad t\geq0,
\]
where $Y_0^{K,A}\equiv0$ if $m=2$. Lemma~\ref{le-one-shell-transfer},
applied with $j=m-1$, then forces $M_{m-1}(A)\geq B$, contradicting the
minimality of $m$. This implies \eqref{critical-barrier}.

For $A=B$, the top shell satisfies $M_{K+1}(B)=B$. Therefore
\eqref{critical-barrier} implies $\Phi_K(B)\geq B$. By
\eqref{critical-shooting-low} and the continuity of $\Phi_K$, there is
$A_K\in(A_-,B)$ such that
\begin{equation}\label{critical-shooting-normalization}
\Phi_K(A_K)=\frac B2.
\end{equation}
Denote
\[
X^K:=X^{K,A_K}, \qquad Y^K:=Y^{K,A_K}.
\]
Since $M_1(A_K)=B/2<B$, \eqref{critical-barrier} gives
$M_j(A_K)<B$ for every $j$. Consequently,
\[
0\leq Y_j^K(t)\leq B,
\qquad \forall 1\leq j\leq K+1,\ t\geq0,
\]
which implies \eqref{critical-uniform-bound}; and
\eqref{critical-shooting-normalization} is precisely
\eqref{critical-first-shell}.

\smallskip
\noindent\emph{Uniform transfer through all shells.}
Write $M_k=M_k(A_K)$ and define
\[
m_0:=\min\left\{\frac B2,D^{-1}\right\}.
\]
From \eqref{critical-Duhamel-max},
\[
M_{k+1}\geq\left(\frac{M_k}{D}\right)^{1/2}.
\]
Since $m_0\leq D^{-1}$, we have
$(m_0/D)^{1/2}\geq m_0$. Starting from $M_1=B/2\geq m_0$, by induction, we obtain 
\begin{equation}\label{critical-shell-maxima}
M_k\geq m_0,
\qquad 1\leq k\leq K+1.
\end{equation}
For $1\leq k\leq K$, let $t_{k,K}$ be the first time at which
\[
Y_k^K(t_{k,K})=m_0.
\]
This time exists by \eqref{critical-shell-maxima}, because $Y_k^K(0)=0$ and
$M_k$ is attained. Using \eqref{normalized-subcritical} and
$0\leq Y_j^K\leq B$, we obtain
\[
(Y_k^K)'\leq aB^2N_k^2.
\]
Therefore
\begin{equation}\label{critical-lower-transfer-time}
t_{k,K}\geq \frac{m_0}{aB^2}N_k^{-2}.
\end{equation}

\smallskip

\noindent\emph{Upper bound for the transfer time.}
The uniform barrier gives
\[
X_1^K(t)=N_1^\beta Y_1^K(t)\leq BN_1^\beta,
\qquad t\geq0.
\]
Fix the coefficient $c>0$ in the definition of $H_{q,K}$ according to
Lemma~\ref{le-Lyapunov}, with $M=BN_1^\beta$. Then
Lemma~\ref{le-Lyapunov} and \eqref{Lyapunov-decay} apply globally, with
constants independent of $K$. Since only the $(K+1)$st component is nonzero
at time zero,
\[
H_{q,K}(0)=A_K^2N_{K+1}^{2(q+\beta)}.
\]
Thus, with
\[
\tau_K:=(A_KN_{K+1}^{q+\beta})^{-1},
\]
we have
\begin{equation}\label{critical-H-decay}
H_{q,K}(t)\lesssim(t+\tau_K)^{-2}.
\end{equation}
At $t=t_{k,K}$, coercivity \eqref{H-norm-K} implies
\[
H_{q,K}(t_{k,K}) \gtrsim N_k^{2q}(X_k^K(t_{k,K}))^2
=m_0^2N_k^{2(q+\beta)}.
\]

Combining this with \eqref{critical-H-decay}, we obtain a constant $C_0>0$,
independent of $k$ and $K$, such that
\begin{equation}\label{critical-upper-transfer-time}
t_{k,K}+\tau_K\leq C_0N_k^{-(q+\beta)}.
\end{equation}
Let
\[
c_0:=\frac{m_0}{aB^2}.
\]
Since $\tau_K\geq0$, \eqref{critical-lower-transfer-time} and
\eqref{critical-upper-transfer-time} imply
\[
c_0N_k^{-2} \leq t_{k,K} \leq C_0N_k^{-(q+\beta)}.
\]
At this time,
\[
N_k^{\alpha-2}X_k^K(t_{k,K}) =N_k^{-\beta}X_k^K(t_{k,K})
=Y_k^K(t_{k,K}) = m_0.
\]
Since $q+\beta=q+2-\alpha$, this is exactly
\eqref{critical-flux-bound}. All constants are independent of $K$.

\end{proof}

Now we proceed to

\begin{proof}[Proof of Theorem~\ref{th:alpha<2}]
Take $q=\alpha/2$ and let $X^K$ be the sequence supplied by
Proposition~\ref{prop-critical-flux}. By
\eqref{Lyapunov-decay},
\[
H_{q,K}(t)\lesssim(t+\tau_K)^{-2}.
\]
Since $A_K\geq A_->0$ and $q+\beta>0$,
\[
\tau_K = A_K^{-1}N_{K+1}^{-(q+\beta)} \to 0.
\]
Consequently, for every $\delta>0$,
\begin{equation}\label{positive-time-energy-subcritical}
\sup_K\sup_{t\geq\delta}\|X^K(t)\|_{\ell^2}<\infty.
\end{equation}
Indeed, \eqref{H-norm-K} bounds $\|X^K(\delta)\|_{\ell^2}$ uniformly in $K$, and
the Galerkin energy is non-increasing.

The uniform bound \eqref{critical-uniform-bound} also bounds every fixed
component and its derivative uniformly in $K$. Using the Arzel\`a--Ascoli theorem and passing to a diagonal subsequence, still denoted by $X^K$, we obtain a solution $X(t)$ of \eqref{Obukhov} such that
\[
X_k^K\to X_k \quad\text{locally uniformly on }[0,\infty)
\]
for every $k$.

Thanks to  \eqref{critical-uniform-bound}, for any $k$ and
$K\geq k+1$,
\[
0\leq X_k^K(t) \leq N_k^\alpha\int_0^t(X_{k+1}^K(s))^2\,ds
\leq B^2N_k^\alpha N_{k+1}^{2\beta}t.
\]
Hence
\[
\lim_{t\to0+}X_k(t)=0, \qquad k\geq1.
\]
Moreover, we can pass to the limit as $K\to \infty$ in \eqref{critical-uniform-bound} to obtain
$X_k(t)\leq BN_k^\beta$. Therefore, for every $s<-\beta=\alpha-2$, we have
\[
\lim_{t\to0+}\|X(t)\|_{H^s}=0.
\]

Since the flux in the energy balance is negative, we obtain, for $L\geq 2$
\[
\frac{d}{dt}\sum_{k=L}^{K+1}(X_k^K)^2
\leq-2\nu N_L^2\sum_{k=L}^{K+1}(X_k^K)^2.
\]
Fix $0<\delta<T$ and take $t_0=\delta/2$. Then, for $t\in[\delta,T]$,
\[
\sum_{k=L}^{K+1}(X_k^K(t))^2 \leq e^{-2\nu N_L^2(t-t_0)}\|X^K(t_0)\|_{\ell^2}^2 \lesssim_\delta e^{-\nu\delta N_L^2},
\]
where the last estimate follows from
\eqref{positive-time-energy-subcritical}.
Then for each mode
\[
X_k^K(t)^2 \lesssim_\delta e^{-\nu\delta N_k^2}, \qquad \forall t\in[\delta,T],
\]
and passing to the limit as $K \to \infty$ we get the same exponential decaying bound for $X_k(t)$. Hence 
\[
X\in C((0,\infty);H^s) \qquad\text{for every }s\in\mathbb R.
\]

Let
\[
I_k=[c_0N_k^{-2},C_0N_k^{-(q+\beta)}].
\]
Since $q<\alpha$, one has $q+\beta<2$, so these intervals are nonempty after
increasing $C_0$, if necessary. For every fixed $k$, by uniform convergence
on $I_k$ and \eqref{critical-flux-bound}, we obtain
\[
\max_{t\in I_k}N_k^{-\beta}X_k(t) =
\lim_{K\to\infty}\max_{t\in I_k}N_k^{-\beta}X_k^K(t)
\geq m_0.
\]
Choose $t_k\in I_k$ with
$X_k(t_k)\geq m_0N_k^\beta$. Since $q+\beta>0$, $t_k\to0$. Moreover, for
every $r>\alpha-2=-\beta$,
\[
\begin{split}
t_k^{\frac{r+\beta}{2}}\|X(t_k)\|_{H^r}
&\geq m_0t_k^{\frac{r+\beta}{2}}N_k^{r+\beta}\\
&\geq m_0c_0^{\frac{r+\beta}{2}}.
\end{split}
\]
This proves \eqref{blow-up_rate}.
\end{proof}

\begin{remark}
The amplitudes $A_K$ are selected by a finite-dimensional shooting argument;
a fixed coefficient $A_*$ is not needed. The condition
$\lambda^{2-\alpha}>2$ enters only in
Lemma~\ref{le-one-shell-transfer}. Removing this separation condition would
require a sharper transfer estimate.
\end{remark}

\subsection{The energy-critical case}
In this case $\alpha=2$, we take a similar route as in Subsection \ref{ss:subcritical} for the energy-subcritical situation. 

Fix $\sigma>0$ as in \eqref{eq:sigma-separation}, and set
\begin{equation}\label{eq:critical-weighted-constants}
a:=\lambda^{2\sigma}, \qquad
b:=\lambda^{-(2+\sigma)}, \qquad
\Lambda:=\lambda^{1+\sigma/2}.
\end{equation}
For a Galerkin solution $X_k^{K,A}$ to \eqref{eq:galerkin} with
\[
F_K=A_K N_{K+1}^{\sigma}, \quad \mbox{for some} \quad A_k>0,
\]
define
\[
Y_k^{K,A}:=N_k^{-\sigma}X_k^{K,A}.
\]
It follows from \eqref{eq:galerkin} that
\begin{equation}\label{eq:critical-normalized-system}
(Y_k^{K,A})'
=N_k^{2+\sigma}
\left(-\nu N_k^{-\sigma}Y_k^{K,A}+a(Y_{k+1}^{K,A})^2-bY_{k-1}^{K,A}Y_k^{K,A}\right),
\qquad 1\leq k\leq K+1,
\end{equation}
with $Y_0^{K,A}=Y_{K+2}^{K,A}=0$. 

\begin{lemma}
\label{lem:critical-one-shell-transfer}
There exists $B>0$, depending only on $\nu,\lambda,\sigma$, with the
following property.  Let $Y$ be a non-negative solution of
\eqref{eq:critical-normalized-system}.  If, for some $1\leq j\leq K$ and
$t_0\geq0$,
\[
Y_{j+1}(t_0)=B, \qquad Y_{j-1}(t)<B\quad \forall t\geq t_0,
\]
where $Y_0\equiv0$ when $j=1$, then
\[
Y_j(t)= B \qquad \text{for some} \quad t\geq t_0.
\]
\end{lemma}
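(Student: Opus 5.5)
We argue by contradiction, along the lines of Lemma~\ref{le-one-shell-transfer}. The new feature is that the viscous coefficient in \eqref{eq:critical-normalized-system} is $\nu N_j^{-\sigma}$, which decays with $j$. This is why the conclusion holds without a time window: we only need to show that $Y_j$ reaches $B$ at some time $t\geq t_0$. Suppose instead that $Y_j(t)<B$ for all $t\geq t_0$. (Since $Y_j$ is continuous, once it reaches $B$ we are done.)

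\emph{Step 1: lower bound on $Y_{j+1}$.} Under the contradiction hypothesis, positivity and \eqref{eq:critical-normalized-system} give
\[
Y_{j+1}'\geq -N_{j+1}^{2+\sigma}\bigl(\nu N_{j+1}^{-\sigma}+bB\bigr)Y_{j+1}.
\]
Set $d_j:=\nu N_j^{-\sigma}+bB$. Since $N_{j+1}^{-\sigma}\leq N_j^{-\sigma}$, this yields
\[
Y_{j+1}(t_0+s)\geq B\,e^{-\lambda^{2+\sigma}d_jN_j^{2+\sigma}s}.
\]

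\emph{Step 2: lower bound on $Y_j$.} The hypothesis $Y_{j-1}<B$ gives
\[
Y_j'\geq N_j^{2+\sigma}\Bigl(-d_jY_j+aB^2e^{-2\lambda^{2+\sigma}d_jN_j^{2+\sigma}s}\Bigr).
\]
After the rescaling $\tau=d_jN_j^{2+\sigma}s$, this is the same linear ODE as in Lemma~\ref{le-one-shell-transfer}, with $\lambda^2$ replaced by $\mu:=\lambda^{2+\sigma}$. Integrating from $Y_j(t_0)\geq0$, we get
\[
Y_j\geq\frac{aB^2}{d_j(2\mu-1)}\bigl(e^{-\tau}-e^{-2\mu\tau}\bigr).
\]
At the optimal $\tau$ this is at least $aB^2\eta/d_j$, where $\eta:=(2\mu)^{-2\mu/(2\mu-1)}\geq(4\mu)^{-1}$. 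A contradiction therefore follows as soon as $aB\eta>d_j=\nu N_j^{-\sigma}+bB$.

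\emph{Step 3: the algebraic condition.} Since $N_j\geq\lambda$, it suffices to have $(a\eta-b)B>\nu\lambda^{-\sigma}$. We compute
\[
a\eta\geq\frac{\lambda^{2\sigma}}{4\lambda^{2+\sigma}}=\frac{\lambda^{\sigma-2}}{4},
\qquad b=\lambda^{-2-\sigma}.
\]
So $a\eta>b$ reduces to $\lambda^{2\sigma}>4$, which is exactly $\lambda^\sigma>2$. I expect this to be the main obstacle: \eqref{eq:sigma-separation} only gives $\lambda^\sigma\geq2$, and the borderline case fails because the bound $\eta\geq(4\mu)^{-1}$ is not sharp. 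To close the gap, use the exact value $\eta=(2\mu)^{-2\mu/(2\mu-1)}$. Since $(2\mu)^{1/(2\mu-1)}<2$ for $\mu>1$ (which holds because $\lambda>1$ and $\sigma>0$), we get $\eta>(4\mu)^{-1}$ strictly. This gives $a\eta>b$ even when $\lambda^\sigma=2$. Then choose $B>\nu\lambda^{-\sigma}/(a\eta-b)$, which depends only on $\nu,\lambda,\sigma$. The contradiction shows that $Y_j$ attains $B$ at some time $t\geq t_0$.
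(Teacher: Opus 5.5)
Your proof is correct and is essentially the paper's argument: the same contradiction argument as in Lemma~\ref{le-one-shell-transfer} with $\lambda^2$ replaced by $\Lambda^2=\lambda^{2+\sigma}$, the same constant $\eta_\Lambda$ with the strict bound $\eta_\Lambda>(4\Lambda^2)^{-1}$ to cover the borderline case $\lambda^\sigma=2$, and the same choice $(a\eta_\Lambda-b)B>\nu N_1^{-\sigma}$ (your $d_j$ is bounded by the paper's $d=\nu N_1^{-\sigma}+bB$). The one case you leave unaddressed, $Y_j>B$ on all of $[t_0,\infty)$, is harmless: the finite-dimensional energy balance forces $Y_j(t)\to0$, so this case cannot occur, and it also never arises in the application in Proposition~\ref{prop:critical-shooting}.
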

\begin{proof}
Denote
\begin{equation}\label{eq:critical-eta}
\eta_\Lambda
:=(2\Lambda^2)^{-\frac{2\Lambda^2}{2\Lambda^2-1}}.
\end{equation}
Since $\Lambda>1$,
\[
(2\Lambda^2)^{1/(2\Lambda^2-1)}<2, \qquad
\eta_\Lambda>\frac{1}{4\Lambda^2}.
\]
Therefore \eqref{eq:sigma-separation} implies
\begin{equation}\label{eq:critical-gain-condition}
a\eta_\Lambda>b.
\end{equation}
Indeed,
\[
a\eta_\Lambda
>\frac{\lambda^{2\sigma}}{4\lambda^{2+\sigma}}
=\frac{\lambda^{2\sigma}}4\,b
\geq b.
\]

Choose $B$ so large that
\begin{equation}\label{eq:critical-choice-B}
(a\eta_\Lambda-b)B>\nu N_1^{-\sigma},
\end{equation}
and set
\[
d:=\nu N_1^{-\sigma}+bB.
\]
Then the conclusion of the lemma follows from a similar contradiction argument as in the proof of Lemma \ref{le-one-shell-transfer}.

\end{proof}

\begin{proposition}
\label{prop:critical-shooting}
There are amplitudes $A_K\in(0,B)$ such that the solutions of
\eqref{eq:galerkin} with
\begin{equation}\label{eq:critical-shooting-data}
F_K=A_KN_{K+1}^{\sigma}
\end{equation}
satisfy
\begin{equation}\label{eq:critical-weighted-barrier}
0\leq N_k^{-\sigma}X_k^K(t)\leq B
\qquad \mbox{for} \quad 1\leq k\leq K+1,\quad t\geq0
\end{equation}
and
\begin{equation}\label{eq:critical-first-shell-normalization}
\max_{t\geq0}N_1^{-\sigma}X_1^K(t)=\frac B2.
\end{equation}
\end{proposition}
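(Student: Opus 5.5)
The plan is to repeat the shooting argument of Proposition~\ref{prop-critical-flux}, with the critical threshold $B$ taken from Lemma~\ref{lem:critical-one-shell-transfer}. For $A>0$ let $Y^{K,A}=N_k^{-\sigma}X^{K,A}$ solve \eqref{eq:critical-normalized-system} with $Y_{K+1}^{K,A}(0)=A$ and all other components zero. Set $M_k(A)=\max_{t\ge0}Y_k^{K,A}(t)$ and $\Phi_K(A)=M_1(A)$. By Lemma~\ref{l:positivity} every component is non-negative. The energy equality gives
\[
\|X^{K,A}(t)\|_{\ell^2}\le AN_{K+1}^\sigma e^{-\nu N_1^2t},
\]
so each $Y_k^{K,A}$ decays to zero. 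Combining this decay with continuous dependence on $A$ over finite times, $A\mapsto Y_1^{K,A}$ is continuous into $C_0([0,\infty))$. Hence $\Phi_K$ is continuous and all maxima are attained.

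\textbf{Small amplitudes.} Apply variation of constants in \eqref{eq:critical-normalized-system}. The damping rate is $N_k^{2}\nu+N_k^{2+\sigma}bY_{k-1}\ge \nu N_k^2$ and the source is $aN_k^{2+\sigma}Y_{k+1}^2$. This gives
\[
M_k\le \frac{aN_k^\sigma}{\nu}M_{k+1}^2 .
\]
Here the constant grows with $k$, unlike in \eqref{critical-Duhamel-max}, and this is the step I expect to need care. I would still iterate it, since it suffices for a fixed $K$. With $D_k=aN_k^\sigma/\nu$ we get $\Phi_K(A)\le C_K A^{2^K}$, so $\Phi_K(A)<B/2$ for $A$ small enough depending on $K$. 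This $K$-dependence is harmless, because the statement only requires $A_K\in(0,B)$, not a uniform lower bound. Separately, since $(Y_{K+1}^{K,A})'\le0$, we have $M_{K+1}(A)=A$.

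\textbf{Barrier.} Next I would show that for $A\in(0,B]$, $M_1(A)<B$ implies $M_j(A)<B$ for all $j$. Suppose not, and let $m$ be the minimal index with $M_m\ge B$. Then $m\ge2$ and $Y_{m-2}<B$ for all time. Let $t_0$ be the first time at which $Y_m=B$; this time exists because the initial values are at most $B$. Lemma~\ref{lem:critical-one-shell-transfer} with $j=m-1$ then gives $Y_{m-1}(t)=B$ for some $t$, contradicting the minimality of $m$. The lemma's hypothesis is stated for all $t\ge t_0$, and it holds here because the minimality bound is global in time.

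\textbf{Shooting.} Apply the barrier implication at $A=B$. Since $M_{K+1}(B)=B$, the contrapositive forces $\Phi_K(B)\ge B$. The small-amplitude bound gives $\Phi_K(A)<B/2$ for some small $A$, so by the intermediate value theorem there is $A_K\in(0,B)$ with $\Phi_K(A_K)=B/2$. This is \eqref{eq:critical-first-shell-normalization}. Applying the barrier again at $A_K$ gives $0\le Y_j^{K}\le B$ for all $j$ and $t$, which is \eqref{eq:critical-weighted-barrier}.

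The main obstacle is that Lemma~\ref{lem:critical-one-shell-transfer} is proved only by reference to the earlier one. I would verify that the contradiction argument closes with $d=\nu N_1^{-\sigma}+bB$ and rate $\Lambda^2$ in place of $\lambda^2$. In the normalized time $N_j^{2+\sigma}s$, the $j+1$ shell evolves faster by a factor $\lambda^{2+\sigma}=\Lambda^2$. The viscous coefficient $\nu N_k^{-\sigma}$ is at most $\nu N_1^{-\sigma}$, so it is uniformly bounded, and the argument should go through.
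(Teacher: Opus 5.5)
Your proposal is correct and follows the paper's proof essentially step for step. The steps match: energy decay plus finite-time continuous dependence makes $\Phi_K$ continuous; the minimal-index barrier argument uses Lemma~\ref{lem:critical-one-shell-transfer} with $j=m-1$; $M_{K+1}(B)=B$ forces $\Phi_K(B)\geq B$; and the intermediate value theorem then produces $A_K$. The only deviation is your $K$-dependent iterated Duhamel bound for small $A$. It is valid but unnecessary: the paper simply notes $\Phi_K(0)=0$ and uses continuity on $[0,B]$. Indeed the energy bound already gives $\Phi_K(A)\leq AN_{K+1}^{\sigma}N_1^{-\sigma}$.
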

\begin{proof}
From the energy balance we get
\[
\|X^{K,A}(t)\|_{\ell^2} \leq AN_{K+1}^{\sigma}e^{-\nu N_1^2t}.
\]
Thus each $Y_k^{K,A}$ tends to zero as $t\to\infty$.  For fixed $K$,
using finite-time continuous dependence on $A$, together with the above, we obtain that $A\mapsto Y_1^{K,A}$ is
continuous into $C_0([0,\infty))$, and hence
\[
\Phi_K(A):=\max_{t\geq0}Y_1^{K,A}(t)
\]
is continuous on $[0,B]$. Thus the maximum is attained.

We claim that
\begin{equation}\label{eq:critical-downward-barrier}
\Phi_K(A)<B \quad \Longrightarrow\quad
\max_{t\geq0}Y_j^{K,A}(t)<B \quad\text{for every }1\leq j\leq K+1.
\end{equation}
Assume the contrary, and let $m$ be the smallest index whose component reaches $B$:
\[
m := \min \{ j: \max_{t\geq 0}Y_j^{K,A}(t) \geq B \}.
\]
Then $m\geq2$, and the component $Y_{m-2}^{K,A}$ remains below $B$ for all
time.  At the first time when $Y_m^{K,A}=B$, Lemma~\ref{lem:critical-one-shell-transfer}, applied with $j=m-1$, implies that
$Y_{m-1}^{K,A}$ reaches $B$ at some time as well, contradicting the minimality of $m$.

For $A=B$, the top shell is nonincreasing and starts
at $B$, so its maximum is equal to $B$.  Hence
\eqref{eq:critical-downward-barrier} implies $\Phi_K(B)\geq B$. Note  $\Phi_K(0)=0$. By continuity, there is $A_K\in(0,B)$ such that
$\Phi_K(A_K)=B/2$.  Applying \eqref{eq:critical-downward-barrier} to this
amplitude gives \eqref{eq:critical-weighted-barrier}, while the choice of
$A_K$ gives \eqref{eq:critical-first-shell-normalization}.

\end{proof}

\begin{proof}[Proof of Theorem~\ref{th:critical-blowup}]
Let $X^K$ be the solutions obtained in Proposition~\ref{prop:critical-shooting}. The weighted barrier implies
\begin{equation}\label{eq:critical-X-barrier}
0\leq X_k^K(t)\leq BN_k^\sigma.
\end{equation}
In particular, $X_1^K\leq BN_1^\sigma$ globally. Lemma \ref{le-Lyapunov} together with
\[
H_{1,K}(0)=A_K^2N_{K+1}^{2+2\sigma}>0
\]
yields
\begin{equation}\label{eq:critical-positive-time-H1}
\sum_{k=1}^{K+1}N_k^2(X_k^K(t))^2\lesssim t^{-2}, \qquad t>0,
\end{equation}
up to a constant independent of $K$.

\smallskip
\noindent\emph{Compactness and nontriviality.}
Let $t_K$ be the first time at which $Y_1^K(t_K)=B/2$.  Since
\[
(Y_1^K)'\leq aN_1^{2+\sigma}B^2,
\]
we have
\[
t_K\geq \frac{1}{2aBN_1^{2+\sigma}}.
\]
On the other hand, coercivity gives
$H_{1,K}(t_K)\gtrsim B^2N_1^{2+2\sigma}$, while \eqref{Lyapunov-decay} implies $H_{1,K}(t_K)\lesssim t_K^{-2}$.
Thus $t_K\leq C$, uniformly in $K$.  After passing to a subsequence,
\begin{equation}\label{eq:critical-tK-limit}
t_K\to t_*>0.
\end{equation}

For each fixed $k$, thanks to \eqref{eq:critical-X-barrier}, we have that
$X_k^K$ and $(X_k^K)'$ are uniformly bounded on $[0,\infty)$ for all sufficiently
large $K$. By a diagonal
Arzel\`a--Ascoli argument, we obtain a subsequence, still denoted by
$X^K$, and non-negative functions $X_k\in C([0,\infty))$ such that
\begin{equation}\label{eq:critical-component-convergence}
X_k^K\to X_k \quad\text{locally uniformly on }[0,\infty)
\end{equation}
for every fixed $k$.  Passing to the limit in the integral form of the
Galerkin equations shows that, with $X_0\equiv0$, each $X_k$ belongs to
$C^1([0,\infty))$ and $X=(X_k)_{k\geq1}$ solves
\eqref{Obukhov} componentwise.  Moreover,
\eqref{eq:critical-tK-limit} and
\eqref{eq:critical-first-shell-normalization} imply
\begin{equation}\label{eq:critical-nontriviality}
X_1(t_*)=\frac B2N_1^\sigma>0.
\end{equation}

\smallskip
\noindent\emph{Positive-time regularity and the zero trace.} 
Fix $0<\delta<T$.  From \eqref{eq:critical-positive-time-H1},
\[
\sup_K\|X^K(\delta/2)\|_{\ell^2}^2\lesssim_\delta1.
\]
From the energy balance we get, for $L\geq1$,
\[
\frac{d}{dt}\sum_{k=L}^{K+1}(X_k^K)^2
\leq-2\nu N_L^2\sum_{k=L}^{K+1}(X_k^K)^2.
\]
Hence, for $t\in[\delta,T]$,
\begin{equation}\label{eq:critical-positive-time-tail}
X_L^K(t)^2 \leq \sum_{k=L}^{K+1}(X_k^K(t))^2 \lesssim_\delta e^{-\nu\delta N_L^2}.
\end{equation}
Passing to the limit as $K \to \infty$, this exponentially decaying bound implies
\begin{equation}\label{eq:critical-positive-time-strong}
X^K\to  X \qquad\text{in} \quad C([\delta,T];H^r),
\end{equation}
for every $r\in\mathbb R$. Thus $X\in C((0,\infty);H^r)$ for every $r\in\mathbb R$.

Passing to the limit in the weighted barrier, we get
\begin{equation}\label{eq:critical-limit-barrier}
0\leq X_k(t)\leq BN_k^\sigma.
\end{equation}
Since the negative terms in \eqref{Obukhov} may be discarded,
\begin{equation}\label{eq:critical-pointwise-small-time}
0\leq X_k(t)
\leq N_k^2\int_0^tX_{k+1}(s)^2\,ds
\leq B^2\lambda^{2\sigma}tN_k^{2+2\sigma}.
\end{equation}
Thus $X_k(t)\to0$ as $t\to0+$ for every fixed $k$.  If $s<-\sigma$, then
\[
N_k^{2s}X_k(t)^2\leq B^2N_k^{2(s+\sigma)},
\]
and the right-hand side is summable, which implies that
$\|X(t)\|_{H^s}\to 0$ as $t\to0+$.  Defining $X(0)=0$, we conclude that
\[X\in C([0,\infty);H^s) \qquad \mbox{for} \quad s<-\sigma.
\]

\smallskip
\noindent\emph{Blow-up of the energy.}
For $0<\tau<t$, the energy balance in the first $L$ shells reads
\[
\frac12\sum_{k=1}^L X_k(t)^2
+\nu\int_\tau^t\sum_{k=1}^L N_k^2X_k(s)^2\,ds =
\frac12\sum_{k=1}^L X_k(\tau)^2
+\int_\tau^tN_L^2X_LX_{L+1}^2\,ds.
\]
Since $X\in C([\tau,t];H^1)$,
\[
N_L^2X_LX_{L+1}^2
\leq\lambda^{-2}N_L^{-1}\|X\|_{H^1}^3,
\]
so the last integral tends to zero as $L\to\infty$.  Therefore
\begin{equation}\label{eq:critical-limit-energy}
\frac12\|X(t)\|_{\ell^2}^2+\nu\int_\tau^t\|X(s)\|_{H^1}^2\,ds
=\frac12\|X(\tau)\|_{\ell^2}^2, \qquad 0<\tau<t.
\end{equation}
In particular, $\|X(t)\|_{\ell^2}$ is nonincreasing in $t$, and its limit
as $t\to0+$ exists in $(0,\infty]$ by
\eqref{eq:critical-nontriviality}.

Suppose that this limit were finite, and let
\[
M:=\sup_{0<t\leq t_*}\|X(t)\|_{\ell^2}<\infty.
\]
Letting $\tau\to0+$ in \eqref{eq:critical-limit-energy} shows that
\begin{equation}\label{eq:critical-finite-dissipation}
\int_0^{t_*}\|X(s)\|_{H^1}^2\,ds<\infty.
\end{equation}
Since $X_k(0)=0$, the energy balance in the first $L$ shells on $[0,t]$ is
\begin{equation}\label{eq:critical-low-mode-zero}
\frac12\sum_{k=1}^{L}X_k(t)^2
+\nu\int_0^t\sum_{k=1}^{L}N_k^2X_k(s)^2\,ds
=\int_0^tN_L^2X_L(s)X_{L+1}(s)^2\,ds.
\end{equation}
For $t\leq t_*$, the right-hand side is bounded by
\[
\lambda^{-2}M\int_0^tN_{L+1}^2X_{L+1}(s)^2\,ds,
\]
which tends to zero as $L\to\infty$ by
\eqref{eq:critical-finite-dissipation}.  Passing to the limit in
\eqref{eq:critical-low-mode-zero}, we obtain
\[
\frac12\|X(t)\|_{\ell^2}^2 + \nu\int_0^t\|X(s)\|_{H^1}^2\,ds=0,
\]
contradicting \eqref{eq:critical-nontriviality}.  Hence
\eqref{eq:critical-energy-blowup} holds.

\smallskip
\noindent\emph{The logarithmic lower bound.}
Set $m_*:=X_1(t_*)>0$.  From
\eqref{eq:critical-pointwise-small-time},
\[
\sum_{k=1}^{K}X_k(t)^2
\lesssim t^2N_K^{4+4\sigma}.
\]
Choose $\theta>0$ sufficiently small.  For all sufficiently small $t$, let
$K(t)$ be the largest integer such that
\begin{equation}\label{eq:critical-Kt}
tN_{K(t)}^{2+2\sigma}\leq\theta.
\end{equation}
Then
\begin{equation}\label{eq:critical-low-energy-small}
\sum_{k=1}^{K(t)}X_k(t)^2\leq\frac{m_*^2}{4}.
\end{equation}
For every $1\leq L\leq K(t)$, the low-mode balance between $t$ and $t_*$
gives
\begin{equation}\label{eq:critical-flux-lower}
\int_t^{t_*}N_L^2X_L(s)X_{L+1}(s)^2\,ds
\geq\frac{3m_*^2}{8}.
\end{equation}
Summing over $1\leq L\leq K(t)$ and using
\[
\sum_{L=1}^{K(t)}N_L^2X_LX_{L+1}^2
=\lambda^{-2}\sum_{L=1}^{K(t)}X_LN_{L+1}^2X_{L+1}^2
\leq\lambda^{-2}\|X(s)\|_{\ell^2}\|X(s)\|_{H^1}^2,
\]
we obtain
\[
K(t) \lesssim \|X(t)\|_{\ell^2} \int_t^{t_*}\|X(s)\|_{H^1}^2\,ds
\lesssim\|X(t)\|_{\ell^2}^3.
\]
Here we used the monotonicity of $\|X(s)\|_{\ell^2}$ and
\eqref{eq:critical-limit-energy}.  The maximality in
\eqref{eq:critical-Kt} implies that, after decreasing $t_0>0$ if necessary,
\[
K(t)\gtrsim1+\log\frac{t_0}{t}, \qquad 0<t<t_0,
\]
which implies \eqref{eq:log-lower-bound}.

\end{proof}

\section{Second approach: an explicit multiscale construction}
\label{sec:second_approach}

The purpose of this section is to outline a second approach toward
Theorems~\ref{th:alpha<2} and \ref{th:alpha>2} which is more amenable to
proving analogous theorems for the full Navier--Stokes equations.  For
brevity, we specialize to the case $\alpha>2$, as the $\alpha<2$ strategy
closely aligns with the main construction in~\cite{cheskidov2025instantaneous}.
Throughout the section, $N_k=\lambda^k$ and $\alpha>2$.

Unlike the argument in Section~\ref{sec:first_approach}, the scheme below
uses no monotone quantity adapted to the model.  It consists of an explicit
construction with a perturbative corrector, analogous to successful arguments in the PDE setting such as \cite{MR5008166} and \cite{cheskidov2025instantaneous}.

We aim at a solution of \eqref{Obukhov} with
\begin{equation}\label{second-approach-goal}
\lim_{t\to0+}X_k(t)=0,
\qquad k=1,2,\dots,
\quad\text{and}\quad
\liminf_{t\to0+}\|X(t)\|_{\ell^2}>0.
\end{equation}
This already yields the failure of uniqueness; the exact normalization
$\lim_{t\to0+}\|X(t)\|_{\ell^2}=1$ obtained in Theorem~\ref{th:alpha>2} can be achieved by a
careful tuning of amplitudes.

The solution admits the decomposition
\[
X=X^p+X^c,
\]
with $X^p$ the ``principal part'' arising from a fairly explicit
construction and $X^c$ a corrector, through the following steps.
Here $\delta>0$ is a small parameter of the construction and $T>0$ is small.
\begin{itemize}[left=3em]
\item[Step 1.] Construct an explicit $X^p$ with $0\leq X_k^p\leq1$ such that
$X_k^p(t)\to0$ as $t\to0+$ for every fixed $k$, and
\[
\bigl|\|X^p(t)\|_{\ell^2}^2-1\bigr|\lesssim\delta
\qquad\text{for all }0<t\leq T.
\]
\item[Step 2.] Show that $X^p$ solves \eqref{Obukhov} up to an error $f$ which
is small both componentwise in $L^1_t$ and in the scale-invariant norm
$\sup_{0<t\leq T}t\|f(t)\|_{\ell^2}$.
\item[Step 3.] Solve the corrector equation with $X^c(0)=0$, obtaining a
solution with
\[
\sup_{0<t\leq T}\|X^c(t)\|_{\ell^2}\leq\tfrac12 .
\]
\end{itemize}
Steps 1 and 2 are carried out below, while Step~3 is omitted for brevity.\footnote{In the PDE setting, this step can be quite delicate and typically requires extra input beyond critical smallness of the force. In the Obukhov setting, the correction would be facilitated by the absence of certain non-local interactions in frequency space.} We begin with some heuristic comments.

\subsection{The two-shell mechanism}
\label{ss:pulse}

The central mechanism is a series of energy transfers. One such transfer, from
shell $k+1$ to shell $k$, is described by the system
\begin{equation}\label{Principal3}
\begin{split}
\frac{d}{dt}\bar X_{k+1} &= - \nu N_{k+1}^2\bar X_{k+1}  - N_{k}^\alpha  X_{k}^p \bar X_{k+1},\\
\frac{d}{dt}X_k^p &= - \nu N_k^2X_k^p + N_k^\alpha \bar X_{k+1}^2 - N_{k-1}^\alpha \bar X_{k-1} X_k^p,\\
\frac{d}{dt}\bar X_{k-1} &= - \nu N_{k-1}^2\bar X_{k-1} + N_{k-1}^\alpha (X_{k}^p)^2,
\end{split}
\end{equation}
obtained from \eqref{Obukhov} by retaining only the interactions among
three consecutive shells.  A direct computation gives
\begin{equation}\label{triple-energy}
\frac{d}{dt}
\left(
\bar X_{k-1}^2+(X_k^p)^2+\bar X_{k+1}^2
\right)
=
-2\nu
\left(
N_{k-1}^2\bar X_{k-1}^2
+N_k^2(X_k^p)^2
+N_{k+1}^2\bar X_{k+1}^2
\right),
\end{equation}
so that, up to dissipation, the triple simply moves a fixed amount of
energy downward. 

\begin{remark}\label{rem:diss-perturbative}
In order to produce an almost complete transfer of energy at each step in the
cascade, the principal part must be arranged in such a way that the
Laplacian term is perturbative. In this way, the higher mode can transfer all of its energy via the negative nonlinear term before the dissipation can
have a significant effect. Writing $t_k$ for the duration of the transfer into shell $k$, this suggests the constraint
\begin{equation}\label{diss-perturbative}
t_k\ll(\nu N_{k+1}^2)^{-1}.
\end{equation}
This forces the construction into the supercritical regime $\alpha>2$.
\end{remark}

Assuming \eqref{diss-perturbative} for the purpose of discussion, we may drop the dissipative terms in
\eqref{Principal3} during the transfer window. Similarly,
$\bar X_{k-1}$ has not yet been excited so its contribution to the nonlinearity can be neglected on this time scale. What remains is the
two-shell system
\begin{equation}\label{Principal2}
\begin{split}
\frac{d}{dt}\bar X_{k+1} &=   - N_{k}^\alpha  X_{k}^p \bar X_{k+1},\\
\frac{d}{dt}X_k^p &=  N_k^\alpha \bar X_{k+1}^2
\end{split}
\end{equation}
with initial data
\[
\bar X_{k+1}(0) = a_{k+1}, \qquad X^p_{k}(0) = 0 .
\]
This system has an exact solution,
\begin{equation}\label{two-shell-orbit}
\begin{split}
X_k^p(t)&=a_{k+1}\tanh(N_k^\alpha a_{k+1}t)\\
\bar X_{k+1}(t)&=a_{k+1}\sech(N_k^\alpha a_{k+1}t).
\end{split}
\end{equation}
The trajectory lies on a circle of radius $a_{k+1}$, consistent with energy
conservation for the pair of modes; thus, at leading order, $a_k=a_{k+1}$,
as expected.  The energy transfer occurs on the time scale
$(N_k^\alpha a_{k+1})^{-1}$, which by Remark~\ref{rem:diss-perturbative}
outpaces dissipation as long as
\begin{equation}\label{dissipation_is_negligible_constraint}
a_{k+1}\gg \nu\,N_k^{-\alpha}N_{k+1}^{2}.
\end{equation}
For the geometric choice $N_k=\lambda^k$ and amplitudes $a_k\sim1$, the
right-hand side of \eqref{dissipation_is_negligible_constraint} reduces to
$\nu\lambda^2N_k^{2-\alpha}$ which tends to $0$ along the shells as long as $\alpha>2$.  This is the same competition between the nonlinear
transfer time and the dissipative time scale that the Lyapunov argument of
Section~\ref{sec:first_approach} exploits abstractly.

Normalizing $a_k\equiv1$, \eqref{two-shell-orbit} says that shell $k$ is
switched on at time $t\sim N_k^{-\alpha}$ and, one step later, switched off
at time $t\sim N_{k-1}^{-\alpha}$ as its energy passes to shell $k-1$.
Since $\sum_kN_k^{-\alpha}<\infty$, the whole cascade can be completed in
finite time, entering from infinite frequency at $t=0$. The resulting dynamics are shown in Figures~\ref{fig:cascade} and~\ref{fig:cascade-numerical}.

\begin{figure}[ht]
\centering
\begin{tikzpicture}
    \draw[thick] (-0.5,0) -- (11.2,0);

    % Time 0
    \draw[thick] (-0.5,0.1) -- (-0.5,-0.1);
    \node[below] at (-0.5,-0.2) {$0$};
    \node at (0.25,-0.45) {$\cdots$};

    % Ticks
    \draw[thick] (1,0.1) -- (1,-0.1);
    \node[below] at (1,-0.2) {\small $N_{k+1}^{-\alpha}$};
    \draw[thick] (3.25,0.1) -- (3.25,-0.1);
    \node[below] at (3.25,-0.2) {\small $T_{k+1}$};
    \draw[thick] (5.5,0.1) -- (5.5,-0.1);
    \node[below] at (5.5,-0.2) {\small $N_{k}^{-\alpha}$};
    \draw[thick] (7.75,0.1) -- (7.75,-0.1);
    \node[below] at (7.75,-0.2) {\small $T_{k}$};
    \draw[thick] (10,0.1) -- (10,-0.1);
    \node[below] at (10,-0.2) {\small $N_{k-1}^{-\alpha}$};
    \node at (10.75,-0.45) {$\cdots$};
    \draw[decorate,decoration={brace,amplitude=5pt,mirror}] (1,-1.05) -- (5.45,-1.05);
    \node[below] at (3.25,-1.3) {\small $X^p_{k+1}\approx1$};
    \draw[decorate,decoration={brace,amplitude=5pt,mirror}] (5.55,-1.05) -- (10,-1.05);
    \node[below] at (7.75,-1.3) {\small $X^p_{k}\approx1$};

    % Transfers
    \draw[decorate,decoration={brace,amplitude=3pt}] (0.35,0.3) -- (1.65,0.3);
    \node[above] at (1,0.5) {\small $X^p_{k+2}\rightsquigarrow X^p_{k+1}$};
    \draw[decorate,decoration={brace,amplitude=3pt}] (4.85,0.3) -- (6.15,0.3);
    \node[above] at (5.5,0.5) {\small $X^p_{k+1}\rightsquigarrow X^p_{k}$};
\end{tikzpicture}
\caption{The inverse cascade carried by the principal part.  Shell $k$ is
excited on the time scale $N_k^{-\alpha}$ and relinquishes its energy to
shell $k-1$ on the time scale $N_{k-1}^{-\alpha}$. The intermediate times $T_k$, at which the
ansatz \eqref{Simple_PP} switches between its two branches, satisfy
$N_k^{-\alpha}\ll T_k\ll N_{k-1}^{-\alpha}$.}
\label{fig:cascade}
\end{figure}

\begin{figure}[ht]
\centering
\includegraphics{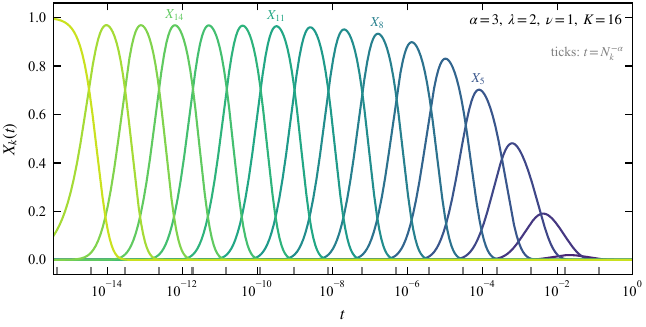}
\caption{The inverse cascade of Theorem~\ref{th:alpha>2}, computed from the
Galerkin system \eqref{eq:galerkin} with $\alpha=3$, $\lambda=2$, $\nu=1$,
$K=16$ and $F_K=1$. Each curve is one shell $X_k$. Boundedly-many shells are significantly activated at a given time, and shell $k$ switches on at
$t\approx2.7\,N_k^{-\alpha}$.}
\label{fig:cascade-numerical}
\end{figure}

\subsection{The principal part}
\label{ss:principal-part}

If the scales are sufficiently separated, the pairs
$(X_k^p,\bar X_{k+1})$, $(X_{k+1}^p,\bar X_{k+2})$, etc.\ decouple, and the
principal part can be defined very simply by gluing together the two
branches of \eqref{two-shell-orbit}.  We therefore define
\begin{equation}\label{Simple_PP}
X_k^p=\chi_k^1\tanh(N_k^\alpha t)+\chi_k^2\sech(N_{k-1}^\alpha t),
\qquad k\geq1,
\end{equation}
where the times $T_k$ satisfy
\begin{equation}\label{cutoff_time_constraint}
N_k^{-\alpha}\ll T_k \ll N_{k-1}^{-\alpha},
\end{equation}
and the cutoffs are chosen so that
\[
\begin{gathered}
0\leq \chi_k^1,\chi_k^2\leq1,
\qquad
\chi_k^2=1-\chi_k^1,
\qquad
\|\partial_t\chi_k^i\|_\infty\lesssim T_k^{-1},\quad i=1,2,\\
\chi_k^1=1 \text{ on } [0,T_k],
\qquad
\supp\chi_k^1\subset[0,2T_k].
\end{gathered}
\]
In particular, $0\leq X^p_k\leq1$. The identities used below,
\begin{equation}\label{cutoff-identities}
\chi_{k-1}^1\chi_k^1\equiv\chi_k^1,\quad
\chi_{k-1}^2\chi_k^1\equiv0,\quad
\chi_{k-1}^2\chi_k^2\equiv\chi_{k-1}^2,
\end{equation}
hold as soon as the cutoffs are nested, that is, $2T_k\leq T_{k-1}$ for the
shells under consideration.

For the residual estimate of Subsection~\ref{ss:error} to be small, we
impose the following quantitative scale separation conditions: for some
sufficiently large $k_0$,
\begin{equation}\label{delta-star}
\delta_*
:=
\sup_{k\geq k_0}
\left(
N_k^\alpha T_{k+1}
+
(N_{k-1}^\alpha T_k)^2
+
\nu N_k^2N_{k-1}^{-\alpha}
\right)
\ll1,
\end{equation}
and
\begin{equation}\label{eta-star}
\eta_*
:=
\sup_{k\geq k_0}
\left(
e^{-N_k^\alpha T_k}
+
e^{-N_{k-1}^\alpha T_{k-1}}
\right)
\ll1 .
\end{equation}
For $N_k=\lambda^k$, one may take
\begin{equation}\label{T-choice}
T_k=\delta N_{k-1}^{-\alpha},
\qquad
\lambda^{-\alpha}\ll\delta\ll1,
\end{equation}
with the shell ratio large enough to leave room for such a $\delta$; note
that \eqref{T-choice} forces $\lambda^\alpha\geq2$, so the cutoffs are
automatically nested and \eqref{cutoff-identities} holds.  This choice gives
\[
N_k^\alpha T_{k+1}=\delta,
\qquad
(N_{k-1}^\alpha T_k)^2=\delta^2,
\qquad
e^{-N_k^\alpha T_k}=e^{-N_{k-1}^\alpha T_{k-1}}=e^{-\delta\lambda^\alpha},
\]
so that $\eta_*\ll1$ and the first two terms in $\delta_*$ are $O(\delta)$.
Choosing $k_0$ so large that
\[
\nu N_k^2N_{k-1}^{-\alpha}=\nu\lambda^\alpha N_k^{2-\alpha}\ll1
\qquad\text{for }k\geq k_0
\]
(using that $\alpha>2$), then gives
$\delta_*,\eta_*\ll1$.  For $1\leq k<k_0$, we lose scale separation and the ansatz \eqref{Simple_PP} is
not justified; instead, these modes are allowed to evolve by
\eqref{Obukhov}, with their exact behavior not affecting the desired conclusions
\eqref{second-approach-goal} near $t=0$.

We now verify Step 1 for the ansatz \eqref{Simple_PP}.

\begin{lemma}\label{lem:principal-properties}
Assume \eqref{T-choice}.  Then $0\leq X^p_k\leq1$ for all $k$ and $t$, and:
\begin{itemize}[left=1em]
\item[(i)] for every fixed $k\geq k_0$ one has
$X^p_k(t)=\tanh(N_k^\alpha t)$ for $0\leq t\leq T_k$, and in particular
$X^p_k(t)\to0$ as $t\to0+$;
\item[(ii)] $X^p_k(N_k^{-\alpha})>\tfrac12$;
\item[(iii)] there is $c>0$ such that
\[
\bigl|\|X^p(t)\|_{\ell^2}^2-1\bigr|
\lesssim
\delta^2+e^{-c\delta\lambda^\alpha},
\qquad 0<t\leq T_{k_0}.
\]
\end{itemize}
\end{lemma}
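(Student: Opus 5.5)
The bounds $0\le X_k^p\le 1$, together with (i) and (ii), follow directly from the definition \eqref{Simple_PP}. For (iii), the plan is to identify, for each $0<t\le T_{k_0}$, the at most two shells that carry the energy and to bound the contributions of all others by tails of $\tanh$ and $\sech$.

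\emph{Pointwise bounds.} Since $\chi_k^1+\chi_k^2=1$ with both cutoffs nonnegative, $X_k^p$ is a convex combination of $\tanh(N_k^\alpha t)$ and $\sech(N_{k-1}^\alpha t)$, both of which lie in $[0,1]$. Hence $0\le X_k^p\le1$.

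\emph{Item (i).} On $[0,T_k]$ we have $\chi_k^1=1$ and $\chi_k^2=0$, so $X_k^p(t)=\tanh(N_k^\alpha t)\to0$ as $t\to0+$.

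\emph{Item (ii).} By \eqref{T-choice}, $T_k=\delta\lambda^\alpha N_k^{-\alpha}\ge N_k^{-\alpha}$ once $\delta\lambda^\alpha\ge1$, which holds under $\lambda^{-\alpha}\ll\delta$. Therefore $X_k^p(N_k^{-\alpha})=\tanh 1>\tfrac12$.

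\emph{Item (iii).} Fix $t\in(0,T_{k_0}]$. Because the cutoffs are nested ($2T_{k+1}\le T_k$) and the $T_k$ decrease to $0$, there is a unique $j\ge k_0$ with $T_{j+1}<t\le T_j$. I will split the shells into three groups.
\begin{itemize}
\item \emph{Shells $k\le j$.} Here $t\le T_j\le T_k$, so $X_k^p=\tanh(N_k^\alpha t)$. Shell $j$ is the "incoming" shell. For $k<j$ I use $\tanh x\le x$ to get $X_k^p\le N_k^\alpha t\le N_k^\alpha T_j=\delta\lambda^{-\alpha(j-1-k)}$. Squaring and summing over $k\le j-1$ gives $\lesssim\delta^2$.
\item \emph{Shell $j+1$ (the "outgoing" shell).} Here $t>T_{j+1}$, so $\chi_{j+1}^1=1$ is no longer guaranteed. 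On $[T_{j+1},2T_{j+1}]$ the function $X_{j+1}^p$ is a convex combination of $\tanh(N_{j+1}^\alpha t)\ge1-2e^{-2N_{j+1}^\alpha T_{j+1}}$ and $\sech(N_j^\alpha t)$. Beyond $2T_{j+1}$ it equals $\sech(N_j^\alpha t)$.
\item \emph{Shells $k\ge j+2$.} Here $t>T_{j+1}\ge 2T_k$, so $X_k^p=\sech(N_{k-1}^\alpha t)$. Since $N_{k-1}^\alpha t\ge N_{k-1}^\alpha T_{j+1}=\delta\lambda^{\alpha(k-j-1)}$ with $k-j-1\ge1$, we get $X_k^p\le 2e^{-\delta\lambda^{\alpha(k-j-1)}}$. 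Squaring and summing gives $\lesssim e^{-c\delta\lambda^\alpha}$, because the exponents grow geometrically and $\delta\lambda^\alpha\gg1$.
\end{itemize}

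\emph{The two active shells.} It then remains to show
\[
\tanh^2(N_j^\alpha t)+(X_{j+1}^p)^2=1+O\bigl(\delta^2+e^{-c\delta\lambda^\alpha}\bigr).
\]
The key identity is $\tanh^2 x+\sech^2 x=1$. Note that $N_j^\alpha t$ is the same argument that appears in $\sech(N_j^\alpha t)$ in shell $j+1$. So whenever $X_{j+1}^p=\sech(N_j^\alpha t)$, the two terms sum to exactly $1$.

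On the transition window $[T_{j+1},2T_{j+1}]$ this exact identity is unavailable. There, however, $N_j^\alpha t\le 2N_j^\alpha T_{j+1}=2\delta$, so:
\begin{itemize}
\item $\tanh^2(N_j^\alpha t)\le4\delta^2$;
\item $\sech(N_j^\alpha t)\ge1-2\delta^2$;
\item $\tanh(N_{j+1}^\alpha t)\ge1-2e^{-2\delta\lambda^\alpha}$.
\end{itemize}
Hence every convex combination for $X_{j+1}^p$ lies within $O(\delta^2+e^{-c\delta\lambda^\alpha})$ of $1$, and so does the sum of the two squares.

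\emph{Main obstacle.} The delicate point is the bookkeeping of which cutoff branch is active in each shell at a given $t$. The nesting identities \eqref{cutoff-identities} are exactly what guarantee that only the transition window of shell $j+1$ is ever mixed. I expect the rest to reduce to the elementary $\tanh$/$\sech$ tail bounds above, combined with the relations $N_k^\alpha T_{k+1}=\delta$ and $N_k^\alpha T_k=\delta\lambda^\alpha$.
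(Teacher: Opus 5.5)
Your proposal is correct and follows essentially the same route as the paper: locate $t$ between consecutive switching times, note that the lower shells sit on the $\tanh$ branch and the higher shells on the $\sech$ branch, pair shell $j$ with shell $j+1$ through $\tanh^2+\sech^2=1$, and treat the transition window $[T_{j+1},2T_{j+1}]$ separately using $N_j^\alpha t\le 2\delta$ and $N_{j+1}^\alpha t\ge\delta\lambda^\alpha$. The only difference is organizational: you group the shells, while the paper splits into the cases $t\ge 2T_{m+1}$ and $t<2T_{m+1}$, and this changes nothing of substance.
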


\begin{proof}
The claim (i) is immediate from $\chi^1_k=1$ on $[0,T_k]$, and (ii) follows since
$N_k^{-\alpha}\leq T_k$ by \eqref{T-choice}.

For (iii), fix $0<t\leq T_{k_0}$ and let $m\geq k_0$ be the unique index
with $T_{m+1}\leq t<T_m$; set $\theta:=N_m^\alpha t$, so that
$\theta\in[\delta,\delta\lambda^\alpha)$ by \eqref{T-choice}.

Suppose first that $t\geq2T_{m+1}$.  For $j\leq m$ we have $t\leq T_m\leq T_j$,
hence $X^p_j=\tanh(N_j^\alpha t)$; for $j\geq m+1$ we have $t\geq2T_{m+1}\geq2T_j$,
hence $X^p_j=\sech(N_{j-1}^\alpha t)$.  Since $N_j^\alpha t=\theta\lambda^{\alpha(j-m)}$,
\[
\|X^p(t)\|_{\ell^2}^2
=
\sum_{i\geq0}\tanh^2\!\bigl(\theta\lambda^{-\alpha i}\bigr)
+
\sum_{i\geq0}\sech^2\!\bigl(\theta\lambda^{\alpha i}\bigr).
\]
The $i=0$ terms sum to $\tanh^2\theta+\sech^2\theta=1$.  In the first sum
the terms with $i\geq1$ obey
$\tanh^2(\theta\lambda^{-\alpha i})\leq(\theta\lambda^{-\alpha i})^2\lesssim\delta^2\lambda^{-2\alpha(i-1)}$,
using $\theta\lambda^{-\alpha}<\delta$; in the second, the terms with
$i\geq1$ obey $\sech^2(\theta\lambda^{\alpha i})\lesssim e^{-2\delta\lambda^{\alpha i}}$.
Both tails are therefore $O(\delta^2)+O(e^{-2\delta\lambda^\alpha})$.

If instead $T_{m+1}\leq t<2T_{m+1}$, then $N_{m+1}^\alpha t\geq\delta\lambda^\alpha\gg1$
and $N_m^\alpha t\in[\delta,2\delta)$, so
\[
X^p_{m+1}
=\chi^1_{m+1}\tanh(N_{m+1}^\alpha t)+\chi^2_{m+1}\sech(N_m^\alpha t)
=1+O(\delta^2)+O(e^{-\delta\lambda^\alpha}),
\]
because both branches are within $O(\delta^2)+O(e^{-\delta\lambda^\alpha})$
of $1$ there and $\chi^1_{m+1}+\chi^2_{m+1}=1$.  The remaining shells are
estimated exactly as before.  This proves (iii).
\end{proof}

Thus, at any small time, exactly one shell is excited, and the excited shell
carries all but $O(\delta^2)$ units of energy.

\subsection{The residual}
\label{ss:error}

Write $Q$ for the nonlinearity in \eqref{Obukhov},
\begin{equation}\label{Qdef}
Q(Y)_k:=N_k^\alpha Y_{k+1}^2-N_{k-1}^\alpha Y_{k-1}Y_k,
\qquad Y_0:=0,
\end{equation}
so that \eqref{Obukhov} reads $X_k'=-\nu N_k^2X_k+Q(X)_k$. Step 2 can be executed as follows.

\begin{proposition}\label{prop:residual}
Assume $\alpha>2$, $N_k=\lambda^k$ and \eqref{cutoff_time_constraint}, and
let $\delta_*,\eta_*$ be as in \eqref{delta-star}--\eqref{eta-star}.  For
every $k\geq k_0$, the principal part \eqref{Simple_PP} satisfies
\[
(X_k^p)'
=
-\nu N_k^2X_k^p
+
Q(X^p)_k
+
f_k ,
\qquad
X_0^p=0,
\]
where
\[
\|f_k\|_{L^1([0,\infty))}
\lesssim
N_k^\alpha T_{k+1}
+
(N_{k-1}^\alpha T_k)^2
+
\nu N_k^2N_{k-1}^{-\alpha}
+
e^{-N_k^\alpha T_k}
+
e^{-N_{k-1}^\alpha T_{k-1}}
\;\lesssim\;
\delta_*+\eta_* .
\]
\end{proposition}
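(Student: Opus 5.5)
The plan is a direct computation: write $f_k:=(X_k^p)'+\nu N_k^2X_k^p-Q(X^p)_k$ and split the time axis according to which branches of \eqref{Simple_PP} are active in shells $k-1,k,k+1$. I would use the nesting identities \eqref{cutoff-identities} and $2T_{k+1}\le T_k$, $2T_k\le T_{k-1}$. Differentiating \eqref{Simple_PP} gives
\[
(X_k^p)'=\chi_k^1N_k^\alpha\sech^2(N_k^\alpha t)-\chi_k^2N_{k-1}^\alpha\tanh(N_{k-1}^\alpha t)\sech(N_{k-1}^\alpha t)+(\chi_k^1)'\bigl(\tanh(N_k^\alpha t)-\sech(N_{k-1}^\alpha t)\bigr).
\]
The two exact identities behind the construction are the two-shell orbit \eqref{two-shell-orbit}: $\frac{d}{dt}\tanh(N_k^\alpha t)=N_k^\alpha\sech^2(N_k^\alpha t)$ and $\frac{d}{dt}\sech(N_{k-1}^\alpha t)=-N_{k-1}^\alpha\tanh(N_{k-1}^\alpha t)\sech(N_{k-1}^\alpha t)$. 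So the first term is the "gain'' $N_k^\alpha(X_{k+1}^p)^2$ whenever $X_{k+1}^p$ sits on its $\sech(N_k^\alpha t)$ branch. The second term is the "loss'' $-N_{k-1}^\alpha X_{k-1}^pX_k^p$ whenever $X_{k-1}^p$ sits on its $\tanh(N_{k-1}^\alpha t)$ branch. Then $f_k$ is a sum of mismatch terms, which I would bound in $L^1_t$ one at a time.

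\emph{Viscous term.} Since $0\le X_k^p\le1$, $X_k^p=0$ at $t=0$, and $X_k^p\le\sech(N_{k-1}^\alpha t)\lesssim e^{-N_{k-1}^\alpha t}$ for $t\ge2T_k$, we get $\int\nu N_k^2X_k^p\lesssim\nu N_k^2(T_k+N_{k-1}^{-\alpha})\lesssim\nu N_k^2N_{k-1}^{-\alpha}$.

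\emph{Cutoff term.} $(\chi_k^1)'$ is supported in $[T_k,2T_k]$ and $\int|(\chi_k^1)'|\lesssim1$. There $1-\tanh(N_k^\alpha t)\lesssim e^{-2N_k^\alpha T_k}$ and $1-\sech(N_{k-1}^\alpha t)\lesssim(N_{k-1}^\alpha T_k)^2$. This gives $e^{-N_k^\alpha T_k}+(N_{k-1}^\alpha T_k)^2$.

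\emph{Gain mismatch.} The gain term $\chi_k^1N_k^\alpha\sech^2(N_k^\alpha t)$ is compared with $N_k^\alpha(X_{k+1}^p)^2$.
\begin{itemize}
\item For $t\le2T_{k+1}$, shell $k+1$ may still be on its $\tanh$ branch. Both quantities are $\le N_k^\alpha$, so this costs $\lesssim N_k^\alpha T_{k+1}$.
\item For $2T_{k+1}\le t\le 2T_k$, $X_{k+1}^p=\sech(N_k^\alpha t)$ exactly. The only discrepancy is the factor $\chi_k^1$, on $[T_k,2T_k]$ and beyond, where $N_k^\alpha(X_{k+1}^p)^2=N_k^\alpha\sech^2(N_k^\alpha t)$ is not cancelled.
\item That uncancelled piece integrates over $t\ge T_k$ to $\lesssim e^{-2N_k^\alpha T_k}$.
\end{itemize}

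\emph{Loss mismatch.} For $t\le2T_k$, the $\chi_k^1$ branch carries no loss term in $(X_k^p)'$, but $Q$ contains $-N_{k-1}^\alpha X_{k-1}^pX_k^p$. There $X_{k-1}^p=\tanh(N_{k-1}^\alpha t)\le N_{k-1}^\alpha t$, so the integral is $\lesssim(N_{k-1}^\alpha T_k)^2$. On the $\chi_k^2$ branch the cancellation is exact while $X_{k-1}^p=\tanh(N_{k-1}^\alpha t)$, i.e.\ for $t\le T_{k-1}$. For $t\ge T_{k-1}$, $X_k^p=\sech(N_{k-1}^\alpha t)\lesssim e^{-N_{k-1}^\alpha t}$ and $X_{k-1}^p\le1$. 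The leftover is $\lesssim\int_{T_{k-1}}^\infty N_{k-1}^\alpha e^{-N_{k-1}^\alpha t}\,dt=e^{-N_{k-1}^\alpha T_{k-1}}$.

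Summing these contributions gives the stated bound, and the final inequality $\lesssim\delta_*+\eta_*$ is immediate from \eqref{delta-star}--\eqref{eta-star}.

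The main obstacle I expect is bookkeeping rather than analysis. One has to check that the nesting $2T_{k+1}\le T_k$, $2T_k\le T_{k-1}$ really puts the neighbouring shells on the claimed branches in each window, including the transition window $[T_{k-1},2T_{k-1}]$ of shell $k-1$, where $X_{k-1}^p$ is a convex combination. There I would bound everything crudely using $X_k^p\lesssim e^{-N_{k-1}^\alpha T_{k-1}}$.
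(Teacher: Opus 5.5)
Your proposal is correct and follows essentially the same route as the paper. You substitute \eqref{Simple_PP}, use the nesting of the cutoffs (which follows from \eqref{cutoff_time_constraint}) to decide which branch each neighbouring shell is on in each time window, and bound the resulting mismatches in $L^1_t$ term by term. Your ``gain'' and ``loss'' mismatches are only a regrouping of the paper's $f_k^{interact}$, $f_k^{NL_1}$ and $f_k^{NL_2}$, while $f_k^{diss}$ and $f_k^{cutoff}$ are handled the same way, and each of your window-by-window estimates matches the corresponding bound in the paper.
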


\begin{proof}
Insert \eqref{Simple_PP} into $(X^p_k)'+\nu N_k^2X^p_k-Q(X^p)_k$ and use the
cutoff identities \eqref{cutoff-identities} together with
$(\chi^1_k)'=-(\chi^2_k)'$.  Grouping the resulting terms according to their
origin gives
\[
f_k=f_k^{interact}+f_k^{diss}+f_k^{cutoff}+f_k^{NL_1}+f_k^{NL_2},
\]
where
\[
f_k^{interact}=N_k^\alpha(\chi_k^1-(\chi_{k+1}^2)^2)\sech^2(N_k^\alpha t)-N_{k-1}^\alpha\chi_{k-1}^2\sech(N_{k-1}^\alpha t)\tanh(N_{k-1}^\alpha t)
\]
collects the terms in which the time derivative hits the main part of
$X_k^p$ and which cancel against the ``important'' nonlinear interactions
appearing in \eqref{Principal2}; here the second term uses
$\chi^1_{k-1}\chi^2_k=\chi^2_k-\chi^2_{k-1}$, a consequence of
\eqref{cutoff-identities}.  Next,
\[
f_k^{diss}=\nu N_k^2\chi_k^1\tanh(N_k^\alpha t)+\nu N_k^2\chi_k^2\sech(N_{k-1}^\alpha t)
=\nu N_k^2X^p_k
\]
is the term from the Laplacian;
\[
f_k^{cutoff}=(\chi_k^1)'(\tanh(N_k^\alpha t)-\sech(N_{k-1}^\alpha t))
\]
appears when the time derivative hits the cutoff in \eqref{Simple_PP};
\[
f_k^{NL_1}=-N_k^\alpha(\chi_{k+1}^1)^2\tanh^2(N_{k+1}^\alpha t)-2N_k^\alpha\chi_{k+1}^1\chi_{k+1}^2\tanh(N_{k+1}^\alpha t)\sech(N_k^\alpha t)
\]
are the additional error terms from the first nonlinear term in
\eqref{Obukhov}; and
\[
f_k^{NL_2}=N_{k-1}^\alpha\chi_k^1\tanh(N_{k-1}^\alpha t)\tanh(N_k^\alpha t)+N_{k-1}^\alpha\chi_{k-1}^2\sech(N_{k-2}^\alpha t)\sech(N_{k-1}^\alpha t)
\]
are the additional error terms from the second nonlinear term in
\eqref{Obukhov}.

We now estimate the terms, using repeatedly the elementary bounds
\begin{equation}\label{elementary-bounds}
0\leq1-\tanh x\leq2e^{-2x},
\qquad
0\leq1-\sech x\leq \tfrac12x^2,
\qquad
\sech x\leq2e^{-x},
\qquad x\geq0 .
\end{equation}

The first term in $f_k^{interact}$ has a coefficient which vanishes on
$[2T_{k+1},T_k]$, while that of the second term is supported in
$[T_{k-1},\infty)$.  Then it is elementary to bound
\[
|f_k^{interact}(t)|\lesssim N_k^\alpha (\mathbf{1}_{[0,2T_{k+1}]}+e^{-N_k^\alpha t}\mathbf{1}_{[T_k,\infty)})+N_{k-1}^\alpha e^{-N_{k-1}^\alpha t}\mathbf{1}_{[T_{k-1},\infty)},
\]
from which we obtain
\[
\|f_k^{interact}\|_{L^1}\lesssim N_k^\alpha T_{k+1}+e^{-N_k^\alpha T_k}+e^{-N_{k-1}^\alpha T_{k-1}}.
\]

Next, using $\supp\chi_k^1\subset[0,2T_k]$, $\chi_k^2=0$ on $[0,T_k]$, and
$T_k\ll N_{k-1}^{-\alpha}$,
\begin{equation*}
\begin{split}
\|f_k^{diss}\|_{L^1}
&\lesssim
\nu N_k^2\left(T_k+
\int_{T_k}^\infty e^{-N_{k-1}^\alpha t}\,dt\right)\\
&\lesssim \nu N_k^2N_{k-1}^{-\alpha}.
\end{split}
\end{equation*}

Using that $\|\partial_t\chi_k^i\|_\infty\lesssim T_k^{-1}$ and
\eqref{elementary-bounds},
\begin{align*}
|f_k^{cutoff}(t)|&\lesssim T_k^{-1}(|\tanh(N_k^\alpha t)-1|+|\sech(N_{k-1}^\alpha t)-1|)\mathbf{1}_{[T_k,2T_k]}(t)\\
&\lesssim T_k^{-1}(e^{-2N_k^\alpha T_k}+(N_{k-1}^\alpha T_k)^2)\mathbf{1}_{[T_k,2T_k]}(t)
\end{align*}
and hence
\[
\|f_k^{cutoff}\|_{L^1}\lesssim e^{-2N_k^\alpha T_k}+(N_{k-1}^\alpha T_k)^2.
\]
Smallness of $f_k^{NL_1}$ is a result of the smaller coefficient:
\[
|f_k^{NL_1}(t)|\lesssim N_k^\alpha\mathbf{1}_{[0,2T_{k+1}]},
\qquad
\|f_k^{NL_1}\|_{L^1}\lesssim N_k^\alpha T_{k+1}.
\]
Finally, for $f_k^{NL_2}$ we find
\[
|f_k^{NL_2}(t)|\lesssim N_{k-1}^{2\alpha}t\, \mathbf{1}_{[0,2T_k]}(t)+N_{k-1}^\alpha e^{-N_{k-1}^\alpha t}\mathbf{1}_{[T_{k-1},\infty)}(t),
\]
\[
\|f_k^{NL_2}\|_{L^1}\lesssim (T_kN_{k-1}^\alpha)^2+e^{-N_{k-1}^\alpha T_{k-1}}.
\]
Summing the estimates concludes the proof.
\end{proof}

The componentwise bound of Proposition~\ref{prop:residual} is not by itself
enough to control the corrector, since it is not summable in $k$.  The same
pointwise estimates, however, yield a bound on the whole residual in the
scale-invariant norm suggested by Figure~\ref{fig:cascade}.

\begin{corollary}\label{cor:residual-scale-invariant}
Under the hypotheses of Proposition~\ref{prop:residual} and with the choice
\eqref{T-choice},
\begin{equation}\label{residual-scale-invariant}
\sup_{0<t\leq T_{k_0}}\ t\,\|f(t)\|_{\ell^2}
\ \lesssim\
\delta_*+\eta_*^{1/2}.
\end{equation}
\end{corollary}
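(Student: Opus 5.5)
The plan is to reuse the pointwise-in-time bounds on the five pieces $f_k^{interact},f_k^{diss},f_k^{cutoff},f_k^{NL_1},f_k^{NL_2}$ that were derived in the proof of Proposition~\ref{prop:residual}, before they were integrated. We fix $0<t\leq T_{k_0}$ and the index $m\geq k_0$ with $T_{m+1}\leq t<T_m$, exactly as in Lemma~\ref{lem:principal-properties}(iii). With the choice \eqref{T-choice}, this means $t\sim \theta N_m^{-\alpha}$ with $\theta\in[\delta,\delta\lambda^\alpha)$. We then bound $t^2\sum_k |f_k(t)|^2$ by splitting the shells into three groups: $k\leq m-1$ (low shells, not yet or only weakly excited), $k\in\{m,m+1,m+2\}$ (the active window), and $k\geq m+3$ (high shells, already switched off). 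The goal is to show that each group contributes $O(\delta_*^2+\eta_*)$. Taking square roots then gives \eqref{residual-scale-invariant}, and the square root on $\eta_*$ comes from summing exponentially small terms squared.

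For the active window, each pointwise bound has the form (rate)$\times$(indicator). Multiplying by $t$ converts the rate into a dimensionless quantity. The relevant cases are as follows.
\begin{itemize}
\item $f_k^{NL_1}$ and the first part of $f_k^{interact}$ are supported on $[0,2T_{k+1}]$ and bounded by $N_k^\alpha$. Hence $t|f_k|\lesssim N_k^\alpha T_{k+1}\leq\delta_*$.
\item $f_k^{NL_2}$ is bounded by $N_{k-1}^{2\alpha}t$ on $[0,2T_k]$. Hence $t|f_k|\lesssim (N_{k-1}^\alpha T_k)^2\leq\delta_*$.
\item $f_k^{cutoff}$ is supported on $[T_k,2T_k]$, where $t\sim T_k$. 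This gives $t|f_k|\lesssim e^{-2N_k^\alpha T_k}+(N_{k-1}^\alpha T_k)^2\lesssim \eta_*^2+\delta_*$.
\item $f_k^{diss}=\nu N_k^2X_k^p$ satisfies $t|f_k^{diss}|\lesssim \nu N_k^2 t$. This is $\lesssim \nu N_k^2N_{k-1}^{-\alpha}\leq\delta_*$ as long as $t\lesssim N_{k-1}^{-\alpha}$, which holds for $k\leq m+2$ up to harmless factors of $\lambda^\alpha$ absorbed into $k_0$.
\item The exponential tails, e.g.\ $N_{k-1}^\alpha e^{-N_{k-1}^\alpha t}\mathbf 1_{[T_{k-1},\infty)}$, satisfy $t|f|\lesssim xe^{-x}$ with $x=N_{k-1}^\alpha t\geq N_{k-1}^\alpha T_{k-1}$. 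This is $\lesssim \eta_*^{1/2}$, using $xe^{-x}\lesssim e^{-x/2}$.
\end{itemize}

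For the low shells $k\leq m-1$, we have $t<T_m\leq T_k$, so only the tanh branch is present and the supports $[0,2T_{k+1}]$ etc.\ matter. The terms $f^{NL_1}$ and $f^{interact}$ vanish unless $t\leq 2T_{k+1}$, which happens only for $k=m-1$ and is already handled above. The surviving terms are $f^{diss}$ and $f^{NL_2}$. We bound $t|f_k^{NL_2}|\lesssim (N_{k-1}^\alpha t)^2=\theta^2\lambda^{-2\alpha(m-k+1)}$ and $t|f_k^{diss}|\lesssim \nu N_k^2 N_k^\alpha t^2$. Both decay geometrically as $k$ decreases from $m$, so their squares sum to $O(\delta^4+\delta_*^2)$.

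For the high shells $k\geq m+3$, we have $t\geq 2T_k$, so only the sech branch is present and all terms are bounded by $N_{k-1}^\alpha e^{-N_{k-1}^\alpha t}$ or $\nu N_k^2 e^{-N_{k-1}^\alpha t}$, with $N_{k-1}^\alpha t\geq \delta\lambda^{\alpha(k-1-m)}$. Multiplying by $t$ gives $y e^{-y}$ with $y$ growing geometrically in $k$. The squared sum is then dominated by its first term and bounded by $e^{-c\delta\lambda^{\alpha}}\lesssim\eta_*$. This yields a contribution $\eta_*^{1/2}$ after the square root, which explains the exponent in the statement.

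The main obstacle is the bookkeeping in the transition window $T_{m+1}\leq t<2T_{m+1}$. There two cutoffs are simultaneously nonconstant and the indicator supports of neighbouring shells overlap, so one must check that no term of size $O(1)$ in $t|f_k|$ survives. I would handle this exactly as in Lemma~\ref{lem:principal-properties}(iii), by listing which branch each shell $k\in\{m,m+1,m+2\}$ is on and using the cancellation already built into $f_k^{interact}$ via \eqref{cutoff-identities}. The remaining shell-by-shell estimates are routine geometric sums.
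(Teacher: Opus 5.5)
Your strategy is the same as the paper's sketch. You multiply the pointwise bounds from the proof of Proposition~\ref{prop:residual} by $t$, sum geometrically in $k$ around the active index $m$, and use $se^{-s}\lesssim e^{-s/2}$ on $[T_k,\infty)$ to produce the $\eta_*^{1/2}$. However, your low-shell bookkeeping contains a real error, and as written it fails to control an unbounded number of nonzero terms.

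\textbf{The support direction is reversed.} Since $T_k=\delta N_{k-1}^{-\alpha}$ is \emph{decreasing} in $k$, every $k\leq m-1$ satisfies $t<T_m\leq T_{k+1}<2T_{k+1}$. So the indicator $\mathbf 1_{[0,2T_{k+1}]}(t)$ is nonzero for \emph{all} low shells. It is the shells $k\geq m+1$ for which it vanishes, since $2T_{k+1}\leq 2T_{m+2}\leq T_{m+1}\leq t$. These terms are genuinely present. For $k\leq m-1$ one has $\chi_k^1=1$ and $\chi_{k+1}^2=0$, hence $f_k^{interact}=N_k^\alpha\sech^2(N_k^\alpha t)\approx N_k^\alpha$ and $f_k^{NL_1}=-N_k^\alpha\tanh^2(N_{k+1}^\alpha t)$. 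This reflects that the ansatz switches shell $k$ on before shell $k+1$ can feed it.

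\textbf{Why your per-shell bound does not suffice.} Your first bullet gives $t|f_k|\lesssim N_k^\alpha T_{k+1}=\delta$, which is the same constant for every shell. The number of low shells, $m-k_0$, is unbounded as $t\to0+$. Summing squares would therefore only give $\delta\sqrt{m-k_0}$, not a uniform bound.

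\textbf{The fix.} You need the observation that opens the paper's sketch: the coefficient itself decays geometrically as $k$ decreases. For $k\leq m-1$,
$t\bigl(|f_k^{interact}|+|f_k^{NL_1}|\bigr)\lesssim tN_k^\alpha=\theta\lambda^{-\alpha(m-k)}<\delta\lambda^{-\alpha(m-1-k)}$. The squares of these terms sum to $O(\delta^2)$, exactly as you already showed for $f^{NL_2}$ and $f^{diss}$. With this correction the rest of your three-group argument goes through.

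\textbf{A minor point.} In the active window, avoid bounding $X_k^p\leq1$ in $f_k^{diss}$ and absorbing powers of $\lambda^\alpha$ ``into $k_0$''. That does not yield a bound by $\delta_*$ with $\lambda$-independent constants. Instead, for $k\in\{m+1,m+2\}$ with $t\geq2T_k$, keep $X_k^p=\sech(N_{k-1}^\alpha t)$. This gives $t\,\nu N_k^2X_k^p=\nu N_k^2N_{k-1}^{-\alpha}\cdot x\sech x\lesssim\delta_*$ with $x=N_{k-1}^\alpha t$. When $t<2T_{m+1}$, use $t\leq2\delta N_m^{-\alpha}$ directly.
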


\begin{proof}[Sketch]
Fix $t$ and let $m$ be as in the proof of
Lemma~\ref{lem:principal-properties}.  The indicator $\mathbf1_{[0,2T_{k+1}]}(t)$
is nonzero only for $N_k^\alpha\leq2\delta/t$, and for such $k$ the
corresponding coefficients $N_k^\alpha$ and $N_{k-1}^{2\alpha}t$ are
bounded by $2\delta/t$ and $4\delta^2/t$ respectively and decay
geometrically as $k$ decreases; this handles the first parts of
$f^{interact}$, $f^{NL_1}$ and $f^{NL_2}$.  On $[T_k,\infty)$ one has
$N_k^\alpha t\geq\delta\lambda^\alpha$, so
$N_k^\alpha e^{-N_k^\alpha t}\leq t^{-1}\sup_{s\geq\delta\lambda^\alpha}se^{-s}
\lesssim t^{-1}e^{-\delta\lambda^\alpha/2}=t^{-1}\eta_*^{1/2}$, and likewise
for the terms carrying $N_{k-1}^\alpha$.  The cutoff term is supported in
$[T_k,2T_k]$ for at most one value of $k$ (i.e., where $T_k\sim t$), and is
bounded there by $t^{-1}(\delta^2+\eta_*)$.  Finally,
$\|f^{diss}(t)\|_{\ell^2}=\nu\|X^p(t)\|_{H^2}\lesssim\nu N_m^2$ by
Lemma~\ref{lem:principal-properties}, and $t N_m^2\lesssim
N_m^2N_{m-1}^{-\alpha}\leq\nu^{-1}\delta_*$.
\end{proof}

\section{Inviscid blow-up}
\label{s:inviscid}
In the absence of viscosity, i.e. $\nu=0$, we have the inviscid Obukhov model
\begin{equation}\label{Obukhov-inviscid}
X_k'=N_k^\alpha X_{k+1}^2-N_{k-1}^\alpha X_{k-1}X_k,
\qquad k\geq1,
\qquad X_0=0.
\end{equation}
In order to construct an instantaneous blow-up, we need to keep in mind two important differences from the viscous construction. First, finite-dimensional Galerkin solutions conserve energy; second, the previous argument on comparing the nonlinear transfer time with the dissipative time scale is no longer applicable. 

To heuristically obtain the time scale of energy transfer in the inviscid case, we can consider the isolated two-shell system
\[
X_k'=N_k^\alpha X_{k+1}^2,
\qquad
X_{k+1}'=-N_k^\alpha X_kX_{k+1}.
\]
Starting from $X_k(t_0)=0$ and $X_{k+1}(t_0)=a>0$, its exact solution is
\begin{equation}\label{inviscid-two-shell-orbit}
\begin{split}
X_k(t)&=a\tanh\bigl(aN_k^\alpha(t-t_0)\bigr),\\
X_{k+1}(t)&=a\sech\bigl(aN_k^\alpha(t-t_0)\bigr).
\end{split}
\end{equation}
Thus energy is transferred from shell $k+1$ to shell $k$ on the time scale
$a^{-1}N_k^{-\alpha}$.  Since
$\sum_{j\geq k}N_j^{-\alpha}<\infty$ for every $\alpha>0$, one should expect that the inverse cascade can enter from infinity in finite time.

\begin{proof}[Proof of Theorem~\ref{th:inviscid}]
Set $\nu=0$ and $F_K=1$ in \eqref{eq:galerkin}.  The resulting
Galerkin solution satisfies
\begin{equation}\label{inviscid-Galerkin-energy}
\|X^K(t)\|_{\ell^2}=1
\qquad\text{for every }t\geq0.
\end{equation}
Fix $0<q<\alpha$ and define $H_{q,K}$ by \eqref{def-HqK}, with $c>0$
sufficiently small.  The norm equivalence \eqref{H-norm-K} is unchanged.
Repeating the cubic estimates in the proof of Lemma~\ref{le-Lyapunov}, but
omitting the viscous terms, we obtain
\[
-H_{q,K}'(t)
\geq
c_1\sum_{k=2}^{K+1}N_k^{\alpha+2q}(X_k^K(t))^3
-C_1(X_1^K(t))^3,
\]
where $c_1,C_1>0$ are independent of $K$.  By
\eqref{inviscid-Galerkin-energy}, $0\leq X_1^K\leq1$.  On the other hand,
\eqref{H-norm-2K} and \eqref{H-norm-K} imply
\[
H_{q,K}(t)^{3/2} \lesssim\sum_{k=1}^{K+1}N_k^{\alpha+2q}(X_k^K(t))^3
\lesssim 1+\sum_{k=2}^{K+1}N_k^{\alpha+2q}(X_k^K(t))^3.
\]
Consequently,
\begin{equation}\label{inviscid-Lyapunov-inequality}
H_{q,K}'(t) \leq - c_qH_{q,K}(t)^{3/2}+C_q,
\end{equation}
with constants independent of $K$.  Comparison with the scalar differential
inequality yields the uniform positive-time bound
\begin{equation}\label{inviscid-Lyapunov-bound}
H_{q,K}(t)\lesssim_q 1+t^{-2},
\qquad t>0.
\end{equation}
The additive constant in \eqref{inviscid-Lyapunov-inequality} is natural:
after the cascade reaches the low shells, the conserved energy remains there,
so one should not expect $H_{q,K}(t)$ to decay to zero.

By the energy conservation, every component and its time derivative are bounded
uniformly in $K$. A diagonal Arzel\`a--Ascoli argument therefore gives a
subsequence for which
\[
X_k^K\to X_k \quad \text{locally uniformly on }[0,\infty)
\]
for every fixed $k$, and the limit solves \eqref{Obukhov-inviscid}
componentwise.  For every $\delta>0$, \eqref{H-norm-K} and
\eqref{inviscid-Lyapunov-bound} give the uniform tail estimate
\begin{equation}\label{inviscid-energy-tail}
\sup_K\sup_{t\geq\delta} \sum_{k=L}^{K+1}(X_k^K(t))^2
\lesssim_{q,\delta}N_L^{-2q},
\end{equation}
and, consequently,
\[
\sup_K\sup_{t\geq\delta} X_L^K(t)^2 \lesssim_{q,\delta}N_L^{-2q},
\]
which implies
\[
X^K\to  X \quad\text{in }C([\delta,T];\ell^2)
\]
for every $0<\delta<T$. Hence we can pass to the limit in
\eqref{inviscid-Galerkin-energy} and obtain
$\|X(t)\|_{\ell^2}=1$ for every $t>0$.  More generally, if $s<\alpha$, we can choose
$q$ with $\max\{s,0\}<q<\alpha$, and obtain
$X\in C((0,\infty);H^s)$.

For each fixed $k$, using positivity in the Galerkin approximation, we get
\[
0\leq X_k^K(t) \leq N_k^\alpha\int_0^t(X_{k+1}^K(\tau))^2\,d\tau
\leq N_k^\alpha t.
\]
Passing to the limit, we obtain that $X_k(t)\to0$ as $t\to0+$.  If $s<0$, then
for every $L\geq1$,
\[
\|X(t)\|_{H^s}^2 \leq \sum_{k=1}^{L}N_k^{2s}X_k(t)^2
+N_{L+1}^{2s}\sum_{k>L}X_k(t)^2.
\]
The first term tends to zero for fixed $L$, while the second is at most
$N_{L+1}^{2s}$ by energy conservation.  First sending $t\to0+$ and then
$L\to\infty$ proves the first assertion in
\eqref{inviscid-energy-jump}.

It remains to obtain the rate. By the bound above,
\[
\sum_{k=1}^{L}X_k(t)^2 \leq t^2\sum_{k=1}^{L}N_k^{2\alpha}
\lesssim_{\alpha,\lambda}t^2N_L^{2\alpha}.
\]
Choose $\rho>0$ sufficiently small and, for small $t>0$, select $L=L(t)$ so
that
\[
N_L^\alpha t\leq\rho<N_{L+1}^\alpha t.
\]
Then at least one half of the conserved energy lies in the shells $k>L$.
For $0<r<\alpha$ this gives
\[
\|X(t)\|_{H^r}^2 \geq N_{L+1}^{2r}\sum_{k>L}X_k(t)^2 \geq
\frac12\rho^{2r/\alpha}t^{-2r/\alpha},
\]
which implies \eqref{inviscid-blowup-rate}.

\end{proof}

\begin{proof}[Proof of Corollary~\ref{cor:inviscid-forward-blowup}]
Let $X$ be the solution obtained in Theorem~\ref{th:inviscid}, and
let $Q(X)$ denote the right-hand side of \eqref{Obukhov-inviscid}.  Since
$Q$ is quadratic,
\[
Q(-X)=Q(X).
\]
Define
\[
Y(t)=
\begin{cases}
-X(T-t),&0\leq t<T,\\
0,&t\geq T.
\end{cases}
\]
For $0\leq t<T$,
\[
Y'(t)=X'(T-t)=Q(X(T-t))=Q(-Y(t))=Q(Y(t)),
\]
while the equation is immediate for $t>T$.  For each fixed $k$, the proof of
Theorem~\ref{th:inviscid} gives $X_k\in C^1([0,\infty))$ and $X_k(0)=0$.
Since the three components appearing in $Q_k(X(t))$ tend to zero as
$t\to0+$, we also have $X_k'(t)\to0$.  Thus $Y_k$ and $Y_k'$ glue
continuously to zero at $t=T$, proving the componentwise classical assertion.

The regularity in
\eqref{inviscid-forward-regularity} follows directly from
\eqref{inviscid-energy-jump} and the positive-time regularity of $X$. Finally,
if $0<r<\alpha$, take $\delta_r=\min\{T,t_r\}$.  Whenever
$T-\delta_r<t<T$, we have
\[
\|Y(t)\|_{H^r} =\|X(T-t)\|_{H^r} \geq c_r(T-t)^{-r/\alpha}
\]
by \eqref{inviscid-blowup-rate}. Thus \eqref{inviscid-forward-blowup-rate} holds.

\end{proof}

\textbf{Numerical simulations.} Figures~\ref{fig:type1-type2}, \ref{fig:inviscid}, \ref{fig:DN-comparison}, and \ref{fig:cascade-numerical} depict numerical results produced with assistance from Claude Opus 5.  They are obtained as limits of the Galerkin approximations \eqref{eq:galerkin} of \eqref{Obukhov}, using an implicit Runge--Kutta method of Radau IIA type with analytic Jacobian, at relative tolerance $10^{-10}$ and absolute tolerance $10^{-14}$.  Two regimes need a change of variable or a further refinement.  In the energy-subcritical case the components $X_k$ range over many scales, so we integrate instead the normalized system \eqref{normalized-subcritical} for $Y_k=N_k^{\alpha-2}X_k$, whose components are all $O(1)$; the amplitude $A_K$ of Proposition~\ref{prop-critical-flux} is then found by bisecting $\Phi_K(A)=\max_{t\geq0}Y_1^{K,A}(t)$ to the normalization \eqref{critical-shooting-normalization}.

Figure~\ref{fig:cascade-mixed} is computed from the mixed model \eqref{DNO} and requires a different treatment, because the time scales shrink by the factor $2^{\alpha}R\approx4.1$ from one shell to the next, and 20 such shells are computed.  We therefore integrate the renormalized system obtained from \eqref{DNO} near active shell $n$ by the exact substitution $b_j=X_{n+j}/M$, $\tau=N_n^\alpha Mt$, after which every quantity is $O(1)$. The integration is by an explicit eighth-order Runge--Kutta method at relative tolerance $10^{-11}$.

\bibliography{bib}

\end{document}